\numberwithin{equation}{section}
\newtheorem{theorem}{Theorem}[section]
\newtheorem{lemma}[theorem]{Lemma}
\newtheorem{definition}[theorem]{Definition}
\newtheorem{proposition}[theorem]{Proposition}
\newtheorem{remark}[theorem]{Remark}
\newtheorem{corollary}[theorem]{Corollary}
\newcommand{ \mint }{ {\int\hspace{-0.38cm}-}}
\begin{document}
	
	\title[\hfil Sobolev regularity for the nonlocal $(1, p)$-Laplace equations] {Sobolev regularity for the nonlocal $(1, p)$-Laplace equations in the superquadratic case}
	
		\author[D. Li and C. Zhang  \hfil \hfilneg]
		{Dingding Li  and Chao Zhang$^*$}
	
	\thanks{$^*$ Corresponding author.}
	
	\address{Dingding Li \hfill\break School of Mathematics, Harbin Institute of Technology, Harbin 150001, China}
	\email{a87076322@163.com}

	\address{Chao Zhang  \hfill\break School of Mathematics and Institute for Advanced Study in Mathematics, Harbin Institute of Technology, Harbin 150001, China}
	\email{czhangmath@hit.edu.cn}

	\subjclass[2020]{35B65, 35D30, 35J60, 35R11}
	\keywords{Sobolev regularity; nonlocal $(1, p)$-Laplacian; nonhomogeneous growth; finite difference quotients}
	
	\maketitle
	
\begin{abstract}
We investigate the interior Sobolev regularity of weak solutions to the nonlocal $(1, p)$-Laplace equations in the superquadratic case $p\ge 2$. More precisely, for the fractional diﬀerentiability index $s_p\in \left(0, \frac{p-1}{p}\right]$, we establish that the weak solution $u$ exhibits $W_{\rm loc}^{\gamma, q}$-regularity for any $\gamma\in \left(0, \frac{s_p p}{p-1}\right)$ and $q\ge p$. When $s_p\in \left(\frac{p-1}{p}, 1\right)$, the gradient $\nabla u$ of the weak solution is shown to exist and belong to $\left( L^q_\mathrm{loc}(\Omega)\right)^N$. As a product, the explicit H\"{o}lder continuity estimates of weak solutions are derived. The proof relies on a detailed analysis of the structural characteristics of 
$(1, p)$-growth in the nonlocal setting, combined with the finite difference quotient method, tail estimates, refined energy estimates, and a Moser-type iteration scheme.
\end{abstract}

\section{Introduction}
Let $\Omega$ be a bounded domain in $\mathbb{R}^N$ with $N\ge2$. In this paper, we consider the following nonlocal $(1, p)$-Laplace equation 
\begin{align}
	\label{1.1}
		(-\Delta_1)^{s_1}u+(-\Delta_p)^{s_p}u=0\quad\text{in }\Omega,
\end{align}
where $s_1, s_p\in (0, 1)$, $p\ge 2$ and the fractional operators $(-\Delta_1)^{s_1}$ and $(-\Delta_p)^{s_p}$ are defined by
\begin{align*}
	(-\Delta_1)^{s_1}u(x):=2\mathrm{P.V.}\int_{\mathbb{R}^N}\frac{u(x)-u(y)}{|u(x)-u(y)|}\frac{dy}{|x-y|^{N+s_1}}
\end{align*}
and
\begin{align*}
	(-\Delta_p)^{s_p}u(x):=2\mathrm{P.V.}\int_{\mathbb{R}^N}\frac{|u(x)-u(y)|^{p-2}(u(x)-u(y))}{|x-y|^{N+s_p p}}\,dy
\end{align*}
for $x\in \Omega$. Here $\mathrm{P.V.}$ denotes the integral taken in the principal value sense. 

The equations of the type considered in \eqref{1.1} were recently proposed by G\'{o}rny, Maz\'{o}n and Toledo \cite{GMT24}
in a general setting of random walk spaces. These equations incorporate two random walk structures so that the associated functionals have diﬀerent growth on each structure, which can be regarded as a special case of  $(p, q)$-growth problems. Since the pioneering works \cite{M89, M91}, this class of problems has been extensively investigated. We refer the readers to \cite{MR21, C18, BM20, CMM24, M21} and the references therein for detailed overviews.

\subsection{Overview of related results.} Let us start with the problems with local $1$-growth. For the inhomogeneous $1$-Laplace Dirichlet problem
\begin{align}
	\label{0.1}
	\begin{cases}
		-\Delta_1 u:=-\mathrm{div}\left( \frac{\nabla u}{|\nabla u|}\right)=f & \textmd{in } \Omega,\\
		u=0& \textmd{on }  \partial\Omega,
	\end{cases}
\end{align}
it is well-known that the diffusion singularity of the 1-Laplacian $\Delta_1$ manifests prominently on the set $\left\lbrace Du=0\right\rbrace $, commonly referred to as a facet. To address this issue, a vector field $Z\in L^\infty(\Omega; \mathbb{R}^N)$ was  introduced by using the Anzellotti-Frid-Chen pairing theory (see \cite{A83, CF99, CF01, CF03}) that serves as a generalized representative of the quotient $\frac{Du}{|Du|}$ imprecisely. We refer to a series of works by Maz\'{o}n and collaborators \cite{MMT23, HM22, MRS14, MRS15} for more comprehensive treatments of this approach. Moreover, problem \eqref{0.1} is usually addressed as the limit version for $p$-Laplace equations as $p$ goes to $1^+$. The behavior of solutions to \eqref{0.1} has been rigorously analyzed under various assumptions on the right-hand side term $f$, see \cite{CT03, K90, MST08}.  In these works, a smallness condition on $f$ is typically imposed to ensure that the solution remains finite almost everywhere in $\Omega$. However, within the $L^1$ framework, one cannot generally expect that the solutions are finite almost everywhere in $\Omega$.  In this context, Mercaldo, Segura de Le\'{o}n and Trombetti \cite{MST09} established the existence of renormalized solutions. 

Regarding the regularity of classical 1-growth problems, as far as we know, since the ellipticity of 1-Laplacian becomes singular over the facet $\left\lbrace Du=0\right\rbrace$, the solutions of $-\Delta_1 u=f$ may fail to be continuously differentiable. Nevertheless, the results on problems with nearly linear growth stand in sharp contrast to those in the 1-growth regime. In particular, De Filippis and Mingione \cite{DM23} established the Schauder-type estimates for minimizers of the functional:
\begin{align*}
	\mathcal{L}(w, \Omega):=\int_{\Omega}\left[c(x)|Dw|\log(1+|Dw|)+a(x)|Dw|^q\right]\,dx, \quad q>1.
\end{align*}
Under suitable conditions on the coefficients $a(x)$ and $c(x)$, these results yield $C^{1, \alpha}$-regularity for the corresponding minimizers. We also refer to \cite{DDP24, DP24, FS98, FS99, EM00, MS99} for more results on the problems with nearly linear growth.

When it comes to the nonlocal counterpart
\begin{align*}
	(-\Delta_1)^{s} u=0,
\end{align*}
there remain relatively few results addressing problems with 1-growth.  It is worth noting that Maz\'{o}n et al. have made a significant contribution to this field. In their seminal works \cite{MRT16, MPRT16},  
they not only introduced a rigorous definition of weak solutions for nonlocal evolution problems with 1-growth but also characterized the new qualitative phenomena inherent in these problems. In contrast to the local case, where the expression $\frac{Du}{|Du|}$ arises naturally, the nonlocal counterpart takes the form $\frac{u(x)-u(y)}{|u(x)-u(y)|}$. To properly interpret this quantity, one must identify a function $Z$ representing the nonlocal quotient. In this direction, Andreu et al. \cite{AMRT08, AMRT09} demonstrated that for a class of nonlocal diffusion equations, the corresponding quantity satisfies $Z(x, y, t)\in\mathrm{sgn}(u(x,t)-u(y,t))$, where $\mathrm{sgn}(\cdot)$ denotes the set-valued sign function with $\mathrm{sgn}(0)=[-1, 1]$ (see also \cite{MRT19}). 
Bucur et al. \cite{BDLM23} investigated the existence of $(s, 1)$-harmonic function and established their equivalence to the minimizers of functional
\begin{align*}
	\int_{\mathbb{R}^N}\int_{\mathbb{R}^N}\frac{|u(x)-u(y)|}{|x-y|^{N+s}}\,dxdy.
\end{align*}
Assuming the inhomogeneous term $f$ belongs to $L^\frac{N}{s}(\Omega)$ with a smallness assumption, Bucur \cite{B23} studied the minimizer of $\mathcal{F}^{s_p}_p$ given by
\begin{align*}
	\mathcal{F}^{s_p}_p(u):=\frac{1}{2p}\int_{\mathbb{R}^N}\int_{\mathbb{R}^N}\frac{|u(x)-u(y)|^p}{|x-y|^{N+s_pp}}\,dxdy-\int_{\Omega}fu\,dx,
\end{align*}
where $p\ge1$ and $s_p=N+s-\frac{N}{p}\in[s,1)$. And it is shown that the minimizers correspond to weak solutions of the nonlocal 1-Laplacian problem and a ``flatness" result was obtained, which implies that the expression of $Z$ towards $\frac{u(x)-u(y)}{|u(x)-u(y)|}$ may not precise.

Under certain natural structural conditions, Bucur et al. \cite{BDLV25} proved that any $s$-minimal function $u$ lies in $C(\Omega)\cap L^\infty(\Omega)$ and admits a continuous extension $\overline{u}\in C(\overline{\Omega})$. For further regularity results, we refer to the work of Novaga and Onoue \cite{NO23}, who investigated the minimizers of a nonlocal variational problem arising from an image denoising model:
\begin{align*}
	\text{min}\left\lbrace \mathcal{F}_{K,f}(u)\big|u\in BV_K(\mathbb{R}^N)\cap L^2(\mathbb{R}^N)\right\rbrace ,
\end{align*}
where $\mathcal{F}_{K,f}$ is defined by
\begin{align*}
	\mathcal{F}_{K,f}(u):=\frac{1}{2}\int_{\mathbb{R}^N\times\mathbb{R}^N}K(|x-y|)|u(x)-u(y)|dxdy+\frac{1}{2}\int_{\mathbb{R}^N}\left( u(x)-f(x)\right)^2dx 
\end{align*}
and the space $BV_K(\mathbb{R}^N)$ is given by
\begin{align*}
	BV_K(\mathbb{R}^N):=\left\lbrace u\in L^1(\mathbb{R}^N)\bigg|\int_{\mathbb{R}^N\times\mathbb{R}^N}K(|x-y|)|u(x)-u(y)|dxdy<+\infty\right\rbrace 
\end{align*}
with $K$ being a kernel singular at the origin. In two dimensions, under the assumption that the original image $f$ is locally $\beta$-H\"{o}lder continuous with $\beta\in (1-s,1]$, the authors showed that the minimizers preserve the same local H\"{o}lder regularity.

Finally, we would like to turn to the perturbed 1-Laplace equation of the form
\begin{align*}
	-\mathrm{div}\left( \frac{\nabla u}{|\nabla u|}\right)-\mathrm{div}\left( |\nabla u|^{p-2}\nabla u \right) =f\quad\text{in }\Omega.
\end{align*}
For the cases $p=2$ and $p=3$, this equation arises in fluid mechanics \cite{DL76} and materials science \cite{S93} respectively. Such problems have been investigated only in a limited number of specific settings. 
%The first pioneering results were established by Marcellini and Miller in \cite{MM97}. 
The anisotropic case involving the $p$-Laplacian when it has linear growth on some of the coordinates has also been studied, for instance, in \cite{G23, MRST10}. Employing De Giorgi-Nash-Moser theory, Tsubouchi \cite{T21} proved the local Lipschitz continuity of weak solutions. Under the additional assumption of convexity for weak solutions, Giga and Tsubouchi \cite{GT22} showed that the solutions are actually of $C^1$-regular. Subsequently, Tsubouchi \cite{T24} removed the convexity assumption and achieved $C^1_\mathrm{loc}$-regularity by employing De Giorgi's truncation method and the freezing coefficients technique. These methods were further extended to prove regularity results for parabolic $(1, p)$-Laplace system in \cite{T25, T24A, T25A}.

\subsection{Main results.} Inspired by the aforementioned literature, we aim to investigate the regularity of weak solutions to \eqref{1.1}.  Although \cite{GMT24} has formulated a precise notion of weak solutions and systematically analyzed several critical phenomena associated with nonlocal $(1, p)$-growth evolution problems, the fundamental regularity of this model was still unknown. To this end, thanks to the development of the finite difference quotient technique within the fractional framework (see \cite{BL17, BLS18, BDL2401, BDL2402, FLZ25, DKLN23, GL24}), we establish the local Sobolev regularity of weak solutions to \eqref{1.1}.

Different from the standard nonlocal $p$-growth case, the tail estimates developed in \cite{BDL2402} fail to yield a suitable energy inequality in our setting, primarily due to the lack of regularity information on the function $Z$. As a result, when dealing with the energy inequality, we have to control the local part of the $p$-growth using the $1$-growth. This leads us to distinguish between two regimes for the parameter $s_p$, namely $s_p\in \left( 0,\frac{p-1}{p}\right] $ and $s_p\in \left( \frac{p-1}{p}, 1\right)$, which are consistent with the regularity findings in \cite{DKP16, BL17}. Additionally, the local boundedness will be established using techniques developed in \cite{DFZ24, HS14, C17, DM24}.

\smallskip
Before presenting the main results, we give the definition of weak solutions to \eqref{1.1}.

\begin{definition}
	\label{def1}
A function $u\in W^{s_1,1}_{\mathrm{loc}}(\Omega)\cap W^{s_p,p}_{\mathrm{loc}}(\Omega)\cap L^{p-1}_{s_pp}(\mathbb{R}^N)$ is said to be a weak solution to problem \eqref{1.1} if
	\begin{itemize}
		\item [(i)] there exists a function $Z\in L^\infty(\mathbb{R}^N\times\mathbb{R}^N)$ satisfying $Z\in \text{sgn}(u(x)-u(y))$;
		\item [(ii)] for any $\varphi\in W^{s_1,1}_{0}(\Omega)\cap W^{s_p,p}_{0}(\Omega)$, there holds
		\begin{align}
			\label{1.2}
			0&=\int_{\mathbb{R}^N}\int_{\mathbb{R}^N}Z\frac{\varphi(x)-\varphi(y)}{|x-y|^{N+s_1}}\,dxdy\nonumber\\
			&\quad+\int_{\mathbb{R}^N}\int_{\mathbb{R}^N}\frac{|u(x)-u(y)|^{p-2}(u(x)-u(y))(\varphi(x)-\varphi(y))}{|x-y|^{N+s_pp}}\,dxdy.
		\end{align}
	\end{itemize}
\end{definition}

\begin{remark}
	\label{rem0}
	To ensure the existence of weak solutions, a decay assumption is required for the $p$-growth term, i.e.,
	\begin{align*}
		u\in L^{p-1}_{s_pp}(\mathbb{R}^N).
	\end{align*}
	However, due to the integrability property
	\begin{align}
		\label{assume1}
		\int_{\mathbb{R}^N\backslash B_R(x_0)}\frac{dy}{|x-x_0|^{N+s_1}}<+\infty,
	\end{align}
	no decay assumption is needed for the 1-growth term. The existence of weak solutions to problem \eqref{1.1} can be established via the direct method used in \cite{B23}.
\end{remark}

Define
\begin{align*}
	\mathcal{T}:=\|u\|_{L^\infty(B_{R}(x_0))}+\mathrm{Tail}(u;x_0,R),
\end{align*}
where $\operatorname{Tail}(u; x_0, R)$ denotes the tail term associated with the function $u$ at the point $x_0$ and radius $R$ (see Section \ref{sec2} for details). We are now ready to state the main results of this work. 

\begin{theorem}
	\label{th12}
	Let $p\ge2$, $s_p\in \left( 0,\frac{p-1}{p}\right]$ and let $u$ be a locally bounded weak solution to problem \eqref{1.1} in the sense of Definition \ref{def1}, we have
	\begin{align*}
		u\in W^{\gamma,q}_\mathrm{loc}(\Omega)
	\end{align*}
	for any $q\ge p$ and $\gamma\in \left( 0,\frac{s_pp}{p-1}\right) $. Moreover, there exist two constants $C$ and $\kappa$ depending on $N, p, q, s_1, s_p, \gamma$ such that for any ball $B_R\equiv B_R(x_0)\subset\subset\Omega$ with $R\in (0,1)$ and any $r\in (0,R)$,
	\begin{align*}
		[u]^q_{W^{\gamma,q}(B_r)}\le \frac{C\left( \mathcal{T}+[u]_{W^{s_p,p}(B_R)}+1\right)^q}{(R-r)^\kappa}.
	\end{align*}
\end{theorem}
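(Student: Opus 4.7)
The plan is to run a finite-difference-quotient Moser iteration in the spirit of \cite{BL17, BLS18, BDL2402}, modified so as to accommodate the non-differentiable $1$-growth term. Fix nested radii $r<\rho<R$, a cut-off $\eta\in C_c^\infty(B_\rho)$ with $\eta\equiv 1$ on $B_r$, and for a small increment $h\in\mathbb{R}^N$ with $|h|<(R-\rho)/4$, set $u_h(x):=u(x+h)$. By translation invariance of \eqref{1.1}, $u_h$ is again a weak solution, associated to the translated vector field $Z_h(x,y):=Z(x+h,y+h)$. Subtracting the two weak formulations and testing the resulting identity with $\varphi:=\eta^\alpha J_\beta(\tau_h u)$, where $J_\beta(t):=|t|^{\beta-1}t$ and $\alpha,\beta\ge 1$ are to be chosen along the iteration, produces a single discrete-derivative equation to be analysed.

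For the $p$-growth contribution, the standard monotonicity inequality
\begin{equation*}
\bigl(|a|^{p-2}a-|b|^{p-2}b\bigr)\,J_\beta(a-b)\;\ge\;c\,|a-b|^{p+\beta-1},
\end{equation*}
combined with the classical splitting into a local core and a tail (using $\mathrm{Tail}(u;x_0,R)$), produces a positive lower bound involving $\left[\eta^{\alpha/2}G_\beta(\tau_h u)\right]^2_{W^{s_p,2}(B_\rho)}$ with $G_\beta(t)\sim |t|^{(p+\beta-1)/2}$, up to cut-off errors controlled by $(R-r)^{-\kappa}$, by $[u]_{W^{s_p,p}(B_R)}^p$ and by $\mathcal{T}^{p-1}$; this block is essentially transferred from \cite{BLS18, BDL2402}.

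The genuinely new element is the $1$-growth term, for which the lack of regularity of $Z$ forbids any discrete integration by parts. We instead use only the pointwise bound $|Z_h-Z|\le 2$ and H\"older's inequality to obtain
\begin{equation*}
\left|\int_{\mathbb{R}^N}\int_{\mathbb{R}^N}(Z_h-Z)(x,y)\,\frac{\varphi(x)-\varphi(y)}{|x-y|^{N+s_1}}\,dxdy\right|\;\le\;2\,\bigl[\eta^\alpha J_\beta(\tau_h u)\bigr]_{W^{s_1,1}(\mathbb{R}^N)}.
\end{equation*}
Since $u$ is locally bounded, $|J_\beta(\tau_h u)|\le C(\mathcal{T})$, and a Leibniz-type splitting of $\varphi(x)-\varphi(y)$ into a piece involving $\eta(x)^\alpha-\eta(y)^\alpha$ (absorbed into cut-off errors of order $(R-r)^{-\kappa}$) and a piece involving $J_\beta(\tau_h u(x))-J_\beta(\tau_h u(y))$ (controlled by $[u]_{W^{s_p,p}(B_R)}$ via the local continuous inclusion of $W^{s_p,p}$ into $W^{s_1,1}$ in the present parameter range, together with $|\tau_h u|\le C\mathcal{T}$) yields a bound of the form $C(\mathcal{T})\bigl(1+[u]_{W^{s_p,p}(B_R)}\bigr)(R-r)^{-\kappa}$.

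Combining the two estimates, dividing by the appropriate power of $|h|$, and taking the supremum over $|h|<h_0$ converts the resulting inequality into a Nikolskii-type energy bound on $\eta^{\alpha/2}G_\beta(\tau_h u)$, which by the Besov--Nikolskii embedding delivers a first regularity gain $u\in W^{\gamma_1,q_1}_{\mathrm{loc}}$ with $\gamma_1>s_p$ and $q_1\ge p$. Iterating the scheme Moser-style by incrementing $\beta$, the integrability $q_k\to\infty$ while the differentiability $\gamma_k$ approaches the critical value $s_p p/(p-1)$; a standard shrinking-radii bookkeeping then produces the stated quantitative estimate for every $q\ge p$ and every $\gamma<s_p p/(p-1)$. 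The main obstacle is precisely the $1$-growth term: controlling it as a mere perturbation requires the restriction $s_p\le(p-1)/p$ (equivalently $s_p p/(p-1)\le 1$), which is exactly the threshold at which the $p$-gain dominates the $W^{s_1,1}$-cost at every step of the iteration; the complementary regime $s_p>(p-1)/p$, which produces genuine gradients, must be handled by a separate argument treated in the companion theorem.
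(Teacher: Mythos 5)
Your overall scheme—discrete differentiation, testing with $\eta^\alpha J_\beta(\tau_h u)$, extracting a coercive seminorm from the $p$-growth part via algebraic inequalities, a tail estimate, Nikolskii improvement and shrinking-radii bookkeeping—is the paper's (cf.\ Proposition~\ref{pro9} with $\varphi=J_{q-p+2}(\tau_hu)\eta^p$, Lemmas~\ref{lem2}, \ref{lem3}, \ref{lem8}). However, your treatment of the $1$-growth term has a genuine gap. You bound the whole $1$-growth contribution by $2[\varphi]_{W^{s_1,1}}$ and then estimate that by $C(\mathcal{T})\left(1+[u]_{W^{s_p,p}(B_R)}\right)(R-r)^{-\kappa}$, invoking only $|\tau_hu|\le C\mathcal{T}$ and the inclusion $W^{s_p,p}\hookrightarrow W^{s_1,1}$. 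That bound may well hold, but it carries \emph{no positive power of $|h|$}: the factor $[J_\beta(\tau_hu)]_{W^{s_1,1}}$ does not vanish as $h\to 0$ under those two ingredients alone. The resulting energy inequality then reads, schematically, (coercive term) $\le$ (cut-off errors) $+\ O(1)$, with the last term independent of $h$; dividing by any power of $|h|$ only makes it blow up, and no Nikolskii bound $\|\tau_h(\tau_hu)\|^q_{L^q(B_r)}\le M^q|h|^{\gamma q}$ with $\gamma>0$ follows, so the iteration never leaves the starting regularity.

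The paper sidesteps this by a \emph{sign argument}, not a size bound. In the proof of Proposition~\ref{pro9} the $1$-growth term is split as $I=I_{1,1}+I_{1,2}+2I_2$; the dangerous core $I_{1,1}$, which couples $Z_h-Z$ with $J_{q-p+2}(\tau_hu)(x)-J_{q-p+2}(\tau_hu)(y)$, is shown to be \emph{non-negative}. This is a discrete monotonicity fact: since $Z\in\mathrm{sgn}(u(x)-u(y))$, wherever $Z_h-Z\ne 0$ the quantities $Z_h-Z$ and $\tau_hu(x)-\tau_hu(y)$ have the same sign, and hence so do $Z_h-Z$ and $J_{q-p+2}(\tau_hu)(x)-J_{q-p+2}(\tau_hu)(y)$. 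Thus $I_{1,1}$ drops from the estimate entirely, and only the cut-off error $I_{1,2}$ and the tail $I_2$ survive; both reduce to $\int_{B_R}|\tau_hu|^{q-p+1}\,dx$, which scales as $|h|^{\sigma(q-p+1)}$ via Lemma~\ref{lem5}. That is precisely the exponent $s_pp+\sigma(q-p+1)$ that launches the iteration (see the paragraph preceding Proposition~\ref{pro9}); without noticing the sign structure of $I_{1,1}$, this cancellation is invisible. As a secondary point, a single Moser increment of $\beta$ does not simultaneously give $q_k\to\infty$ and $\gamma_k\to\frac{s_pp}{p-1}$: the paper iterates differentiability at fixed $q$ (Lemma~\ref{lem11}) and then runs a separate outer loop over $q_i=\alpha^ip$, which is needed when $p>3$ and $s_p\in\left(\frac{2}{p},\frac{p-1}{p}\right]$ to keep the starting differentiability above the threshold $\mu^{s_p}_p=\frac{s_pp-2}{p-2}$.
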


\begin{theorem}
	\label{th26}
	Let $p\ge2$, $s_p\in \left( \frac{p-1}{p}, 1\right) $ and let $u$ be a locally bounded weak solution to problem \eqref{1.1} in the sense of Definition \ref{def1}, we have
	\begin{align*}
		u\in W^{1,q}_\mathrm{loc}(\Omega)
	\end{align*}
	for any $q\ge p$. Moreover, there exist two constants $C$ and $\kappa$ depending on $N, p, q, s_1, s_p, \gamma$ such that for any ball $B_R\equiv B_R(x_0)\subset\subset\Omega$ with $R\in(0,1)$ and any $r\in(0,R)$,
	\begin{align*}
		\|\nabla u\|^q_{L^q(B_r)}\le \frac{C\left( \mathcal{T}+[u]_{W^{s_p,p}(B_R)}+1\right)^q }{(R-r)^{\kappa}}.
	\end{align*}
\end{theorem}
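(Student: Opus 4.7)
My plan is to mirror the finite-difference approach of Theorem \ref{th12} but to push the Moser iteration past the Sobolev threshold by exploiting the surplus smoothness coming from $s_p > \frac{p-1}{p}$, which gives $\frac{s_p p}{p-1} > 1$. Concretely, I will derive discrete Caccioppoli estimates of the form
\[
\|\tau_h u\|_{L^q(B_r)} \le C\, |h|^{\gamma}\, \bigl(\mathcal{T} + [u]_{W^{s_p,p}(B_R)} + 1\bigr), \qquad 0<|h|\ll R-r,
\]
valid for every $q \ge p$ and every $\gamma < 1$; a second application of the same scheme, taking the improved smoothness as input, will upgrade $\gamma$ all the way to $1$. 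Since $q \in (1,\infty)$, the Nikol'ski\u{\i}/Riesz characterization
\[
u \in W^{1,q}(B_r) \iff \sup_{|h|>0} |h|^{-1} \|\tau_h u\|_{L^q(B_r)} < \infty
\]
then yields the weak gradient in $L^q$ with the advertised bound.

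\textbf{Discrete energy inequality.} Following the framework of \cite{BLS18,BDL2402}, I would test \eqref{1.2} with $\varphi(x) = \tau_{-h}\bigl(\eta^b(x)\, J_\beta(\tau_h u(x))\bigr)$, where $J_\beta(t) = |t|^{\beta-1}t$, $\beta \ge 1$, $\eta$ is a cutoff between $B_r$ and $B_R$, and $b$ is sufficiently large. After shifting the $p$-growth term by $h$ and symmetrizing, the monotonicity of $t \mapsto |t|^{p-2}t$ combined with a discrete Leibniz rule produces a bulk term comparable to a weighted Gagliardo seminorm of a power of $\tau_h u$, which controls the iterative left-hand side, plus error terms involving $\|\nabla\eta\|_\infty$ on the intermediate annulus and the tail quantity $\mathrm{Tail}(u;x_0,R)$. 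The $1$-growth part contributes
\[
\int\!\!\int \bigl[Z(x+h,y+h) - Z(x,y)\bigr]\bigl[\eta^b J_\beta(\tau_h u)(x) - \eta^b J_\beta(\tau_h u)(y)\bigr] \frac{dx\,dy}{|x-y|^{N+s_1}},
\]
which cannot be handled by differentiating $Z$. Instead, since $Z \in \mathrm{sgn}(u(\cdot)-u(\cdot))$ and $J_\beta$ is monotone, a pointwise sign analysis yields a non-negative contribution wherever $\tau_h u(x) - \tau_h u(y)$ shares sign with $u(x+h) - u(y+h) - u(x) + u(y)$; the remaining ``wrong-sign'' portion is absorbed using $\|Z\|_\infty \le 1$ and Young's inequality, at the cost of a factor $|h|^{s_1}$ times a small fraction of the $p$-Caccioppoli term. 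This is precisely where $s_p > \frac{p-1}{p}$ enters: only in this range does the $p$-energy carry enough margin to absorb the $1$-growth residual while still delivering a strictly positive gain per iteration.

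\textbf{Moser scheme and gradient.} Dividing by $|h|^{s_p p + \epsilon}$, integrating in $h$ over a small ball, and applying the fractional Sobolev embedding $W^{s,q} \hookrightarrow L^{Nq/(N-sq)}$ boosts the integrability exponent $p+\beta-1$ to a larger one. Choosing $\beta_k$ geometrically in a Moser sequence produces, after finitely many steps, an estimate at any $q \ge p$ with gain $|h|^{\gamma}$ for any $\gamma < 1$. Feeding this improved smoothness back into the test-function machinery and rerunning the scheme converts the gain to $|h|^{1}$, because the surplus $\frac{s_p p}{p-1} - 1 > 0$ persists through each iteration. Applying the Riesz characterization recalled above then yields $\nabla u \in (L^q_{\mathrm{loc}}(\Omega))^N$ together with the stated quantitative bound. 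The chief obstacle, as already in the proof of Theorem \ref{th12}, lies in the handling of the $1$-growth term: the absence of any regularity of $Z$ forces every bound to come from the algebraic monotonicity of $\mathrm{sgn}$, and to propagate through the Moser loop without consuming the crucial excess from $s_p > \frac{p-1}{p}$; controlling this absorption step quantitatively across all scales of $\beta$ will be the most delicate book-keeping in the argument.
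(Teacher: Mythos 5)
Your overall strategy is correct in outline and matches the paper: finite differences with test function $\varphi = \tau_{-h}(\eta^p J_\beta(\tau_h u))$, a pointwise sign analysis to kill the $1$-growth part, a Moser-type iteration, and the Nikol'ski\u{\i}/Riesz characterization $\sup_{|h|} |h|^{-1}\|\tau_h u\|_{L^q} < \infty \Leftrightarrow u\in W^{1,q}$ (the paper's Lemma \ref{lem14}). However, three steps in your sketch are either wrong or so vague that they hide the actual mechanism.

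\textbf{The $1$-growth term.} You claim the sign analysis produces a ``wrong-sign portion'' that must be absorbed by Young's inequality into the $p$-energy at the cost of $|h|^{s_1}$. This does not happen, and if it did the argument would be much harder. In the paper's Proposition \ref{pro9}, the product $(Z_h - Z)\bigl[J_{q-p+2}(\tau_h u)(x) - J_{q-p+2}(\tau_h u)(y)\bigr]$ is shown to be \emph{everywhere non-negative} by a case analysis on the signs of $u_h(x)-u_h(y)$ and $u(x)-u(y)$ (when both signs agree, $Z_h - Z = 0$; when they disagree, the inequality $\tau_h u(x) \lessgtr \tau_h u(y)$ forces the two factors to have the same sign). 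Thus the entire symmetrized $1$-growth bulk term $I_{1,1}$ is simply dropped. Only the error terms coming from the cutoff ($I_{1,2}$) and from the exterior integral ($I_2$) need $\|Z\|_\infty \le 1$, and they yield $(R-r)^{-1}\int |\tau_h u|^{q-p+1}$ and $s_1^{-1}(R-r)^{-s_1}\int |\tau_h u|^{q-p+1}$, respectively --- not $|h|^{s_1}$ times a Caccioppoli term. The factor $|h|^\sigma$ enters through the \emph{differentiability of $u$}, via $\int |\tau_h u|^q \lesssim |h|^{\sigma q} [u]^q_{W^{\sigma,q}}$, and these $1$-growth residuals are exactly what cut the gain from $s_p p + \sigma(q-p+2)$ to $s_p p + \sigma(q-p+1)$.

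\textbf{Role of $s_p > \frac{p-1}{p}$.} You attribute this threshold to the $p$-energy being able to ``absorb the $1$-growth residual.'' In fact the energy estimate is identical in both ranges. The condition enters through the iteration $\sigma_{i+1}\approx \frac{\sigma_i}{p} + s_p$ (Lemma \ref{lem15}), whose fixed point $\frac{s_p p}{p-1}$ exceeds the critical value $p - s_p p$ precisely when $s_p > \frac{p-1}{p}$; once $\sigma > p - s_p p$, the double-difference exponent $s_p p + \sigma$ exceeds $p$.

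\textbf{Passage from $|h|^\gamma$, $\gamma<1$, to $|h|$.} Your ``feeding this improved smoothness back into the test-function machinery and rerunning the scheme converts the gain to $|h|^1$'' glosses over the essential step. A bound $\|\tau_h u\|_{L^q} \lesssim |h|^\gamma$ for every $\gamma < 1$ does \emph{not} imply $\|\tau_h u\|_{L^q} \lesssim |h|$; the constants blow up as $\gamma \to 1$. The actual mechanism is: the energy estimate controls the \emph{second difference} $\int |\tau_h(\tau_h u)|^p \lesssim |h|^{s_p p + \sigma}$, and once $s_p p + \sigma > p$ the paper's Lemma \ref{lem6} (the case of exponent $>1$) directly produces the first-difference bound $\int |\tau_h u|^p \lesssim |h|^p$, whence $\nabla u \in L^p_{\mathrm{loc}}$ via Lemma \ref{lem14}. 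Your sketch never passes through second-order differences explicitly, so this crucial supercritical case of the difference-quotient lemma is missing. After $W^{1,p}_{\mathrm{loc}}$ is established, the paper iterates the \emph{integrability} separately (Corollary \ref{pro19}, Lemma \ref{lem22}, Lemma \ref{lem24}, Lemma \ref{lem25}), bootstrapping $\nabla u \in L^{q_i}$ through the fractional Sobolev embedding applied to $\nabla u \in W^{\alpha,q_i}$ with $\alpha < \frac{s_p p - (p-1)}{q_i}$; this is the genuine Moser-type step, and it is conceptually distinct from the differentiability iteration.

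In summary: the strategy is sound, but the sign analysis should yield an entirely non-negative bulk term (no absorption), the second-order difference quotient and the supercritical case of Lemma \ref{lem6} are essential and omitted, and the two iterations (differentiability up to $W^{1,p}$; integrability from $p$ to $q$) need to be run in sequence rather than as a single Moser loop.
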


By applying the Morrey-type embedding for fractional Sobolev spaces, Theorems \ref{th12} and \ref{th26} directly yield the following local H\"{o}lder regularity of weak solutions.

\begin{corollary}
	\label{cor13}
	Let $p\ge2$ and $s_p\in\left( 0,\frac{p-1}{p}\right] $. Then, for any locally bounded weak solution to problem \eqref{1.1} in the sense of Definition \ref{def1}, we have
	\begin{align*}
		u\in C^{0, \gamma}_\mathrm{loc}(\Omega)
	\end{align*}
	for any $\gamma\in \left( 0,\frac{s_p p}{p-1}\right)$. Moreover, there exist two constants $C$ and $\kappa$ depending on $N, p, s_1, s_p, \gamma$, such that for any ball $B_R\equiv B_R(x_0)\subset\subset\Omega$ with $R\in (0,1)$ and any $r\in(0,R)$,
	\begin{align*}
		[u]_{C^{0,\gamma}(B_r)}\le \frac{C\left( \mathcal{T}+[u]_{W^{s_p,p}(B_R)}+1\right) }{(R-r)^\kappa}.
	\end{align*}
\end{corollary}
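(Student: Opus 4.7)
The plan is to obtain the Hölder estimate as a direct packaging of Theorem \ref{th12} combined with the fractional Morrey embedding: whenever $\sigma q > N$, one has the quantitative seminorm inequality $[v]_{C^{0,\sigma-N/q}(B_r)} \le C(N,\sigma,q)\,[v]_{W^{\sigma,q}(B_r)}$. Since the corollary's hypothesis $s_p \in (0,(p-1)/p]$ lies exactly in the range of Theorem \ref{th12}, the only substantive task is to select the differentiability index $\gamma'$ and the integrability exponent $q$ so that the embedding delivers precisely the target Hölder exponent $\gamma$.

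Given $\gamma \in (0, s_p p/(p-1))$, I would first fix an intermediate index $\gamma'$ satisfying $\gamma < \gamma' < s_p p/(p-1)$, and then pick $q \ge \max\{p,\, N/(\gamma'-\gamma)\}$. This choice simultaneously guarantees the Morrey threshold $\gamma' q > N$ and the exponent inequality $\gamma' - N/q \ge \gamma$, while keeping both $\gamma'$ and $q$ determined by $N, p, s_1, s_p, \gamma$ alone, so the resulting constants inherit the correct dependence required by the statement.

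Invoking Theorem \ref{th12} with this $\gamma'$ and $q$, and taking $q$-th roots in the estimate there, yields
\[
[u]_{W^{\gamma',q}(B_r)} \le \frac{C\,(\mathcal{T}+[u]_{W^{s_p,p}(B_R)}+1)}{(R-r)^{\kappa/q}}.
\]
Composing with the fractional Morrey embedding and the trivial inclusion $C^{0,\gamma'-N/q}(B_r) \subset C^{0,\gamma}(B_r)$ then gives
\[
[u]_{C^{0,\gamma}(B_r)} \le C\,[u]_{W^{\gamma',q}(B_r)} \le \frac{C\,(\mathcal{T}+[u]_{W^{s_p,p}(B_R)}+1)}{(R-r)^{\kappa/q}},
\]
which, after relabeling $\kappa/q$ as the constant $\kappa$ appearing in the corollary, is exactly the claimed inequality.

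No substantial obstacle arises here, since the result is a direct consequence of Theorem \ref{th12} via an embedding argument. The only care required is the bookkeeping that ensures $\gamma < \gamma' < s_p p/(p-1)$ is compatible with $q$ being taken as large as the Morrey threshold demands; as $\gamma' - \gamma$ can be kept strictly positive, this is never tight. For completeness, the companion regime $s_p \in ((p-1)/p, 1)$ (not covered by the stated corollary) is handled in the same spirit by combining Theorem \ref{th26} with the classical Morrey embedding $W^{1,q}(B_r) \hookrightarrow C^{0,1-N/q}(B_r)$ for large $q$, producing Hölder regularity for every exponent less than $1$.
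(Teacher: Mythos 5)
Your argument is correct and follows essentially the same route as the paper: both proofs pick an intermediate differentiability index strictly between $\gamma$ and $\frac{s_p p}{p-1}$, choose $q$ large enough to cross the Morrey threshold, and combine Theorem \ref{th12} with the fractional Morrey embedding $W^{\tilde\gamma,q}\hookrightarrow C^{0,\tilde\gamma-N/q}$. The only cosmetic difference is that the paper fixes $\tilde\gamma=\frac12\left(\gamma+\frac{s_p p}{p-1}\right)$ and $q=\frac{2N}{\frac{s_p p}{p-1}-\gamma}$ so that the Morrey exponent equals $\gamma$ exactly, whereas you choose $q\ge\max\{p,N/(\gamma'-\gamma)\}$ and finish with the trivial inclusion of H\"older spaces; your choice has the minor advantage of explicitly enforcing $q\ge p$, a constraint the paper's stated choice of $q$ does not visibly check.
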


\begin{corollary}
	\label{cor27}
	Let $p\ge2$ and $s_p\in \left( \frac{p-1}{p},1\right) $.  Then, for any locally bounded weak solution to problem \eqref{1.1} in the sense of Definition \ref{def1}, we have
	\begin{align*}
		u\in C^{0,\gamma}_\mathrm{loc}(\Omega)
	\end{align*}
	for any $\gamma\in(0,1)$. Moreover, there exist two constants $C$ and $\kappa$ depending on $N, p, s_1, s_p, \gamma$, such that for any ball $B_R\equiv B_R(x_0)\subset\subset\Omega$ with $R\in(0,1)$ and any $r\in(0,R)$,
	\begin{align*}
		[u]_{C^{0,\gamma}(B_r)}\le \frac{C\left( \mathcal{T}+[u]_{W^{s_p,p}(B_R)}+1\right) }{(R-r)^{\kappa}}.
	\end{align*}
\end{corollary}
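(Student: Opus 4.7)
The plan is to obtain Corollary \ref{cor27} as a direct consequence of Theorem \ref{th26} combined with the classical Morrey embedding. Theorem \ref{th26} already delivers $u \in W^{1,q}_{\mathrm{loc}}(\Omega)$ for every $q \ge p$ together with a quantitative bound on $\|\nabla u\|_{L^q(B_r)}$; since Morrey's inequality converts $L^q$-integrability of the gradient into H\"older continuity of exponent $1 - N/q$ whenever $q > N$, and this exponent tends to $1$ as $q \to \infty$, any prescribed target $\gamma \in (0,1)$ can be realized by choosing $q$ sufficiently large.

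To set up the argument, I fix $\gamma \in (0,1)$ and select $q = q(N,\gamma) \ge \max\{p, N/(1-\gamma)\}$, so that $1 - N/q > \gamma$. Applying Theorem \ref{th26} to this $q$ on the pair of balls $B_r \subset B_R \equiv B_R(x_0) \subset\subset \Omega$ with $R \in (0,1)$ yields
\begin{align*}
\|\nabla u\|^q_{L^q(B_r)} \le \frac{C\bigl(\mathcal{T} + [u]_{W^{s_p,p}(B_R)} + 1\bigr)^q}{(R-r)^{\kappa_1}},
\end{align*}
for some $\kappa_1 = \kappa_1(N,p,q,s_1,s_p)$. Taking $q$-th roots produces
\begin{align*}
\|\nabla u\|_{L^q(B_r)} \le \frac{C\bigl(\mathcal{T} + [u]_{W^{s_p,p}(B_R)} + 1\bigr)}{(R-r)^{\kappa_1/q}}.
\end{align*}

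Next I invoke Morrey's inequality on the ball $B_r$: since $q > N$,
\begin{align*}
[u]_{C^{0,1-N/q}(B_r)} \le C(N,q)\,\|\nabla u\|_{L^q(B_r)}.
\end{align*}
Because $1 - N/q > \gamma$ and $B_r$ has diameter at most $2R \le 2$, the trivial inclusion $C^{0,1-N/q}(B_r) \hookrightarrow C^{0,\gamma}(B_r)$ is continuous with constant depending only on $\gamma$. Chaining the last three bounds and setting $\kappa := \kappa_1/q$ produces exactly the estimate asserted in Corollary \ref{cor27}, with $\kappa$ ultimately depending on $N, p, s_1, s_p, \gamma$ through the admissible choice of $q$.

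There is no genuine obstacle, since all of the analytical work has been carried out in Theorem \ref{th26}. The only accounting point worth flagging is that Corollary \ref{cor27} displays a \emph{linear} (rather than $q$-th power) dependence on $\mathcal{T} + [u]_{W^{s_p,p}(B_R)} + 1$; this linearity is obtained automatically once the $q$-th root of Theorem \ref{th26}'s bound is taken, at the cost of dividing the exponent $\kappa_1$ by $q$, which is harmless because the final $\kappa$ is allowed to depend on $\gamma$ (hence on $q$).
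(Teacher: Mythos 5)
Your proposal is correct and follows essentially the same route as the paper: invoke Theorem \ref{th26} to obtain $W^{1,q}_{\mathrm{loc}}$ with a quantitative bound, then apply Morrey's embedding Lemma \ref{Morrey2}. The only cosmetic difference is that the paper picks $q = \frac{N}{1-\gamma}$ so that $1-\frac{N}{q}=\gamma$ exactly, whereas you take $q \ge \max\{p, N/(1-\gamma)\}$ and then use the trivial inclusion $C^{0,1-N/q}(B_r) \hookrightarrow C^{0,\gamma}(B_r)$ on a bounded ball; your version is marginally more careful since it explicitly guarantees $q \ge p$, a constraint the paper's choice does not automatically enforce when $\gamma$ is small.
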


\begin{remark}
	\label{rem1}
It is worth emphasizing that the appearance of the constant “1” in the inequalities of the main results stems directly from \eqref{assume1}. Theorems \ref{th12} and \ref{th26} reveal distinct arguments for the cases $s_p\in \left( 0,\frac{p-1}{p}\right] $ and $s_p\in \left( \frac{p-1}{p}, 1\right) $, with the threshold $\frac{p-1}{p}$ characterizing the existence of $\nabla u$. Indeed, we define the following sequence
	\begin{align*}
		\gamma_0=s_p, \quad\gamma_{i+1}=\frac{s_pp}{q}+\gamma_i\frac{q-p+1}{q},
	\end{align*}
from which it follows that
\begin{itemize}
	\item For $s_p \in \left(0,  \frac{p-1}{p} \right]$, $\gamma_i \to \frac{s_p p}{p-1}$  as  $i \to \infty$.
	
	\smallskip
	
	\item For  $s_p \in \left( \frac{p-1}{p}, 1 \right)$, there exists an index  $i_0$  such that  $\gamma_i > 1$  for all  $i \geq i_0$,  thereby establishing the  $W^{1, q}$-regularity of weak solutions.
\end{itemize}

The motivation for introducing the sequence $\{\gamma_i\}$ is detailed in Lemmas \ref{lem10} and \ref{lem16}, and the iterative process yielding the limit parameter  $\min\left\{ 1, \frac{s_p p}{p-1} \right\}$ is presented in Lemma \ref{lem11} and Proposition \ref{pro17}. While, for $s_p \in \left( \frac{p-1}{p}, 1 \right)$,  the fractional Sobolev regularity of  $\nabla u$  is established, with the precise statement provided in Lemma \ref{lem22}.
\end{remark}

This paper is organized as follows. In Section \ref{sec2}, we introduce the notations for fractional Sobolev spaces and state several critical lemmas. Section \ref{sec3} is dedicated to establishing the local boundedness of weak solutions to problem \eqref{1.1}. Finally, Sections \ref{sec4} and \ref{sec5} address the Sobolev regularity of weak solutions in the cases that $s_p \in \left( 0, \frac{p-1}{p} \right]$ and  $s_p \in \left( \frac{p-1}{p}, 1 \right)$, respectively.

\section{Preliminaries}
\label{sec2}

In this section, we introduce notations that will be used later. For any $z \in \mathbb{R}^N$, let $|z|$ denote its Euclidean norm. Throughout the paper, we denote by $C$  generic constants whose values may vary even within the same inequality. When necessary, we explicitly indicate their dependencies, for instance, if a constant depends on $N, p, s_1, s_p$, it will be written as $C(N, p, s_1, s_p)$.

For a function $w: \Omega\rightarrow \mathbb{R}$ and $\alpha\in (0, 1)$ we denote by $w\in C^{0, \alpha}_\mathrm{loc}(\Omega)$ that $w\in C^{0, \alpha}\left(B_R(x_0)\right)$ for any ball $B_R(x_0)\subset\subset \Omega$. The semi-norm of $C^{0, \alpha}$ is defined by
\begin{align*}
	[w]_{C^{0, \alpha}\left(B_R(x_0)\right)}:=\sup\limits_{x, y\in B_R(x_0),  x\neq y}\frac{|w(x)-w(y)|}{|x-y|^\alpha}.
\end{align*}

We introduce the (fractional) Sobolev space $W^{1, q}(\Omega)$ and $W^{\gamma, q}$, where $q\ge1$ and $\gamma\in(0, 1)$. These spaces are respectively defined as

\begin{align*}
	\left\lbrace u\in L^q(\Omega)\bigg|\|u\|_{W^{1,q}(\Omega)}:=\|u\|_{L^q(\Omega)}+\|\nabla u\|_{L^q(\Omega)}<+\infty\right\rbrace 
\end{align*}
and
\begin{align*}
	\left\lbrace u\in L^q(\Omega)\bigg|[u]_{W^{\gamma,q}(\Omega)}:=\left( \int_{\Omega}\int_{\Omega}\frac{|u(x)-u(y)|^q}{|x-y|^{N+\gamma q}}\,dxdy\right)^\frac{1}{q} <+\infty\right\rbrace .
\end{align*}

For $q,\gamma>0$, the tail space $L^q_\gamma(\mathbb{R}^N)$ contains all function $w\in L^q_\mathrm{loc}(\mathbb{R}^N)$ satisfying
\begin{align*}
	\int_{\mathbb{R}^N}\frac{|w|^q}{(1+|x|)^{N+\gamma}}\,dx<+\infty
\end{align*}
and we define $\mathrm{Tail}(u;x_0,R)$ for a function $u$ belonging to the space $L^{p-1}_{s_pp}(\mathbb{R}^N)$ as
\begin{align*}
	\mathrm{Tail}(u;x_0,R):=\left( R^{s_pp}\int_{\mathbb{R}^N\backslash B_R(x_0)}\frac{|u|^{p-1}}{|x-x_0|^{N+s_p p}}\,dx\right)^\frac{1}{p-1}. 
\end{align*}

Under the assumption that $u$ is locally bounded in $\Omega$, we have the following estimate.

\begin{lemma}
	\label{lem4}
	Let $p>1$ and $s_p\in (0, 1)$. For any $u\in L^{p-1}_{s_pp}(\mathbb{R}^N)$ and any ball $B_R\equiv B_R(x_0)\subset\subset\Omega$, $r\in (0,R)$, we have
	\begin{align*}
		\mathrm{Tail}(u;x_0,r)^{p-1}\le C(N)\left( \frac{R}{r}\right)^N\left( \mathrm{Tail}(u;x_0,R)+\|u\|_{L^\infty(B_R)}\right)^{p-1}. 
	\end{align*}
\end{lemma}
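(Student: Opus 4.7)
The plan is to unfold the definition of $\mathrm{Tail}(u;x_0,r)^{p-1}$ as an integral over $\mathbb{R}^N\setminus B_r$, split that integral at the intermediate sphere of radius $R$, and estimate the two resulting pieces independently. The outer piece over $\mathbb{R}^N\setminus B_R$ should essentially reproduce $\mathrm{Tail}(u;x_0,R)^{p-1}$ after rescaling the prefactor $r^{s_pp}$, while the inner piece over the annulus $B_R\setminus B_r$ should produce the $(R/r)^N$ factor multiplying $\|u\|_{L^\infty(B_R)}^{p-1}$.

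For the outer piece the identity $r^{s_pp}=(r/R)^{s_pp}\,R^{s_pp}$ converts the integral into $(r/R)^{s_pp}\,\mathrm{Tail}(u;x_0,R)^{p-1}$, and since $r\le R$ the prefactor is at most $1$. For the annular piece the key is to extract the $(R/r)^N$ factor without picking up a constant that degenerates as $s_p\to 0$ or $p\to 1$; this is the only point where one has to be slightly careful. A naive radial integration $\int_r^R \rho^{-1-s_pp}\,d\rho$ would contribute a constant of order $1/(s_pp)$, which would spoil the dependence of $C$ on $N$ alone. Instead I would pull out $\|u\|_{L^\infty(B_R)}^{p-1}$ and apply the crude pointwise bound $|x-x_0|^{N+s_pp}\ge r^{N+s_pp}$ valid on the annulus, obtaining
\begin{align*}
	r^{s_pp}\int_{B_R\setminus B_r}\frac{dx}{|x-x_0|^{N+s_pp}}\le \frac{|B_R|}{r^N}=\omega_N\!\left(\frac{R}{r}\right)^{\!N},
\end{align*}
where $\omega_N$ denotes the volume of the unit ball in $\mathbb{R}^N$.

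Summing the two contributions, using $1\le (R/r)^N$ to absorb the outer piece into a common $(R/r)^N$ prefactor, and finally invoking the elementary inequality $a^{p-1}+b^{p-1}\le 2(a+b)^{p-1}$ for $a,b\ge 0$ and $p>1$ (which follows from monotonicity of $t\mapsto t^{p-1}$ on $[0,\infty)$ applied to $a\le a+b$ and $b\le a+b$), one arrives at the claimed bound with a constant of the form $C(N)=2(1+\omega_N)$. There is no real obstacle beyond the minor point flagged above; the argument is a short direct computation once one resists the temptation to integrate radially and instead uses only volume and pointwise kernel bounds.
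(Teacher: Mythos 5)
Your proof is correct, and it is the standard argument for this type of tail estimate: split $\mathbb{R}^N\setminus B_r$ at radius $R$, recognize the outer piece as $(r/R)^{s_pp}\mathrm{Tail}(u;x_0,R)^{p-1}\le \mathrm{Tail}(u;x_0,R)^{p-1}$, bound the kernel pointwise by $r^{-(N+s_pp)}$ on the annulus so that the $r^{s_pp}$ prefactor cancels cleanly and leaves $\omega_N(R/r)^N\|u\|^{p-1}_{L^\infty(B_R)}$, and finish with $1\le (R/r)^N$ and the monotonicity bound $a^{p-1}+b^{p-1}\le 2(a+b)^{p-1}$. The paper states this lemma without proof (it is a routine adaptation of the tail comparison that goes back to Di Castro--Kuusi--Palatucci), so there is no paper argument to compare against; your derivation correctly fills that gap with a constant depending only on $N$. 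The cautionary remark about avoiding a loose radial integration is reasonable, though not strictly necessary: the $1/(s_pp)$ from $\int_r^R\rho^{-1-s_pp}\,d\rho$ in fact cancels against $1-(r/R)^{s_pp}\le s_pp\log(R/r)$, so one would still land on an $N$-only constant; your pointwise kernel bound is simply the cleaner route.
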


Next, for $\gamma>0$ and $a\in\mathbb{R}$, we define
\begin{align*}
	J_\gamma(a):=|a|^{\gamma-2}a.
\end{align*}
We summarize two algebraic inequalities from \cite{BDL2402}, which will be used in Sections \ref{sec4} and \ref{sec5}.

\begin{lemma}
	\label{lem2}
	For any $\gamma>0$ and any $a,b\in \mathbb{R}$, we have
	\begin{align*}
		C_1\left( |a|+|b|\right) ^{\gamma-1}|b-a|\le \left| J_{\gamma+1}(b)-J_{\gamma+1}(a)\right| \le C_2\left( |a|+|b|\right) ^{\gamma-1}|b-a|
	\end{align*}
	where $C_1=\min\left\lbrace \gamma,2^{1-\gamma}\right\rbrace $, $C_2=\max\left\lbrace \gamma,2^{1-\gamma}\right\rbrace $.
\end{lemma}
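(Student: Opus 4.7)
The plan is to exploit the oddness $J_{\gamma+1}(-t)=-J_{\gamma+1}(t)$ together with the integral representation
\begin{equation*}
J_{\gamma+1}(b)-J_{\gamma+1}(a)=\gamma\,(b-a)\int_0^1\bigl|a+t(b-a)\bigr|^{\gamma-1}\,dt,
\end{equation*}
valid whenever the segment $[a,b]$ avoids the origin and extending to the general case by approximation. Both sides of the claimed inequality are positively homogeneous of degree $\gamma$ in $(a,b)$, so I will normalize freely. The oddness reduces the problem to exactly two sign configurations, which I will treat separately.

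In the same-sign case $ab\ge 0$, a sign flip lets me assume $0\le a\le b$, so that $|J_{\gamma+1}(b)-J_{\gamma+1}(a)|=b^\gamma-a^\gamma=\gamma\int_a^b u^{\gamma-1}\,du$. Setting $t=a/b\in[0,1)$, the whole claim reduces to two-sided estimates on the explicit scalar function
\begin{equation*}
f(t)=\frac{1-t^\gamma}{(1-t)(1+t)^{\gamma-1}},\qquad t\in[0,1).
\end{equation*}
The endpoint values $f(0)=1$ and $\lim_{t\to 1^-}f(t)=\gamma\cdot 2^{1-\gamma}$ already suggest the two competing constants $\gamma$ and $2^{1-\gamma}$. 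I would then use monotonicity of $u^{\gamma-1}$ on $[a,b]$, distinguishing $\gamma\ge 1$ (increasing) from $0<\gamma<1$ (decreasing), together with the elementary comparison $b\le a+b\le 2b$, to produce pointwise bounds on the integrand that translate into bounds on $f$ of the required form.

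For the opposite-sign case $ab<0$, after flipping one may take $a<0<b$, and then $|J_{\gamma+1}(b)-J_{\gamma+1}(a)|=b^\gamma+|a|^\gamma$ while $|b-a|=b+|a|$. Writing $s=|a|/(|a|+b)\in(0,1)$, the inequality collapses to two-sided bounds on $g(s)=s^\gamma+(1-s)^\gamma$ on $[0,1]$. For $\gamma\ge 1$, convexity of $t\mapsto t^\gamma$ gives $g(s)\ge 2\cdot(1/2)^\gamma=2^{1-\gamma}$ while the pointwise bound $t^\gamma\le t$ on $[0,1]$ gives $g(s)\le 1$; for $0<\gamma<1$, concavity and subadditivity reverse these to $1\le g(s)\le 2^{1-\gamma}$.

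Finally, I will collect the four endpoint cases and pick $C_1=\min\{\gamma,2^{1-\gamma}\}$ and $C_2=\max\{\gamma,2^{1-\gamma}\}$, which simultaneously dominates the bounds produced by both sign regimes. The main obstacle is not conceptual but bookkeeping: lining up the extrema of $f$ on $[0,1)$ with those of $g$ on $[0,1]$ so that a single pair $(C_1,C_2)$ works uniformly in $\gamma>0$. In particular, no fine monotonicity study of $f$ is actually required, since a rough two-sided bound using the monotonicity of $u\mapsto u^{\gamma-1}$ suffices once the constants have been chosen to absorb the opposite-sign contribution.
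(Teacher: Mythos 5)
The paper does not in fact prove this lemma: it cites it directly from \cite{BDL2402}, so there is no in-paper argument to compare against, and your proposal has to be judged on its own. Your overall strategy is sound — use homogeneity and oddness to split into the regimes $ab\ge 0$ and $ab<0$, reduce each to a one-variable scalar inequality (your $f$ and $g$), and read off the competing constants $\gamma$ and $2^{1-\gamma}$ from the endpoint values. The opposite-sign half is handled cleanly and correctly: convexity/concavity of $t\mapsto t^\gamma$ gives $2^{1-\gamma}\le g(s)\le 1$ for $\gamma\ge 1$ and $1\le g(s)\le 2^{1-\gamma}$ for $0<\gamma<1$, and both ranges sit inside $[\min\{\gamma,2^{1-\gamma}\},\max\{\gamma,2^{1-\gamma}\}]$.

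The gap is in the same-sign case, in the direction that produces the constant $2^{1-\gamma}$. For $\gamma\ge 1$ the only pointwise bound on the integrand of $b^\gamma-a^\gamma=\gamma\int_a^b u^{\gamma-1}\,du$ coming from monotonicity is $u^{\gamma-1}\ge a^{\gamma-1}$, which gives $b^\gamma-a^\gamma\ge \gamma a^{\gamma-1}(b-a)$; this degenerates to $0$ as $a\to 0^+$ and cannot possibly yield $2^{1-\gamma}(a+b)^{\gamma-1}(b-a)$. Restricting the integral to $[(a+b)/2,b]$ and using $u^{\gamma-1}\ge\bigl(\frac{a+b}{2}\bigr)^{\gamma-1}$ there only recovers a factor $\gamma/2$, so it works for $\gamma\ge 2$ but not for $1<\gamma<2$. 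Thus the assertion that ``a rough two-sided bound using the monotonicity of $u\mapsto u^{\gamma-1}$ suffices'' is not correct as written. The fix is small but is not a pointwise integrand bound: from $a\bigl(b^{\gamma-1}-a^{\gamma-1}\bigr)\ge 0$ one gets $b^\gamma-a^\gamma\ge b^{\gamma-1}(b-a)$, and since $b\ge\frac{a+b}{2}$ and $\gamma-1\ge 0$ this is $\ge 2^{1-\gamma}(a+b)^{\gamma-1}(b-a)$. The mirror estimate $b^\gamma-a^\gamma\le b^{\gamma-1}(b-a)\le 2^{1-\gamma}(a+b)^{\gamma-1}(b-a)$ (now using $a(b^{\gamma-1}-a^{\gamma-1})\le 0$ and $\gamma-1<0$) supplies the missing upper bound when $0<\gamma<1$. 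The remaining two same-sign bounds, those with constant $\gamma$, do follow from the pointwise estimate $u^{\gamma-1}\lessgtr(a+b)^{\gamma-1}$ exactly as you describe, so with this one replacement your proof closes.
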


\begin{lemma}
	\label{lem3}
	Let $p\ge2$ and $\delta\ge1$. Then there exists a constant $C$ such that
	\begin{align*}
		&\quad\left( J_p(a-b)-J_p(c-d)\right)\left( J_{\delta+1}(a-c)e^2-J_{\delta+1}(b-d)f^2\right)\nonumber\\ 
		&\ge \frac{1}{C}\left( |a-b|+|c-d|\right)^{p-2}\left( |a-c|+|b-d|\right)^{\delta-1}|(a-c)-(b-d)|^2(e^2+f^2)\nonumber\\
		&\quad-C \left( |a-b|+|c-d|\right)^{p-2}\left( |a-c|+|b-d|\right)^{\delta+1}|e-f|^2 
	\end{align*}
	for any $a,b,c,d\in \mathbb{R}$ and $e,f\in \mathbb{R}^+$.
\end{lemma}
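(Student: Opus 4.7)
The plan is to decompose the left-hand side into a sign-definite ``principal'' piece and an antisymmetric cross piece in $e,f$, and then absorb the cross piece via Young's inequality. The starting point is the algebraic identity
\begin{align*}
J_{\delta+1}(a-c)\,e^2 - J_{\delta+1}(b-d)\,f^2
&= \frac{1}{2}\left[J_{\delta+1}(a-c)-J_{\delta+1}(b-d)\right](e^2+f^2)\\
&\quad + \frac{1}{2}\left[J_{\delta+1}(a-c)+J_{\delta+1}(b-d)\right](e^2-f^2),
\end{align*}
which, after multiplication by $J_p(a-b)-J_p(c-d)$, splits the left-hand side of the lemma into a principal term and a remainder term.

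For the principal term, the key observation is that $(a-b)-(c-d)=(a-c)-(b-d)$, so that both scalar differences $J_p(a-b)-J_p(c-d)$ and $J_{\delta+1}(a-c)-J_{\delta+1}(b-d)$ are monotone functions of the same quantity and therefore share a common sign. Applying the lower bound in Lemma \ref{lem2} to each factor then yields
\begin{align*}
&\left[J_p(a-b)-J_p(c-d)\right]\left[J_{\delta+1}(a-c)-J_{\delta+1}(b-d)\right]\\
&\qquad \ge c_0\,(|a-b|+|c-d|)^{p-2}(|a-c|+|b-d|)^{\delta-1}\,|(a-c)-(b-d)|^2,
\end{align*}
so that, multiplied by $(e^2+f^2)/2$, this piece already provides the positive term claimed in the lemma.

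For the remainder term, the upper bound of Lemma \ref{lem2} gives $|J_p(a-b)-J_p(c-d)|\le C(|a-b|+|c-d|)^{p-2}|(a-c)-(b-d)|$; combined with the elementary estimates $|J_{\delta+1}(a-c)+J_{\delta+1}(b-d)|\le 2(|a-c|+|b-d|)^\delta$ and $|e^2-f^2|=(e+f)|e-f|$, the remainder is controlled in absolute value by $C(|a-b|+|c-d|)^{p-2}(|a-c|+|b-d|)^\delta |(a-c)-(b-d)|(e+f)|e-f|$. Young's inequality $XY\le \varepsilon X^2+C_\varepsilon Y^2$ applied with
\begin{align*}
X &= (|a-c|+|b-d|)^{(\delta-1)/2}\,|(a-c)-(b-d)|\,(e+f),\\
Y &= (|a-c|+|b-d|)^{(\delta+1)/2}\,|e-f|
\end{align*}
then splits the exponent $\delta=\tfrac{\delta-1}{2}+\tfrac{\delta+1}{2}$ in exactly the way the target inequality demands. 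Using $(e+f)^2\le 2(e^2+f^2)$ and choosing $\varepsilon$ small enough that the $X^2$-contribution is absorbed into the principal term, what remains from $Y^2$ is exactly the stated error $-C(|a-b|+|c-d|)^{p-2}(|a-c|+|b-d|)^{\delta+1}|e-f|^2$.

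The main obstacle, though minor, is combinatorial: one must split the exponents on $|a-c|+|b-d|$ and the factors $(e+f)$ and $|e-f|$ so that the power absorbed into the principal term is exactly $\delta-1$, while the residual power on $|e-f|^2$ is exactly $\delta+1$. No analytic subtleties arise; the argument is a purely algebraic manipulation built on the scalar monotonicity estimates of Lemma \ref{lem2} and a single application of Young's inequality.
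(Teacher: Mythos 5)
Your proof is correct. The paper does not actually prove Lemma \ref{lem3}; it imports it from the reference [BDL2402] together with Lemma \ref{lem2}, so there is no in-paper argument to compare against. Your reconstruction follows the standard route for this type of algebraic energy estimate and is exactly what one would expect the cited source to do: split $J_{\delta+1}(a-c)e^2 - J_{\delta+1}(b-d)f^2$ into a symmetric part (in $e^2+f^2$) and an antisymmetric part (in $e^2-f^2$); observe that $(a-b)-(c-d)=(a-c)-(b-d)$, so that by strict monotonicity of $J_p$ and $J_{\delta+1}$ the two scalar differences $J_p(a-b)-J_p(c-d)$ and $J_{\delta+1}(a-c)-J_{\delta+1}(b-d)$ share a sign, making the principal term nonnegative and amenable to the two-sided bound of Lemma \ref{lem2}; and finally absorb the cross term via Young's inequality with the exponent split $\delta = \tfrac{\delta-1}{2}+\tfrac{\delta+1}{2}$ and $(e+f)^2\le 2(e^2+f^2)$. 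All the steps are sound, the constants close, and the exponent bookkeeping lands exactly on the stated right-hand side.
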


We now recall a classical iteration lemma that comes from \cite[Lemma 4.3]{HS14} (see also \cite[Lemma 2.1]{VZ10}) and provide several Sobolev embedding inequalities. Lemmas \ref{lem23}--\ref{lemMorrey1} are derived from \cite{DPV12} and Lemma \ref{Morrey2} is standard.

\begin{lemma}
	\label{lemY}
	Let $\left\lbrace Y_j\right\rbrace_{j\in\mathbb{N}} $ be a sequence of positive numbers, satisfying the recursive inequalities
	\begin{align*}
		Y_{j+1}\le Kb^jY_j^{1+\delta}
	\end{align*}
	for all $j=0,1,2,\dots$ with $K>0,b>1$ and $\delta>0$ are given numbers. If
	\begin{align*}
		Y_0\le \min\left\lbrace 1,(2K)^{-\frac{1}{\delta}}b^{-\frac{1}{\delta^2}}\right\rbrace, 
	\end{align*}
	we have $Y_j\le 1$ for some $j\in\mathbb{N}$. Moreover,
	\begin{align*}
		Y_j\le\min\left\lbrace 1,(2K)^{-\frac{1}{\delta}}b^{-\frac{1}{\delta^2}}b^{-\frac{j}{\delta}}\right\rbrace 
	\end{align*}
	for any $j\ge j_0$, where $j_0$ is the smallest $j\in \mathbb{N}$ that satisfies $Y_j\le 1$. In particular, $Y_j$ converges to $0$ as $j\rightarrow+\infty$. 
\end{lemma}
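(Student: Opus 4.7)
The plan is a single geometric-decay induction, tuned so that the growing factor $b^{j}$ in the recursion is absorbed by the super-linear exponent $1+\delta$ on $Y_{j}$. Set $\tilde{M}:=\min\bigl\{1,\,(2K)^{-1/\delta}b^{-1/\delta^{2}}\bigr\}$, which is exactly the standing hypothesis on $Y_{0}$. I will show by induction on $j$ that
\[
Y_{j}\le \tilde{M}\, b^{-j/\delta}\qquad\text{for every } j\ge 0.
\]
Once this single bound is in hand, every conclusion of the lemma follows at once: since $\tilde{M}\le 1$ and $b^{-j/\delta}\le 1$, we get $Y_{j}\le 1$ for all $j$, so the smallest such index is $j_{0}=0$; combining with $\tilde{M}\le (2K)^{-1/\delta}b^{-1/\delta^{2}}$ also yields the other entry of the $\min$ appearing in the statement; and $b^{-j/\delta}\to 0$ gives $Y_{j}\to 0$.

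The inductive step is routine. Assuming $Y_{j}\le\tilde{M}b^{-j/\delta}$, the recursion gives
\[
Y_{j+1}\le K b^{j}\bigl(\tilde{M}b^{-j/\delta}\bigr)^{1+\delta}=\bigl(K\tilde{M}^{\,\delta}\bigr)\cdot\tilde{M}\,b^{-j/\delta},
\]
so the whole step collapses to checking the single algebraic inequality $K\tilde{M}^{\,\delta}\le b^{-1/\delta}$, which I would verify by cases on the definition of $\tilde{M}$. If $\tilde{M}=(2K)^{-1/\delta}b^{-1/\delta^{2}}$, then direct computation gives $K\tilde{M}^{\,\delta}=\tfrac{1}{2}b^{-1/\delta}$. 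If instead $\tilde{M}=1$, then $(2K)^{-1/\delta}b^{-1/\delta^{2}}\ge 1$ forces $K\le\tfrac{1}{2}b^{-1/\delta}$, so $K\tilde{M}^{\,\delta}=K\le b^{-1/\delta}$. Either way, the inductive step closes with $Y_{j+1}\le\tilde{M}b^{-(j+1)/\delta}$.

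The main subtlety, rather than any calculation, is the choice of ansatz: it is essential to carry $\tilde{M}$, not the larger constant $M:=(2K)^{-1/\delta}b^{-1/\delta^{2}}$, through the induction. The naive hypothesis $Y_{j}\le Mb^{-j/\delta}$ closes algebraically in exactly the same way, but in the regime $M>1$ it fails to deliver $Y_{j}\le 1$ on the small-index range $j\le \delta\log_{b}M$, which is what is needed to conclude $Y_{j}\le\min\{1,Mb^{-j/\delta}\}$ for all $j\ge j_{0}=0$. Truncating the ansatz at $1$ threads the trivial upper bound through every step at no cost, and the extra factor of $2$ inside $(2K)^{-1/\delta}$ is exactly the slack that lets the inductive inequality $K\tilde{M}^{\,\delta}\le b^{-1/\delta}$ close strictly rather than at equality.
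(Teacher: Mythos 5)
Your proof is correct. The paper does not prove this lemma itself (it cites \cite[Lemma 4.3]{HS14} and \cite[Lemma 2.1]{VZ10}), but the argument in those references is exactly the geometric-decay induction you carry out: one verifies $Y_{j}\le C_{0}\,b^{-j/\delta}$ for a suitable constant $C_{0}\le 1$ and closes the step by the algebraic inequality $K C_{0}^{\delta}\le b^{-1/\delta}$, which your two-case check establishes.

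One small remark on your discussion of the ansatz: carrying the truncated constant $\tilde{M}=\min\{1,M\}$ is clean but not the only working choice. Since the hypothesis is $Y_{0}\le\tilde{M}$, the even more common ansatz $Y_{j}\le Y_{0}\,b^{-j/\delta}$ closes in the same way (one needs $K Y_{0}^{\delta}\le b^{-1/\delta}$, which holds by the hypothesis on $Y_{0}$) and immediately yields $Y_{j}\le Y_{0}\le 1$ for every $j$. So the "subtlety" you flag is real if one insists on carrying $M$, but it evaporates under the most natural choice $Y_{0}$. Also, the factor $2$ in $(2K)^{-1/\delta}$ provides slack but is not essential: with $K^{-1/\delta}b^{-1/\delta^{2}}$ one gets $KY_{0}^{\delta}\le b^{-1/\delta}$ at equality, which still closes the (non-strict) induction. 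Finally, as you correctly observe, under the stated smallness hypothesis $Y_{0}\le 1$ automatically, so $j_{0}=0$ and the restriction $j\ge j_{0}$ in the conclusion is vacuous; that is an artifact of the lemma's phrasing carried over from the cited source, not a gap in your argument.
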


\begin{lemma}[\text{Embedding $W^{\gamma,q}\hookrightarrow L^{\frac{Nq}{N-\gamma q}}$}]
	\label{lem23}
	Let $q\ge1$, $\gamma\in(0,1)$ such that $\gamma q<N$. Then, for any $w\in W^{\gamma,q}(B_R)$, we have
	\begin{align*}
		\left[ \mint_{B_R}\left| w-(w)_{B_R}\right|^\frac{Nq}{N-\gamma q}\,dx \right]^{\frac{N-\gamma q}{Nq}}\le CR^\gamma\left[ \int_{B_R}\mint_{B_R}\frac{|w(x)-w(y)|^q}{|x-y|^{N+\gamma q}}\,dxdy\right]^\frac{1}{q}  
	\end{align*}
	with $C=C(N,\gamma,q)$.
\end{lemma}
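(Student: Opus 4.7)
The plan is to reduce the estimate to a scale-invariant one on the unit ball and then assemble it from three standard ingredients: a fractional Poincar\'{e} inequality, an extension theorem for fractional Sobolev spaces on Lipschitz domains, and the fractional Sobolev embedding on $\mathbb{R}^N$, all of which are contained in \cite{DPV12}.

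Setting $\tilde w(y):=w(x_0+Ry)$ for $y\in B_1$, a change of variables gives $(\tilde w)_{B_1}=(w)_{B_R}$ together with
\begin{align*}
	\mint_{B_R}|w-(w)_{B_R}|^{q^*}\,dx
	&=\mint_{B_1}|\tilde w-(\tilde w)_{B_1}|^{q^*}\,dy, \\
	\int_{B_R}\mint_{B_R}\frac{|w(x)-w(y)|^q}{|x-y|^{N+\gamma q}}\,dxdy
	&=R^{-\gamma q}\int_{B_1}\mint_{B_1}\frac{|\tilde w(\xi)-\tilde w(\eta)|^q}{|\xi-\eta|^{N+\gamma q}}\,d\xi d\eta,
\end{align*}
where $q^*:=\frac{Nq}{N-\gamma q}$. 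Thus the factor $R^{\gamma}$ appearing in the statement is accounted for entirely by the scaling, and it suffices to prove the case $R=1$, namely
\begin{align*}
	\|u\|_{L^{q^*}(B_1)}\le C\,[u]_{W^{\gamma,q}(B_1)}\quad\text{with}\quad u:=\tilde w-(\tilde w)_{B_1}.
\end{align*}

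For this unit-ball estimate I would first apply the fractional Poincar\'{e} inequality (Jensen's inequality applied to the average defining $(\tilde w)_{B_1}$) to obtain $\|u\|_{L^q(B_1)}\le C[u]_{W^{\gamma,q}(B_1)}$; since $[u]_{W^{\gamma,q}(B_1)}=[\tilde w]_{W^{\gamma,q}(B_1)}$, this controls the full norm $\|u\|_{W^{\gamma,q}(B_1)}$. Next, by the extension theorem on the Lipschitz domain $B_1$, there exists $Eu\in W^{\gamma,q}(\mathbb{R}^N)$ with $Eu|_{B_1}=u$ and $\|Eu\|_{W^{\gamma,q}(\mathbb{R}^N)}\le C\|u\|_{W^{\gamma,q}(B_1)}$. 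Applying the global fractional Sobolev embedding $W^{\gamma,q}(\mathbb{R}^N)\hookrightarrow L^{q^*}(\mathbb{R}^N)$ to $Eu$ then yields
\begin{align*}
	\|u\|_{L^{q^*}(B_1)}
	\le \|Eu\|_{L^{q^*}(\mathbb{R}^N)}
	\le C[Eu]_{W^{\gamma,q}(\mathbb{R}^N)}
	\le C[\tilde w]_{W^{\gamma,q}(B_1)},
\end{align*}
which is exactly the unit-ball version of the claim.

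The main (and really only) difficulty is bookkeeping the dependence on $R$: this is handled by performing the scaling reduction first, so that the extension operator is applied only on the fixed reference ball $B_1$; a direct extension from $B_R$ would carry an unwanted $R$-dependent operator norm that would pollute the final constant. With the scaling arranged as above, the three cited ingredients combine to give the desired inequality with a constant $C=C(N,\gamma,q)$.
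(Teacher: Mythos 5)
Your proof is correct. Note that the paper itself gives no proof of this lemma: in Section~2 it merely states ``Lemmas \ref{lem23}--\ref{lemMorrey1} are derived from \cite{DPV12}.'' Your argument --- rescaling to $B_1$ so that the $R^\gamma$ factor is produced purely by the change of variables, then combining a fractional Poincar\'{e} inequality (Jensen on the mean), the extension theorem for the Lipschitz domain $B_1$, and the global fractional Sobolev embedding $W^{\gamma,q}(\mathbb{R}^N)\hookrightarrow L^{q^*}(\mathbb{R}^N)$ --- is exactly the standard derivation one would carry out from the ingredients in \cite{DPV12}, and your remark about why the scaling must precede the extension (to avoid an $R$-dependent extension-operator norm) is a correct and worthwhile observation. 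The only cosmetic point is that the unit-ball inequality $\|u\|_{L^{q^*}(B_1)}\le C[u]_{W^{\gamma,q}(B_1)}$ matches the scaled claim only up to fixed powers of $|B_1|$ coming from the placement of the averages $\mint$ versus plain integrals; these are harmlessly absorbed into $C(N,\gamma,q)$, as you implicitly do.
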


\begin{lemma}[\text{Embedding $W^{1,q}\hookrightarrow W^{\gamma,q}$}]
	\label{lem18}
	Let $q\ge1$ and $\gamma\in(0,1)$. Then, for any $w\in W^{1,q}(B_R)$, we have
	\begin{align*}
		\int_{B_R}\int_{B_R}\frac{|w(x)-w(y)|^q}{|x-y|^{N+\gamma q}}\,dxdy\le C(N)\frac{R^{(1-\gamma)q}}{(1-\gamma)q}\int_{B_R}|\nabla w|^q\,dx.
	\end{align*}
\end{lemma}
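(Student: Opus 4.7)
The plan is to prove the inequality first for smooth functions $w \in C^\infty(\overline{B_R})$ and then pass to arbitrary $w \in W^{1,q}(B_R)$ by density, since smooth functions are dense in $W^{1,q}(B_R)$ on the Lipschitz domain $B_R$. For smooth $w$, I would start from the fundamental theorem of calculus along straight segments,
\[
w(x) - w(y) \;=\; \int_0^1 \nabla w\bigl(y + t(x-y)\bigr) \cdot (x-y)\, dt,
\]
and then apply Jensen's inequality with respect to the probability measure $dt$ on $[0,1]$ to obtain the pointwise estimate
\[
|w(x) - w(y)|^q \;\le\; |x-y|^q \int_0^1 \bigl|\nabla w\bigl(y + t(x-y)\bigr)\bigr|^q \, dt.
\]

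Inserting this bound into the Gagliardo seminorm cancels $|x-y|^q$ against $|x-y|^{N+\gamma q}$ and produces the exponent $N-(1-\gamma)q$ in the denominator. After applying Fubini to bring the $t$-integral outside, I would perform the substitution $h = x - y$ and, with $t \in [0,1]$ and $h$ fixed, change variable $z = y + th$, which has unit Jacobian. The crucial geometric observation is that since $B_R$ is convex, the joint condition $y \in B_R$ and $y + h \in B_R$ forces the interior point $z = y + th = (1-t)y + t(y+h)$ to also lie in $B_R$ for every $t \in [0,1]$. Hence
\[
\int_{\{y \in B_R,\, y+h \in B_R\}} \bigl|\nabla w(y + th)\bigr|^q \, dy \;\le\; \int_{B_R} |\nabla w(z)|^q \, dz \;=\; \|\nabla w\|_{L^q(B_R)}^q,
\]
uniformly in $t$ and $h$. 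Since any two points of $B_R$ are at most $2R$ apart, what remains is the radial integral
\[
\int_{|h| \le 2R} \frac{dh}{|h|^{N - (1-\gamma)q}} \;=\; C(N)\, \frac{(2R)^{(1-\gamma)q}}{(1-\gamma)q},
\]
which yields exactly the factor $R^{(1-\gamma)q}/((1-\gamma)q)$ appearing in the statement, after absorbing $2^{(1-\gamma)q}$ into $C(N)$.

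The proof is essentially routine, so no serious obstacle is expected. The only real subtlety is tracking the admissible set of $y$ after the translation $z = y + th$; here convexity of the ball is the single geometric input that makes the bound clean. It is worth noting that the factor $1/((1-\gamma)q)$ in the constant is precisely the blow-up of the radial integral as $\gamma \to 1^-$, so the stated dependence on $\gamma$ is sharp and cannot be improved by this argument.
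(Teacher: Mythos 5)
The paper states Lemma \ref{lem18} without proof, citing \cite{DPV12}; your argument is the standard one underlying that reference, and it is correct: the segment representation for smooth $w$ on the convex ball $B_R$, Jensen's inequality, Fubini, and the unit-Jacobian translation $z=y+th$ (admissible because, by convexity, $y\in B_R$ and $y+h\in B_R$ force $z\in B_R$), followed by density. This matches what one would expect.

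One caveat on the constant. Your radial integral runs over $|h|\le 2R$ and produces $(2R)^{(1-\gamma)q}/((1-\gamma)q)$ times the surface measure of $\mathbb{S}^{N-1}$, so the multiplicative constant you obtain is a dimensional constant times $2^{(1-\gamma)q}$. This factor depends on $q$ and $\gamma$ and is not uniformly bounded in $q$, so it cannot literally be ``absorbed into $C(N)$'' as you claim at the end; what your argument proves has constant $C(N,q,\gamma)$. The paper's $C(N)$ is correspondingly imprecise, but this is harmless in context, since the lemma is always invoked with $q$ and $\gamma$ fixed. For full rigor in the density step you should also note that, after passing to an a.e.-convergent subsequence $w_k\to w$, the Gagliardo seminorm on the left-hand side is handled by Fatou's lemma while the right-hand side converges by construction.
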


\begin{lemma}[\text{Embedding $W^{\gamma,q}\hookrightarrow C^{0,\gamma-\frac{N}{q}}$}]
	\label{lemMorrey1}
	Let $q\ge1$ and $\gamma\in (0,1)$ such that $\gamma q >N$. Then, there exists a constant $C=C(N, q, \gamma)$ such that for any $w\in W^{\gamma,q}(B_R)$, we have
	\begin{align*}
		[w]_{C^{0,\gamma-\frac{N}{q}}(B_R)}\le C[w]_{W^{\gamma, q}(B_R)}.
	\end{align*}
\end{lemma}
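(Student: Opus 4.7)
The plan is to adapt the classical Campanato--Morrey argument to the fractional setting. The first ingredient would be a fractional Poincar\'{e} inequality on balls: for $B_r(x_0)\subset B_R$, Jensen's inequality combined with the trivial bound $|x-y|^{N+\gamma q}\le (2r)^{N+\gamma q}$ valid for $x,y\in B_r(x_0)$ yields
\[
\int_{B_r(x_0)}|w-(w)_{B_r(x_0)}|^q\,dx \le C\,r^{\gamma q}\int_{B_r(x_0)}\int_{B_r(x_0)}\frac{|w(x)-w(y)|^q}{|x-y|^{N+\gamma q}}\,dxdy.
\]
Note that Lemma \ref{lem23} does not apply directly here, as it requires $\gamma q<N$, whereas the present setting is the complementary regime.

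Next, for a Lebesgue point $x\in B_R$ and $0<\rho\le R$, I would telescope across the dyadic chain $\{B_{2^{-k}\rho}(x)\}_{k\ge 0}$ by writing
\[
|w(x)-(w)_{B_\rho(x)}| \le \sum_{k=0}^{\infty}\bigl|(w)_{B_{2^{-k-1}\rho}(x)}-(w)_{B_{2^{-k}\rho}(x)}\bigr|.
\]
Each summand is controlled via H\"{o}lder's inequality, the volume ratio $|B_{2^{-k}\rho}|/|B_{2^{-k-1}\rho}|=2^N$, and the fractional Poincar\'{e} estimate above, producing a bound by $C(2^{-k}\rho)^{\gamma-N/q}[w]_{W^{\gamma,q}(B_R)}$. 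Since the hypothesis $\gamma q>N$ ensures $\gamma-N/q>0$, the resulting geometric series converges and yields
\[
|w(x)-(w)_{B_\rho(x)}| \le C\,\rho^{\gamma-N/q}\,[w]_{W^{\gamma,q}(B_R)}.
\]

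Finally, for two Lebesgue points $x,y\in B_R$ with $\rho=|x-y|$, I would split
\[
|w(x)-w(y)| \le |w(x)-(w)_{B_{2\rho}(x)}|+\bigl|(w)_{B_{2\rho}(x)}-(w)_{B_{2\rho}(y)}\bigr|+|(w)_{B_{2\rho}(y)}-w(y)|.
\]
The outer terms are bounded by the Campanato estimate just established; the middle term is handled by inserting $(w)_{B_{4\rho}(x)}$ as an intermediate quantity, using $B_{2\rho}(x),B_{2\rho}(y)\subset B_{4\rho}(x)$, inclusion of domains of integration, and the Poincar\'{e} inequality on $B_{4\rho}(x)$. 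Combining these yields $|w(x)-w(y)|\le C\rho^{\gamma-N/q}[w]_{W^{\gamma,q}(B_R)}$, and passing to the continuous representative on a null set completes the argument. The only point requiring care is tracking the exponent $\gamma-N/q$ across the three steps; no genuine obstacle arises, since the assumption $\gamma q>N$ is used precisely to ensure convergence of the telescoping series.
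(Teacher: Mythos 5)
The paper itself does not prove this lemma; it simply cites \cite{DPV12} alongside Lemmas \ref{lem23}--\ref{lemMorrey1}, so there is no in-paper argument to compare against. Your proposal is the classical Campanato chaining argument adapted to the Gagliardo seminorm, which is in spirit the same route as the cited reference, and the core logic is sound: the mean-oscillation decay $\rho^{\gamma-N/q}$ falls out of the fractional Poincar\'{e} estimate on a ball, and $\gamma q>N$ is exactly the condition that makes the dyadic telescope converge. You also correctly flag that Lemma \ref{lem23} does not apply, being in the complementary regime $\gamma q<N$.

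One technical point is left implicit and needs fixing: when $x$ is near $\partial B_R$, or when $\rho=|x-y|$ is comparable to $R$, the balls $B_{2^{-k}\rho}(x)$ and $B_{4\rho}(x)$ protrude outside $B_R$, so the averages $(w)_{B_\sigma(x)}$ are not defined (as $w$ lives only on $B_R$) and the right-hand side of your Poincar\'{e} inequality is not dominated by $[w]_{W^{\gamma,q}(B_R)}$. The standard remedy is to run the entire argument with the convex sets $B_\sigma(x)\cap B_R$ in place of $B_\sigma(x)$: the diameter bound $|x-y|\le 2\sigma$ still holds, the measure-density bound $|B_\sigma(x)\cap B_R|\ge c(N)\sigma^N$ holds for $x\in B_R$ and $0<\sigma\le 2R$ because $B_R$ is a ball, and the inclusion relations used in the telescope and in the triangle-inequality step survive intersection with $B_R$; only $N$-dependent constants change. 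With that modification the proof is complete. An alternative found in some expositions is to first extend $w$ to $W^{\gamma,q}(\mathbb{R}^N)$ via a bounded extension operator and chain on genuine balls, but the intersection argument is more elementary since it avoids an extension theorem.
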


\begin{lemma}[\text{Embedding $W^{1,q}\hookrightarrow C^{0,1-\frac{N}{q}}$}]
	\label{Morrey2}
	Let $q> N$, there exists a constant $C=C(N, q)$ such that for any $w\in W^{1,q}(B_R)$,
	\begin{align*}
		[w]_{C^{0,1-\frac{N}{q}}(B_R)}\le C\|\nabla w\|_{L^q(B_R)}.
	\end{align*}
\end{lemma}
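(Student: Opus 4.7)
The plan is to establish Lemma \ref{Morrey2} via the classical Morrey argument, passing through a Campanato-type mean oscillation estimate and then using a telescoping / symmetric chaining argument between two points. Since the statement concerns the standard local Morrey embedding on a ball, everything will be reduced to estimates on ordinary balls in $\mathbb{R}^N$, and no tools beyond the Sobolev theory recalled in Lemmas \ref{lem23}--\ref{lemMorrey1} are needed.

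First I would prove the auxiliary mean oscillation estimate: for every ball $B_\rho(z)\subset B_R$,
\begin{equation*}
\left| w(x)-(w)_{B_\rho(z)}\right| \le C(N)\,\rho^{\,1-N/q}\,\|\nabla w\|_{L^q(B_\rho(z))}\quad\text{for a.e. }x\in B_\rho(z).
\end{equation*}
The quickest route is the pointwise identity obtained by integrating $\nabla w$ along radial segments from $x$: writing
\begin{equation*}
w(x)-w(y)=-\int_0^{|x-y|}\nabla w(x+t\omega)\cdot\omega\,dt,\qquad \omega=\tfrac{y-x}{|x-y|},
\end{equation*}
averaging in $y$ over $B_\rho(z)$, using Fubini and bounding $|x-y|\le 2\rho$, and then applying H\"older's inequality in $L^q$ with the condition $q>N$ (which makes the integral $\int_{B_{2\rho}(x)}|x-y|^{(1-N)q/(q-1)}\,dy$ finite). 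Equivalently, one may invoke the standard Poincar\'e inequality on $B_\rho$ followed by H\"older. The exponent $1-N/q$ appears naturally from the scaling of this integral.

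Next I would chain two points $x,y\in B_R$ with $x\ne y$ by setting $r:=|x-y|$ and considering the ball $B=B_r(\tfrac{x+y}{2})\cap B_R$, together with the concentric balls $B_r(x)\cap B_R$ and $B_r(y)\cap B_R$. Using the triangle inequality
\begin{equation*}
|w(x)-w(y)|\le |w(x)-(w)_{B_r(x)}|+|(w)_{B_r(x)}-(w)_{B_r(y)}|+|(w)_{B_r(y)}-w(y)|,
\end{equation*}
the first and third terms are controlled directly by the mean oscillation estimate above with $\rho=r$, while the middle term is controlled by noting that $B_r(x)\cap B_r(y)$ contains a ball of radius comparable to $r$, so both averages differ from the average over that common ball by $C\,r^{1-N/q}\|\nabla w\|_{L^q(B_{2r})}$. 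Summing yields $|w(x)-w(y)|\le C\,|x-y|^{1-N/q}\|\nabla w\|_{L^q(B_R)}$, which upon taking the supremum over $x\ne y$ gives exactly $[w]_{C^{0,1-N/q}(B_R)}\le C\|\nabla w\|_{L^q(B_R)}$ with $C=C(N,q)$.

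I do not expect any substantive obstacle here: the only technical point is handling pairs $x,y$ near the boundary of $B_R$, which is dealt with by shrinking the radii in the chaining argument to $r\wedge\operatorname{dist}(x,\partial B_R)$ and the like, or simply by extending $w$ to a slightly larger ball via a standard Sobolev extension. The condition $q>N$ is used exactly once but decisively, to ensure the integrability of $|x-y|^{(1-N)q/(q-1)}$, and this is what both produces the H\"older exponent $1-N/q$ and forces the constant to depend only on $N$ and $q$.
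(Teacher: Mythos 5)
The paper gives no proof of this lemma and simply records it as ``standard'' (right after deriving Lemmas \ref{lem23}--\ref{lemMorrey1} from \cite{DPV12}), so there is no in-text argument to compare against; your proposal is the classical Morrey embedding proof and is correct in substance. Two small caveats are worth flagging. First, the parenthetical ``equivalently, one may invoke the standard Poincar\'e inequality on $B_\rho$ followed by H\"older'' is not an equivalent route as written: Poincar\'e controls only the $L^q$-average of $w-(w)_{B_\rho}$, not its $L^\infty$-norm, so one cannot dispense with the Riesz-potential (radial integration) step that actually produces the pointwise mean-oscillation bound. Second, for the boundary issue you flag, an extension operator would a priori carry a constant depending on $R$ through the $L^q$-norm of $w$ itself (not just $\nabla w$), so one would need to rescale to $R=1$ first; it is cleaner to exploit the convexity of $B_R$ directly. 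Taking $W:=B_r(x)\cap B_r(y)\cap B_R$ with $r=|x-y|$, one has $|W|\ge c(N)\,r^N$ because the midpoint $\tfrac{x+y}{2}$ lies in $B_R$ and $r/2\le R$, and since all segments from $x$ (respectively $y$) to points of $W$ remain in $B_R$ and have length at most $2r$, the radial-integration estimate gives $|w(x)-(w)_W|\le C(N,q)\,r^{1-N/q}\|\nabla w\|_{L^q(B_R)}$ with $C$ independent of $R$, consistent with the scale invariance of the stated inequality and with the claimed dependence $C=C(N,q)$.
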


We end the section with technical lemmas that are essential for applying the finite difference quotient technique to study the local Sobolev regularity of weak solutions. All the following lemmas have been summarized in \cite{GT83, BL17, G15, DM2301, DKLN23, D04, L19}. For a direction vector $h\in \mathbb R^N$ and measurable $u\in \Omega\to \mathbb R$, define the difference operator 
\begin{align*}
	\tau_hu(x):=u(x+h)-u(x), \quad x\in\mathbb{R}^N.
\end{align*}

\begin{lemma} 
	\label{lem5}
	Let $q>1$, $\gamma\in (0,1)$, $R>0$ and $d\in (0,R)$. Then, there exists a constant $C=C(N,q)$ such that for any $w\in W^{\gamma,q}(B_R)$,
	\begin{align*}
		\int_{B_{R-d}}|\tau_hw|^q\,dx\le C|h|^{\gamma q}\left[ (1-\gamma)[w]^q_{W^{\gamma,q}(B_R)}+\left( \frac{R^{(1-\gamma)q}}{d^q}+\frac{1}{\gamma d^{\gamma q}}\right)\|w\|^q_{L^q(B_R)} \right] 
	\end{align*}
	for any $h\in B_d\backslash \left\lbrace 0\right\rbrace$.
\end{lemma}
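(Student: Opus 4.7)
The plan is to use the classical averaging method that represents a finite difference as an average of two Gagliardo-type fractional differences. The key geometric observation is that, for $x\in B_{R-d}$ and $h\in B_d\setminus\{0\}$, we have $|x|+|h|<(R-d)+d=R$, so $x$, $x+h$, and the entire ball $B_{|h|}(x)$ are contained in $B_R$.

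Concretely, I would start from the elementary identity
$$w(x+h)-w(x)=\frac{1}{|B_{|h|}|}\int_{B_{|h|}(x)}\bigl\{[w(x+h)-w(y)]-[w(x)-w(y)]\bigr\}\,dy,$$
apply Jensen's inequality to bring the $q$-th power inside, and then multiply and divide by $|\xi-y|^{N+\gamma q}$ (for $\xi\in\{x,x+h\}$) while using the bound $|\xi-y|\le 2|h|$ valid for $y\in B_{|h|}(x)$. This produces a factor of $|h|^{\gamma q}$ and reveals the Gagliardo kernel. Integrating in $x\in B_{R-d}$ via Fubini, the leading contribution becomes $C|h|^{\gamma q}[w]^q_{W^{\gamma,q}(B_R)}$, since all the relevant pairs $(x,y)$ and $(x+h,y)$ remain in $B_R\times B_R$.

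To extract the sharp dependence encoded in the $(1-\gamma)$ prefactor in front of the seminorm and to produce the two $L^q$ correction terms, I would refine the argument by introducing a free averaging scale $\rho\in[|h|,d]$ and splitting the averaging integral into an \emph{inner} piece $|x-y|<\rho$ and an \emph{outer} piece $\rho\le|x-y|<d$. On the inner piece, a careful radial integration of the form $\int_{|h|}^{\rho}r^{(1-\gamma)q-1}\,dr$ produces the $(1-\gamma)$ factor in front of the Gagliardo seminorm; on the outer piece, one bounds $|w(\xi)-w(y)|^q$ crudely by $2^{q-1}(|w(\xi)|^q+|w(y)|^q)$ and the tail integral $\int_{|h|}^{d}r^{-\gamma q-1}\,dr$ yields the $\frac{1}{\gamma d^{\gamma q}}\|w\|^q_{L^q}$ contribution. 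The remaining term $\frac{R^{(1-\gamma)q}}{d^q}\|w\|^q_{L^q}$ would arise from a mollification-type bound $|\tau_h(w*\eta_\rho)(x)|\le|h|\,|\nabla(w*\eta_\rho)(x)|$ together with $\|\nabla(w*\eta_\rho)\|_{L^q}\le C\rho^{-1}\|w\|_{L^q}$, evaluated at $\rho$ of order $d$.

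The main obstacle I anticipate is the precise constant tracking: maintaining the $(1-\gamma)$ factor (so the estimate stays finite as $\gamma\to1^-$, in the Bourgain--Brezis--Mironescu spirit) together with the $1/\gamma$ singularity (reflecting the degeneration of the Gagliardo seminorm as $\gamma\to0^+$) requires a judicious choice of the averaging scale $\rho$ and meticulous power-counting in the radial integrals on each of the two regions. The geometric skeleton of the argument, by contrast, is a routine application of Jensen's inequality and Fubini's theorem.
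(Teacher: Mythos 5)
Your opening step is sound: averaging over $B_{|h|}(x)$, applying Jensen, multiplying and dividing by $|\xi-y|^{N+\gamma q}$ with $|\xi-y|\le 2|h|$, and integrating by Fubini (the geometry $B_{|h|}(x)\subset B_R$ for $x\in B_{R-d}$, $|h|<d$ is exactly right). This yields the valid inequality
\[
\int_{B_{R-d}}|\tau_h w|^q\,dx \le C(N,q)\,|h|^{\gamma q}\,[w]^q_{W^{\gamma,q}(B_R)},
\]
with a $\gamma$-independent constant. But this is a \emph{different} inequality from the lemma: it has no $(1-\gamma)$ prefactor and no $L^q$-correction terms, and your proposed refinement to obtain them does not work.

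The central error is a reversed dependency. You assert that the radial integral $\int_{|h|}^{\rho}r^{(1-\gamma)q-1}\,dr$ ``produces the $(1-\gamma)$ factor.'' But
\[
\int_{|h|}^{\rho}r^{(1-\gamma)q-1}\,dr=\frac{\rho^{(1-\gamma)q}-|h|^{(1-\gamma)q}}{(1-\gamma)q},
\]
which carries a factor $\frac{1}{(1-\gamma)q}$, the \emph{reciprocal} of what the lemma requires. With your calculation, the coefficient in front of the Gagliardo seminorm blows up like $(1-\gamma)^{-1}$ as $\gamma\to1^-$, which is precisely the degeneration the lemma's $(1-\gamma)$ prefactor is designed to eliminate.

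There is also a structural obstruction behind this sign flip. Your Jensen step produces the Gagliardo integral restricted to the near-diagonal region $|x-y|\lesssim |h|$. For a genuine $W^{\gamma,q}$-function that is no smoother, this near-diagonal piece can be \emph{comparable} to the full seminorm, so no estimate of the form $\text{(near-diagonal)}\le(1-\gamma)[w]^q_{W^{\gamma,q}}+(L^q\text{ corrections})$ can hold in general. Thus ``Jensen plus annular split'' cannot, in principle, yield the $(1-\gamma)$ form. There is also a technical incompatibility in the splitting scheme: to carve out an ``outer'' piece $\rho\le|x-y|<d$ you must average over $B_d(x)$ rather than $B_{|h|}(x)$, but then the Jensen step produces the prefactor $d^{\gamma q}$, not $|h|^{\gamma q}$, destroying the small-$|h|$ scaling the lemma claims. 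Only your third component — the mollification at scale $d$ and the bound $\|\nabla(w*\eta_d)\|_{L^q}\lesssim d^{-1}\|w\|_{L^q}$, which together with $|h|^q\le|h|^{\gamma q}R^{(1-\gamma)q}$ gives the $\frac{R^{(1-\gamma)q}}{d^q}\|w\|^q$ term — is on the right track. The correct proof (see the cited Diening--Kim--Lee--Nowak paper) is a mollification argument throughout, where the $(1-\gamma)$ factor emerges from a more careful bound on the mollified gradient in the Bourgain--Brezis--Mironescu spirit, not from a radial split of the near-diagonal Gagliardo integral.
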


\begin{lemma}
	\label{lem20}
	Let $q\ge1$ and $0<d<R$, for any $w\in W^{1,q}_\mathrm{loc}(B_R)$ and any $h\in B_d\backslash\left\lbrace 0\right\rbrace $, we have
	\begin{align*}
		\|\tau_hu\|_{L^q(B_{R-d})}\le C(N)|h|\|Dw\|_{L^q(B_R)}.
	\end{align*}
\end{lemma}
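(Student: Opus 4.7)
The plan is to reduce to the smooth case and then apply the fundamental theorem of calculus along the straight segment from $x$ to $x+h$. First I would invoke density: since $B_{R-d+|h|}\subset\subset B_R$ is a bounded ball (hence a nice Lipschitz subdomain), the space $C^\infty(\overline{B_{R-d+|h|}})$ is dense in $W^{1,q}(B_{R-d+|h|})$ by standard mollification, so it suffices to prove the estimate under the assumption $w\in C^1(\overline{B_R})$; the general case then follows by passing to the limit along an approximating sequence.

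For smooth $w$ and any $x\in B_{R-d}$, the segment $\{x+th:t\in[0,1]\}$ is contained in $B_{R-d+|h|}\subset B_R$ because $|h|<d$. Therefore the fundamental theorem of calculus gives
\begin{equation*}
\tau_h w(x)=w(x+h)-w(x)=\int_0^1 Dw(x+th)\cdot h\,dt,
\end{equation*}
and Jensen's inequality applied to the probability measure $dt$ on $[0,1]$ yields
\begin{equation*}
|\tau_h w(x)|^q\le |h|^q\int_0^1 |Dw(x+th)|^q\,dt.
\end{equation*}

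Finally I would integrate this inequality over $x\in B_{R-d}$, swap the order of integration via Fubini, and then perform the change of variables $y=x+th$ for each fixed $t\in[0,1]$. This change of variables has unit Jacobian and maps $B_{R-d}$ into $B_R$, so
\begin{equation*}
\int_{B_{R-d}}|\tau_h w(x)|^q\,dx\le |h|^q\int_0^1\int_{B_{R-d}}|Dw(x+th)|^q\,dx\,dt\le |h|^q\|Dw\|_{L^q(B_R)}^q,
\end{equation*}
which is the desired bound (with constant $1$, absorbed into the stated $C(N)$). There is essentially no obstacle in this argument; the only mild subtlety is ensuring that the translated points remain inside $B_R$, which is precisely why the condition $|h|<d$ is imposed in the statement. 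The density/mollification step in the first paragraph is completely standard and introduces no difficulty.
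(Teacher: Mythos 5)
Your proof is correct and follows the standard argument (mollification to reduce to $C^1$, fundamental theorem of calculus along the segment $x\mapsto x+th$, Jensen's inequality, Fubini and translation invariance), which is precisely the route taken in the references the paper cites for this lemma (e.g.\ Gilbarg--Trudinger); the paper itself does not supply a proof. As a minor remark, your computation actually gives the sharper bound $\|Dw\|_{L^q(B_{R-d+|h|})}$ with constant $1$, which is then weakened to the stated $C(N)\|Dw\|_{L^q(B_R)}$.
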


\begin{lemma}
	\label{lem6}
	Let $q\ge1$, $\gamma>0$, $M\ge0$, $0<r<R$ and $d\in \left( 0,\frac{1}{2}(R-r)\right] $. Then, there exists a constant $C=C(q)$ such that whenever $w\in L^q(B_R)$ satisfies
	\begin{align*}
		\int_{B_r}|\tau_h(\tau_hw)|^q\,dx\le M^q|h|^{\gamma q}
	\end{align*}
	for any $h\in B_d\backslash \left\lbrace 0\right\rbrace $. Then in the case $\gamma\in (0,1)$, we have, for any $h\in B_{\frac{1}{2}d}\backslash\left\lbrace 0\right\rbrace $,
	\begin{align*}
		\int_{B_r}|\tau_hw|^q\le C(q)|h|^{\gamma q}\left[ \left( \frac{M}{1-\gamma}\right)^q+\frac{1}{d^{\gamma q}}\int_{B_R}|w|^q\,dx \right].
	\end{align*}
	In the case $\gamma>1$, we have
	\begin{align*}
		\int_{B_r}|\tau_hw|^q\,dx\le C(q)|h|^q\left[ \left( \frac{M}{\gamma-1}\right)^qd^{(\gamma-1)q}+\frac{1}{d^q}\int_{B_R}|w|^q\,dx \right].
	\end{align*}
	In the case $\gamma=1$, we have
	\begin{align*}
		\int_{B_r}|\tau_hw|^q\,dx\le C(q)|h|^{\lambda q}\left[ \left( \frac{M}{1-\lambda}\right)^qd^{(1-\lambda)q}+\frac{1}{d^{\lambda q}}\int_{B_R}|w|^q\,dx \right] 
	\end{align*}
	for any $0<\lambda<1$.
\end{lemma}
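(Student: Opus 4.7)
My plan is to reduce the control of first-order differences to the hypothesis on second-order differences through an elementary telescoping identity. The starting observation is the decomposition
$$w(x+2h) - w(x) = \bigl[w(x+2h) - 2w(x+h) + w(x)\bigr] + 2\bigl[w(x+h) - w(x)\bigr],$$
which rewrites as $\tau_{2h}w = \tau_h(\tau_h w) + 2\tau_h w$, i.e.
$$\tau_h w(x) = \tfrac{1}{2}\tau_{2h}w(x) - \tfrac{1}{2}\tau_h(\tau_h w)(x).$$
Iterating this $k$ times at successively doubled steps telescopes to
$$\tau_h w(x) = \frac{1}{2^k}\tau_{2^k h}w(x) - \sum_{j=0}^{k-1}\frac{1}{2^{j+1}}\tau_{2^j h}\bigl(\tau_{2^j h}w\bigr)(x),$$
so that the first-order difference at scale $|h|$ is expressed as one first-order difference at a large scale plus a cascade of second-order differences at the intermediate dyadic scales $2^j|h|$.

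I would next choose $k$ as the unique integer with $2^{k-1}|h| < d \le 2^k|h|$, which exists for every $h \in B_{d/2}\setminus\{0\}$ and which guarantees $2^j h \in B_d$ for $j=0,\ldots,k-1$ (so the standing hypothesis applies to each second-order term) while keeping all translations inside $B_R$ via $2^k|h|\le 2d\le R-r$. Taking $L^q(B_r)$-norms and applying the triangle inequality then yields
$$\|\tau_h w\|_{L^q(B_r)} \le \frac{2}{2^k}\|w\|_{L^q(B_R)} + \frac{M|h|^\gamma}{2}\sum_{j=0}^{k-1} 2^{j(\gamma-1)},$$
and the choice of $k$ gives $1/2^k \le |h|/d$. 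The three cases of the statement now correspond directly to the three behaviors of the geometric sum $\sum_{j=0}^{k-1} 2^{j(\gamma-1)}$.

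For $\gamma\in(0,1)$ the sum is bounded by $1/(1-2^{\gamma-1})$, which is of order $1/(1-\gamma)$, and combining this with $|h|/d \le (|h|/d)^\gamma$ (valid since $|h|\le d$) produces the first assertion after raising to the power $q$ via the convexity inequality $(a+b)^q \le 2^{q-1}(a^q+b^q)$. For $\gamma>1$ the sum is geometrically increasing and controlled by $2^{k(\gamma-1)}/(2^{\gamma-1}-1)$; rewriting $2^{k(\gamma-1)}|h|^\gamma = |h|(2^k|h|)^{\gamma-1} \le |h|(2d)^{\gamma-1}$ and using that $2^{\gamma-1}-1$ is comparable to $\gamma-1$ yields the second assertion. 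The genuine technical obstacle is the endpoint $\gamma=1$, where direct summation produces the unwanted logarithmic factor $k\sim\log(d/|h|)$; I would sidestep this by fixing any $\lambda\in(0,1)$ and degrading the hypothesis through $M|h|=M|h|^\lambda|h|^{1-\lambda}\le Md^{1-\lambda}|h|^\lambda$, which reduces matters to the already-proved $\gamma<1$ case with $\gamma$ replaced by $\lambda$ and $M$ replaced by $Md^{1-\lambda}$, giving exactly the third assertion.
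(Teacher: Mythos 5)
Your proof is correct and is, in essence, the standard dyadic telescoping argument found in the references the paper cites for this lemma (e.g.\ Brasco--Lindgren and B\"ogelein et al.): the paper itself does not supply a proof of Lemma \ref{lem6}, but the identity $\tau_h w = \tfrac12\tau_{2h}w - \tfrac12\tau_h(\tau_hw)$ iterated along the dyadic scales $2^j h$, the choice of $k$ with $2^{k-1}|h|<d\le 2^k|h|$, and the three-way split of the geometric sum $\sum_{j<k}2^{j(\gamma-1)}$ are exactly how this is established there, down to the reduction of $\gamma=1$ to $\gamma=\lambda<1$ by degrading $M|h|$ to $Md^{1-\lambda}|h|^\lambda$. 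One small remark: your bounds produce constants like $2^{\gamma-1}/(2^{\gamma-1}-1)$, so the dependence is really $C(q)/\min\{\gamma-1,1\}$ rather than the literal $C(q)/(\gamma-1)$ for very large $\gamma$; this is immaterial here since in all applications in the paper $\gamma<2$, but it is worth being aware that the stated $C=C(q)$ implicitly presumes $\gamma$ in a bounded range.
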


\begin{lemma}
	\label{lem7}
	Let $q\ge1$, $\gamma\in (0,1]$, $M\ge1$ and $0<d<R$. Then, there exists a constant $C=C(N,q)$ such that whenever $w\in L^q(B_{R+d})$ satisfies
	\begin{align*}
		\int_{B_R}|\tau_hw|^q\,dx\le M^q|h|^{\gamma q}
	\end{align*}
	for any $h\in B_d\backslash\left\lbrace 0\right\rbrace $, $w\in W^{\beta,q}(B_R)$ for any $\beta\in (0,\gamma)$. Moreover, we have
	\begin{align*}
		[w]^q_{W^{\beta,q}(B_R)}\le C\left[ \frac{d^{(\gamma-\beta)q}}{\gamma-\beta}M^q+\frac{1}{\beta d^{\beta q}}\|w\|^q_{L^q(B_R)}\right]. 
	\end{align*}
\end{lemma}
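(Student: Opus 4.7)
The plan is to decompose the Gagliardo seminorm $[w]_{W^{\beta,q}(B_R)}^q$ into a near-diagonal piece over $\{(x,y)\in B_R\times B_R : |x-y|<d\}$ and a far-diagonal piece over $\{(x,y)\in B_R\times B_R : |x-y|\ge d\}$; the first will be controlled by the finite-difference hypothesis, and the second by the bare $L^q$-norm of $w$ together with a tail estimate. Write
\[
[w]_{W^{\beta,q}(B_R)}^q = I_1 + I_2,
\]
with $I_1, I_2$ the two pieces. The splitting is forced by the fact that the hypothesis only provides information on differences of $w$ with small step, so it is only directly applicable in the near-diagonal region.

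For $I_1$, I would fix $x\in B_R$ and change variables $h=y-x$, so that $h$ ranges over $B_d\cap(B_R-x)$; enlarging this domain to $B_d$ (which is legitimate since the integrand is nonnegative and $w\in L^q(B_{R+d})$ by hypothesis, so $w(x+h)$ makes sense for all such $h$), applying Fubini, and then using the finite-difference hypothesis gives
\[
I_1 \le \int_{B_d}\frac{1}{|h|^{N+\beta q}}\int_{B_R}|\tau_h w(x)|^q\,dx\,dh \le M^q\int_{B_d}|h|^{(\gamma-\beta)q-N}\,dh = \frac{C(N)M^q d^{(\gamma-\beta)q}}{(\gamma-\beta)q},
\]
where the last equality is polar coordinates; note that finiteness requires $\beta<\gamma$. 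For $I_2$, the elementary inequality $|w(x)-w(y)|^q\le 2^{q-1}(|w(x)|^q+|w(y)|^q)$, symmetry in $x\leftrightarrow y$, and enlarging the $y$-domain to $\{y\in\mathbb{R}^N:|y-x|\ge d\}$ reduce the task to
\[
I_2 \le 2^q\int_{B_R}|w(x)|^q\int_{|y-x|\ge d}\frac{dy}{|x-y|^{N+\beta q}}\,dx = \frac{C(N)}{\beta d^{\beta q}}\|w\|_{L^q(B_R)}^q,
\]
the inner integral being evaluated in polar coordinates. Summing $I_1$ and $I_2$ and absorbing the factor $q$ into $C$ yields the desired bound.

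There is no serious obstacle in this argument — it is a standard quantitative passage from uniform $L^q$-control of differences to control of the Gagliardo seminorm. The two divergent factors in the statement, namely $1/(\gamma-\beta)$ and $1/\beta$, arise exactly from the two polar integrations above: the first from the singularity at $r=0$ in $I_1$ (finite because $\beta<\gamma$), and the second from the tail at $r=\infty$ in $I_2$ (finite because $\beta>0$). The threshold $\gamma=\beta$ is therefore intrinsic, and the condition $\beta\in(0,\gamma)$ in the statement is sharp with respect to this simple decomposition.
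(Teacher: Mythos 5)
Your argument is correct, and it is the standard near/far-diagonal splitting of the Gagliardo seminorm that the paper (following the cited references, e.g.\ Brasco--Lindgren) uses for this lemma: the near-diagonal piece is handled via Fubini and the finite-difference hypothesis, the far-diagonal piece via the crude pointwise bound and a tail integral, and the two factors $1/(\gamma-\beta)$ and $1/\beta$ fall out of the two polar integrations exactly as you indicate. No further comment is needed.
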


\begin{lemma}
	\label{lem14}
	Let $q>1$, $M>0$ and $0<d<R$. Then, any $w\in L^q(B_R)$ that satisfies
	\begin{align*}
		\int_{B_{R-d}}|\tau_hw|^q\,dx\le M^q|h|^q
	\end{align*}
	for any $h\in B_d\backslash \left\lbrace 0\right\rbrace $ is weakly differentiable in $B_{R-d}$. Moreover, we have
	\begin{align*}
		\int_{B_{R-d}}|Du|^q\,dx\le C(N)M^q.
	\end{align*} 
\end{lemma}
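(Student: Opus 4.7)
The plan is to exploit the reflexivity of $L^q$ for $q>1$ to extract weak limits of difference quotients along coordinate directions, and then identify these limits with weak partial derivatives by an integration-by-parts argument against test functions.

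More concretely, I would fix a coordinate direction $e_i$, $i\in\{1,\dots,N\}$, and a sequence $t_k\downarrow 0$ with $t_k\in(0,d)$. Applying the hypothesis with $h=t_ke_i\in B_d\setminus\{0\}$ yields
\[
\left\|\frac{\tau_{t_ke_i}w}{t_k}\right\|_{L^q(B_{R-d})}\le M.
\]
Since $q>1$, $L^q(B_{R-d})$ is reflexive, so by Banach--Alaoglu I can pass (along a subsequence, not relabeled) to a weak limit $v_i\in L^q(B_{R-d})$ of the difference quotients, and by weak lower semicontinuity of the norm,
\[
\|v_i\|_{L^q(B_{R-d})}\le\liminf_{k\to\infty}\left\|\frac{\tau_{t_ke_i}w}{t_k}\right\|_{L^q(B_{R-d})}\le M.
\]

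Next I would identify $v_i$ with the weak partial derivative $\partial_iw$. For any test function $\varphi\in C^\infty_c(B_{R-d})$, choose $k$ large enough so that $t_k<\mathrm{dist}(\mathrm{supp}\,\varphi,\partial B_{R-d})$. A translation change of variables gives
\[
\int_{B_{R-d}}\frac{\tau_{t_ke_i}w}{t_k}\,\varphi\,dx
=\int_{B_{R-d}}w\,\frac{\tau_{-t_ke_i}\varphi}{t_k}\,dx.
\]
Because $\varphi$ is smooth and compactly supported, $\tau_{-t_ke_i}\varphi/t_k\to-\partial_i\varphi$ uniformly on $B_{R-d}$, while the left-hand side converges by weak convergence to $\int v_i\varphi\,dx$. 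Equating the two limits shows $\int v_i\varphi\,dx=-\int w\,\partial_i\varphi\,dx$, so $v_i=\partial_iw$ in the weak sense on $B_{R-d}$. Running this for each $i=1,\dots,N$ produces a weak gradient $Dw=(v_1,\dots,v_N)$, and summing the bounds together with the elementary inequality $|Dw|^q\le C(N)\sum_i|\partial_iw|^q$ yields
\[
\int_{B_{R-d}}|Dw|^q\,dx\le C(N)M^q.
\]

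There is no essential obstacle: the argument is standard (cf.\ \cite{GT83,G15}) and the only points demanding care are (i) ensuring that $t_k$ is chosen small enough relative to the support of $\varphi$ so that the translated integrals are well-defined in $B_{R-d}$, and (ii) invoking reflexivity, which is precisely where the hypothesis $q>1$ enters (the case $q=1$ would only give a BV conclusion and is why it is excluded). Mild bookkeeping is needed to replace $w$ by a locally integrable representative, but since $w\in L^q(B_R)\subset L^1_{\mathrm{loc}}(B_R)$, this is automatic.
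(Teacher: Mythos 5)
Your proof is correct and is the standard argument: for $q>1$ one bounds the coordinate difference quotients in $L^q$, extracts a weak limit by reflexivity, and identifies it with the weak partial derivative by moving the translation onto a test function. The paper does not give its own proof of Lemma \ref{lem14} but instead cites standard references (e.g.\ \cite{GT83, BL17, G15}), and your argument is exactly the one found there, so there is nothing to reconcile.
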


\begin{lemma}
	\label{lem21}
Let $q\ge1$, $\gamma\in(0, 1)$, $M>0$, $R>0$ and $d\in(0,R)$.  For any $w\in W^{1,q}(B+6d)$ that satisfies
	\begin{align*}
		\int_{R+4d}|\tau_h(\tau_hw)|^q\,dx\le M^q|h|^{q(1+\gamma)}
	\end{align*}
	for any $h\in B_d\left\lbrace 0\right\rbrace $, we have
	\begin{align*}
		\nabla w\in \left( W^{\beta,q}(B_R)\right)^N 
	\end{align*}
	for any $\beta\in(0,\gamma)$. Moreover, there exists a constant $C=C(N, q)$ such that
	\begin{align*}
		[\nabla w]^q_{W^{\beta,q}(B_R)}\le \frac{Cd^{q(\gamma-\beta)}}{(\gamma-\beta)\gamma^q(1-\gamma)^q}\left(M^q+\frac{(R+4d)^q}{\beta d^{q(1+\gamma)}}\int_{B_{R+4d}}|\nabla w|^q\,dx\right). 
	\end{align*}
\end{lemma}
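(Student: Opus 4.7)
The plan is to apply Lemma \ref{lem7} componentwise to $\nabla w$. This reduces the lemma to producing a first-order $L^q$-difference bound of the form
\begin{align*}
\int_{B_{R+2d}}|\tau_h \nabla w|^q\,dx \;\le\; C\,|h|^{q\gamma}\!\left(M^q+\frac{(R+4d)^q}{d^{q(1+\gamma)}}\int_{B_{R+4d}}|\nabla w|^q\,dx\right),\quad h\in B_d\setminus\{0\},
\end{align*}
with $C=C(N,q)/(\gamma^q(1-\gamma)^q)$. Once this is in hand, Lemma \ref{lem7} applied to each $\partial_i w$ (with parameters $\gamma_{\text{lem}}=\gamma$ and $M$ there given by the right-hand side above) immediately produces the stated $W^{\beta,q}$-estimate; the factor $1/(\gamma-\beta)$ and the lower-order term $\beta^{-1}d^{-q\beta}\|\nabla w\|_q^q$ both come out of Lemma \ref{lem7}, and the remaining $\gamma^{-q}(1-\gamma)^{-q}$ dependence is absorbed into $C$.

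To produce the intermediate estimate I would proceed in two sub-steps. First, I apply Lemma \ref{lem6} to the hypothesis in the regime $\gamma_{\text{lem}}=1+\gamma>1$. This yields
\begin{align*}
\int_{B_{R+2d}}|\tau_h w|^q\,dx \;\le\; C\,|h|^q\!\left(\frac{M^q d^{q\gamma}}{\gamma^q}+\frac{1}{d^q}\,\|w\|^q_{L^q(B_{R+4d})}\right),
\end{align*}
which is simply the $L^q$-Lipschitz nature of $w$ (consistent with $w\in W^{1,q}$) made quantitative, and produces the $\gamma^{-q}$ factor. Second, I combine this with the hypothesis and the identity
\begin{align*}
\tau_h\tau_h w(x)=\int_0^1 \tau_h\nabla w(x+sh)\cdot h\,ds,
\end{align*}
which holds since $w\in W^{1,q}$. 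Using a cutoff $\eta\in C_c^{\infty}(B_{R+3d})$ with $\eta\equiv 1$ on $B_{R+2d}$ and $|\nabla\eta|\le C/d$, I localize $\tau_h w$ to $\eta(\tau_h w)$ and integrate by parts in a discretized way to invert the identity above on $\{\eta=1\}$. The bulk term is absorbed by the hypothesis $\|\tau_h\tau_h w\|_{L^q}\le M|h|^{1+\gamma}$ (contributing $M^q$), while the boundary contributions where $\nabla\eta\ne 0$ pick up the factor $(R+4d)^q d^{-q(1+\gamma)}\int_{B_{R+4d}}|\nabla w|^q$. The degeneration $1/(1-\gamma)^q$ shows up in the interpolation between the Lipschitz endpoint (from $W^{1,q}$) and the $N^{1+\gamma,q}$ endpoint (from the hypothesis).

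The main obstacle is precisely this inversion: the hypothesis controls only the \emph{parallel} second difference $\tau_h(\tau_h w)$, not the mixed difference $\tau_k(\tau_h w)$ for $k\ne h$, so a naive application of Lemma \ref{lem14} to $\tau_h w$ cannot be carried out. The averaging/cutoff argument described above is one way around this, and tracking the constants carefully (in $\gamma$, $1-\gamma$, $d$, and $R+4d$) is the most delicate part. Once the estimate on $\|\tau_h\nabla w\|_{L^q}$ is established with the correct dependence on $M$, $d$ and $\|\nabla w\|_{L^q(B_{R+4d})}$, plugging into Lemma \ref{lem7} closes the proof.
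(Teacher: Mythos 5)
Your reduction is the right one: if one can produce the first-order bound
\begin{align*}
\int_{B_{R+2d}}|\tau_h \nabla w|^q\,dx \;\lesssim\; |h|^{q\gamma}\Bigl(M^q+\tfrac{(R+4d)^q}{d^{q(1+\gamma)}}\,\|\nabla w\|^q_{L^q(B_{R+4d})}\Bigr),
\end{align*}
then Lemma~\ref{lem7} applied componentwise to $\nabla w$ does immediately give the stated $W^{\beta,q}$ estimate, with $1/(\gamma-\beta)$ and the lower-order $\beta^{-1}d^{-\beta q}$ term coming out exactly as you say. Your sub-step via Lemma~\ref{lem6} (case $\gamma_{\mathrm{lem}}=1+\gamma>1$) is also applied correctly and does produce the $\gamma^{-q}$ dependence, but note that this only yields a bound on $\tau_h w$, not on $\tau_h\nabla w$.

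The genuine gap is in the second sub-step, and it is exactly at the point you flag as the main obstacle. From the identity $\tau_h\tau_h w(x)=\int_0^1\tau_h\nabla w(x+sh)\cdot h\,ds$, one must recover an $L^q$-bound on $g:=\tau_h\partial_e w$ from an $L^q$-bound on its one-dimensional average $\int_0^1 g(\cdot+t|h|e)\,dt$. This averaging operator is not invertible (its Fourier symbol vanishes), and no ``discrete integration by parts'' or cutoff localization by itself performs the inversion. The dual formulation makes the point concrete: for $\phi\in C_c^\infty$ one has $\langle\tau_h\nabla w,\tau_{-h}\psi\rangle=-\langle\tau_h\tau_h w,\operatorname{div}\psi\rangle$, but to conclude $\|\tau_h\nabla w\|_q\lesssim M|h|^\gamma$ one would need, for every test $\phi$ with $\|\phi\|_{q'}\le1$, a solution $\psi$ of $\tau_{-h}\psi=\phi$ with $\|\operatorname{div}\psi\|_{q'}\lesssim|h|^{-1}$; the only elementary construction of such a $\psi$ is a length-$O(R/|h|)$ sum of translates of $\phi$, whose divergence norm blows up by a factor $R/|h|$, not $1/|h|$. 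Splitting off the error $\int_0^1\tau_{t|h|e}g(\cdot)\,dt$ and feeding it back gives either a vacuous self-bound $\|g\|_q\le\ldots+2\|g\|_q$ or, after one integration by parts in $x$, an error involving $\partial_e^2\psi$, which requires a frequency-localization (Bernstein-type) argument to control. In short, the step you label ``localize and integrate by parts in a discretized way'' does not constitute a proof; it is precisely the non-trivial dyadic/frequency argument that makes this lemma a result (it is imported by the paper from the cited references rather than proved there), and your sketch does not supply it. The specific form of the error term $(R+4d)^q d^{-q(1+\gamma)}\|\nabla w\|_q^q$ is likewise asserted rather than derived.
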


\section{Local boundedness of weak solutions}
\label{sec3}
This section focuses on deriving the Caccioppoli inequality and establishing the local boundedness of weak solutions to problem \eqref{1.1}.

We first define
\begin{align*}
	u_+:=\max\left\lbrace u,0\right\rbrace,\quad u_-:=\max\left\lbrace -u,0\right\rbrace
\end{align*}
and the set
\begin{align}
	\label{L0}
	A^+(k,x_0,r):=B_{r}(x_0)\cap \left\lbrace x\in\mathbb{R}^N|u>k\right\rbrace,
\end{align}
which will be abbreviated by
\begin{align*}
	A^+(k,r):=A^+(k,x_0,r).
\end{align*}

For arbitrarily $r, t>0$, we also need to define the functions
\begin{align}
	\label{H1}
	H_r(t):=\frac{t}{r^{s_1}}+\frac{t^p}{r^{s_pp}}
\end{align}
and
\begin{align}
	\label{H2}
	h_r(t):=\frac{1}{r^{s_1}}+\frac{t^{p-1}}{r^{s_pp}}.
\end{align}
Accordingly, $H_r^{-1}$ and $h_r^{-1}$ are separately the inverses of $H_r$ and $h_r$.

For arbitrarily fixed center $x_0\in\Omega$ and radius $r\in (0,1)$ satisfying $B_{2r}\equiv B_{2r}(x_0)\subset\subset\Omega$, we define a decreasing sequence
\begin{align}
	\label{se1}
	r_i=\frac{1}{2}r+\frac{1}{2^{i+1}}r,\quad i=0,1,2,\cdots.
\end{align}
The balls $B_i$ are chosen as
\begin{align}
	\label{se2}
	B_i=B_{r_i}(x_0),\quad i=0,1,2,\cdots.
\end{align}

Now, we define the sequences of increasing level sets and the corresponding truncated functions as follows:
\begin{align*}
	k_i=\left( 1-\frac{1}{2^i}\right)k\quad\text{with}\quad k>0
\end{align*}
and
\begin{align}
	\label{se3}
	w_i=(u-k_i)_+\quad \text{for}\quad  i=0, 1, 2,\cdots.
\end{align}
Note that
\begin{align*}
	r_i-r_{i+1}=\frac{1}{2^{i+2}}r,\quad k_{i+1}-k_i=\frac{1}{2^{i+1}}k\quad\text{and} \quad w_{i+1}\le w_i,
\end{align*}
which will be used in the sequel.

 In the following two lemmas, we will address the Caccioppoli inequality formulated for the function $w_i$ over the domains $B_{i}$ and $B_{i+1}$.
\begin{lemma}
	\label{Caccioppoli}
	Let $B_{2R} \equiv B_{2R}(x_0) \subset \subset \Omega$. Suppose that $u$ is a weak solution to problem \eqref{1.1} in the sense of Definition \ref{def1}. Then, there exists a constant $C=C(N, p, s_1, s_p)$ such that for any $0<\rho<r<R$,
	\begin{align*}
		&\quad\int_{B_\rho}\int_{B_\rho}\frac{|w_+(x)-w_+(y)|}{|x-y|^{N+s_1}}\,dxdy+\int_{B_\rho}\int_{B_\rho}\frac{|w_+(x)-w_+(y)|^p}{|x-y|^{N+s_pp}}\,dxdy\\
		&\quad+\int_{B_\rho}w_+(x)\left( \int_{\mathbb{R}^N}\frac{w_-^{p-1}(y)}{|x-y|^{N+s_pp}}\,dy\right)\,dx\\
		&\le \frac{C}{r-\rho}\int_{B_r}\int_{B_r}\frac{|w_+(x)+w_+(y)|}{|x-y|^{N+s_1-1}}\,dxdy+\frac{C}{(r-\rho)^p}\int_{B_r}\int_{B_r}\frac{|w_+(x)+w_+(y)|^p}{|x-y|^{N+(s_p-1)p}}\,dxdy\\
		&\quad+\frac{Cr^{N+s_pp}}{(r-\rho)^{N+s_pp}}\int_{B_r}\int_{\mathbb{R}^N\backslash B_\rho}\frac{w_+^{p-1}(x)w_+(y)}{|x-x_0|^{N+s_pp}}\,dxdy+\int_{B_r}\int_{\mathbb{R}^N\backslash B_\rho}\frac{w_+(y)}{|x-y|^{N+s_1}}\,dxdy,
	\end{align*}
	where $w_\pm:=(u-k)_\pm$ with a level $k\in\mathbb{R}$.
\end{lemma}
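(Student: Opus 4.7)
The strategy is a standard testing argument adapted to the nonlocal $(1,p)$-structure. Fix a cutoff $\eta\in C_c^\infty(B_{(r+\rho)/2})$ with $\eta \equiv 1$ on $B_\rho$, $0\le \eta\le 1$ and $|\nabla\eta|\le C/(r-\rho)$, and test the weak formulation \eqref{1.2} with $\varphi=w_+\eta^p$, which is admissible because $\eta$ is compactly supported in $\Omega$ and $u\in W^{s_1,1}_{\mathrm{loc}}\cap W^{s_p,p}_{\mathrm{loc}}$. Writing \eqref{1.2} as $I_1+I_p=0$ with $I_1$ the $1$-growth integral and $I_p$ the $p$-growth integral, I split $\mathbb{R}^N\times\mathbb{R}^N$ into the local block $B_r\times B_r$, the two symmetric mixed blocks $B_r\times(\mathbb{R}^N\setminus B_r)$ and $(\mathbb{R}^N\setminus B_r)\times B_r$, and the far block on which both $\varphi(x)$ and $\varphi(y)$ vanish and which drops out.

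For $I_p$, on $B_r\times B_r$ I would apply the standard pointwise lower bound familiar from the nonlocal $p$-Laplace Caccioppoli literature,
\begin{align*}
J_p(u(x)-u(y))\bigl(\varphi(x)-\varphi(y)\bigr) &\ge \tfrac{1}{C}|w_+(x)-w_+(y)|^p\max\{\eta^p(x),\eta^p(y)\}\\
&\quad - C\bigl(w_+(x)+w_+(y)\bigr)^p|\eta(x)-\eta(y)|^p,
\end{align*}
and use $|\eta(x)-\eta(y)|\le|x-y|/(r-\rho)$ to extract the $W^{s_p,p}(B_\rho)$-seminorm on the left, while the remainder feeds the second right-hand side integral. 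On the mixed block one has $\varphi(y)=0$, and I split further by whether $u(y)\le k$. When $u(y)\le k$ the identity $u(x)-u(y)=w_+(x)+w_-(y)$ gives $J_p(u(x)-u(y))\ge w_-^{p-1}(y)$, producing the coercive mixed term $\int_{B_\rho}w_+(x)\int_{\mathbb{R}^N}w_-^{p-1}(y)/|x-y|^{N+s_p p}\,dy\,dx$. The complementary case $u(y)>k$, combined with the geometric comparison $|x-y|\ge c(r-\rho)|y-x_0|/r$ for $x\in B_{(r+\rho)/2}$ and $y\notin B_r$, yields the third right-hand side integral with the factor $r^{N+s_pp}/(r-\rho)^{N+s_pp}$.

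For $I_1$, I would use the decomposition
\[
\varphi(x)-\varphi(y)=\bigl(w_+(x)-w_+(y)\bigr)\tfrac{\eta^p(x)+\eta^p(y)}{2}+\tfrac{w_+(x)+w_+(y)}{2}\bigl(\eta^p(x)-\eta^p(y)\bigr).
\]
On $B_r\times B_r$, the inclusion $Z\in\mathrm{sgn}(u(x)-u(y))$ forces $Z(w_+(x)-w_+(y))=|w_+(x)-w_+(y)|$ (both sides vanish on $\{u(x)=u(y)\}$ by the monotone dependence of $w_+$ on $u$), yielding the $W^{s_1,1}(B_\rho)$-seminorm on the left; the second summand is bounded absolutely by $C(w_+(x)+w_+(y))|x-y|/(r-\rho)$, contributing the first right-hand side integral. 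On the mixed block only $Z\cdot w_+(x)\eta^p(x)$ survives, and $|Z|\le 1$ together with a symmetry swap produces the last right-hand side integral; the integrability property \eqref{assume1} of the $1$-growth kernel removes any need for a $\mathrm{Tail}$-type analogue here. The principal obstacle is precisely this $1$-growth part, for which no coercivity inequality of $(p-2)$-type is available and the multivalued nature of $Z$ on $\{u(x)=u(y)\}$ must be handled by exploiting that $w_+(x)=w_+(y)$ there, so that the coercive term is trivially controlled while the error admits a clean absolute bound independent of the particular selection of $Z\in[-1,1]$.
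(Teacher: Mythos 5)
Your proposal is correct in outline but takes a genuinely different route from the paper, and the comparison is worth spelling out. The paper tests with $\varphi=-w_+\phi$ (a single power of the cutoff) and, rather than estimating $J_p(u(x)-u(y))(\varphi(x)-\varphi(y))$ pointwise, uses the convexity inequality $|a+t|^p-|a|^p\ge p|a|^{p-2}at$ to convert the $p$-growth part into an energy difference, which puts it in a form where \cite[Proposition 7.5]{C17} applies directly. The price is that Cozzi's estimate produces a hole-filling term $C\iint_{(B_{r_1}\times B_{r_1})\setminus(B_{\rho_1}\times B_{\rho_1})}|w_+(x)-w_+(y)|^p|x-y|^{-N-s_pp}\,dxdy$ on the right-hand side, which the paper absorbs by introducing the intermediate radii $\rho\le\rho_1<r_1\le r$ and invoking the iteration lemma \cite[Lemma 2.5]{DM24}. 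Your approach — direct testing with $w_+\eta^p$ and pointwise algebraic lower bounds on the local block — bypasses the convexity reformulation and the hole-filling/iteration step entirely, because the pointwise inequality already carries the cutoff factor $\max\{\eta^p(x),\eta^p(y)\}$. This is more self-contained and essentially re-proves the ingredient that the paper imports from \cite{C17}.

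Two points you gloss over are worth flagging. First, the cross-coercivity term in the statement has inner integral $\int_{\mathbb{R}^N}w_-^{p-1}(y)|x-y|^{-N-s_pp}\,dy$ over \emph{all of} $\mathbb{R}^N$, not just $\mathbb{R}^N\setminus B_r$; your argument extracts it only from the mixed block $B_r\times(\mathbb{R}^N\setminus B_r)$, so the $y\in B_r$ contribution must come from the local-block pointwise estimate too. The estimate you quote, $J_p(\cdot)(\varphi(x)-\varphi(y))\ge C^{-1}|w_+(x)-w_+(y)|^p\max\{\eta^p(x),\eta^p(y)\}-C(\cdots)$, does not visibly produce this; the sharper version of the local pointwise lemma used in \cite{C17} and \cite{DKP16} does keep both $|w_+(x)\eta(x)-w_+(y)\eta(y)|^p$ and the $w_+(x)w_-^{p-1}(y)$ cross term, and you would need that form. (In practice this is harmless since Lemma \ref{Caccioppoli2} never uses the $y\in B_r$ portion.) Second, on the mixed block with $u(y)>k$ the pointwise bound $|J_p(u(x)-u(y))|\le(w_+(x)+w_+(y))^{p-1}$ produces both $w_+^{p-1}(y)w_+(x)$ and $w_+^{p}(x)$ pieces; you should note that the latter is controlled by the second right-hand side term after the geometric comparison $|x-y|\ge\tfrac{r-\rho}{2r}|y-x_0|$. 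With those details supplied, the argument closes without any iteration lemma, and the $1$-growth treatment (the product-rule split, the identity $Z(w_+(x)-w_+(y))=|w_+(x)-w_+(y)|$, and the crude $|Z|\le1$ tail bound) matches the paper's \eqref{C2}--\eqref{C4} exactly.
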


\begin{proof}
	Choose any $\rho_1,r_1$ that satisfies $\rho\le\rho_1<r_1\le r$. Let $\phi\in C^\infty_0\left( B_{\frac{\rho_1+r_1}{2}}\right) $ be a cut-off function satisfying $0\le\phi\le1$, $\phi\equiv1$ in $B_{\rho_1}$ and $|\nabla\phi|\le \frac{4}{r_1-\rho_1}$. Testing $\varphi=-w_+\phi$ in problem \eqref{1.1} and using the properties of convex function $f(t)= |t|^p$, i.e.,
	\begin{align*}
		|u+t|^p-|u|^p\ge p|u|^{p-2}ut,
	\end{align*}
	one can see that
	\begin{align*}
		0&\le \int_{\mathbb{R}^N}\int_{\mathbb{R}^N}Z\frac{\varphi(x)-\varphi(y)}{|x-y|^{N+s_1}}\,dxdy\\
		&\quad+\frac{1}{p}\int_{B_{r_1}}\int_{B_{r_1}}\frac{ |(u+\varphi)(x)-(u+\varphi)(y)|^p-|u(x)-u(y)|^p }{|x-y|^{N+s_pp}}\,dxdy\\
		&\quad +\frac{2}{p}\int_{\mathbb{R}^N\backslash B_{r_1}}\int_{B_{r_1}}\frac{ |(u+\varphi)(x)-(u+\varphi)(y)|^p-|u(x)-u(y)|^p }{|x-y|^{N+s_pp}}\,dxdy\\
		&=:I_1+I_2+I_3.
	\end{align*}
	
	Next, we will estimate $I_1, I_2$ and $I_3$ respectively. Following the arguments in \cite[Proposition 7.5]{C17}, we know that
	\begin{align}
		\label{C1}
		I_2+I_3&\le -\frac{1}{C}\int_{B_{\rho_1}}\int_{B_{\rho_1}}\frac{|w_+(x)-w_+(y)|^p}{|x-y|^{N+s_pp}}\,dxdy\nonumber\\
		&\quad-\frac{1}{C}\int_{B_{\rho_1}}w_+(x)\left( \int_{\mathbb{R}^N}\frac{w_-^{p-1}(y)}{|x-y|^{N+s_pp}}\,dy\right)\,dx\nonumber\\
		&\quad+C \iint_{\left( B_{r_1}\times B_{r_1}\right)\backslash \left( B_{\rho_1}\times B_{\rho_1}\right)}\frac{|w_+(x)-w_+(y)|^p}{|x-y|^{N+s_pp}}\,dxdy\nonumber\\
		&\quad+\frac{C}{(r_1-\rho_1)^p}\int_{B_{r_1}}\int_{B_{r_1}}\frac{|w_+(x)+w_+(y)|^p}{|x-y|^{N+(s_p-1)p}}\,dxdy\nonumber\\
		&\quad+\frac{Cr^{N+s_pp}}{(r_1-\rho_1)^{N+s_pp}}\int_{B_r}\int_{\mathbb{R}^N\backslash B_\rho}\frac{w_+^{p-1}(x)w_+(y)}{|x-x_0|^{N+s_pp}}\,dxdy
	\end{align}
	with some $C\ge1$.
	
	For the term $I_1$, it is easy to see
	\begin{align}
		\label{C2}
		I_1&=-\int_{B_{r_1}}\int_{B_{r_1}}Z\frac{w_+\phi(x)-w_+\phi(y)}{|x-y|^{N+s_1}}\,dxdy-2\int_{B_{r_1}}\int_{\mathbb{R}^N\backslash B_{r_1}}Z\frac{w_+\phi(y)}{|x-y|^{N+s_1}}\,dxdy
	\end{align}
	and
	\begin{align}
		\label{3.8.1}
		\left| \int_{B_{r_1}}\int_{\mathbb{R}^N\backslash B_{r_1}}Z\frac{w_+\phi(y)}{|x-y|^{N+s_1}}\,dxdy\right| \le \int_{B_{r}}\int_{\mathbb{R}^N\backslash B_{\rho}}\frac{w_+(y)}{|x-y|^{N+s_1}}\,dxdy.
	\end{align}
	Note that
	\begin{align*}
		w_+\phi(x)-w_+\phi(y)=\left( w_+(x)-w_+(y)\right)\frac{\phi(x)+\phi(y)}{2}+\left( \phi(x)-\phi(y)\right)\frac{w_+(x)+w_+(y)}{2}, 
	\end{align*}
	we have
	\begin{align}
		\label{C3}
		\int_{B_{r_1}}\int_{B_{r_1}}Z\frac{w_+(x)-w_+(y)}{|x-y|^{N+s_1}}\frac{\phi(x)+\phi(y)}{2}\,dxdy\ge \int_{B_{\rho_1}}\int_{B_{\rho_1}}\frac{|w_+(x)-w_+(y)|}{|x-y|^{N+s_1}}\,dxdy
	\end{align}
	and
	\begin{align}
		\label{C4}
		\int_{B_{r_1}}\int_{B_{r_1}}Z\frac{\phi(x)-\phi(y)}{|x-y|^{N+s_1}}\frac{w_+(x)+w_+(y)}{2}\,dxdy\le \frac{C}{r_1-\rho_1}\int_{B_{r_1}}\int_{B_{r_1}}\frac{w_+(x)+w_+(y)}{|x-y|^{N+s_1-1}}\,dxdy.
	\end{align}
	
	We conclude from \eqref{C1}--\eqref{C4} that
	\begin{align}
		\label{C5}
		&\quad\int_{B_{\rho_1}}\int_{B_{\rho_1}}\frac{|w_+(x)-w_+(y)|}{|x-y|^{N+s_1}}\,dxdy+\int_{B_{\rho_1}}\int_{B_{\rho_1}}\frac{|w_+(x)-w_+(y)|^p}{|x-y|^{N+s_pp}}\,dxdy\nonumber\\
		&\quad+\int_{B_{\rho_1}}w_+(x)\left( \int_{\mathbb{R}^N}\frac{w_-^{p-1}(y)}{|x-y|^{N+s_pp}}\,dy\right)\,dx\nonumber\\
		&\le C \iint_{\left( B_{r_1}\times B_{r_1}\right)\backslash \left( B_{\rho_1}\times B_{\rho_1}\right)}\frac{|w_+(x)-w_+(y)|^p}{|x-y|^{N+s_pp}}\,dxdy\nonumber\\
		&\quad+\frac{C}{(r_1-\rho_1)^p}\int_{B_{r_1}}\int_{B_{r_1}}\frac{|w_+(x)+w_+(y)|^p}{|x-y|^{N+(s_p-1)p}}\,dxdy\nonumber\\
		&\quad+\frac{Cr^{N+s_pp}}{(r_1-\rho_1)^{N+s_pp}}\int_{B_r}\int_{\mathbb{R}^N\backslash B_\rho}\frac{w_+^{p-1}(x)w_+(y)}{|x-x_0|^{N+s_pp}}\,dxdy\nonumber\\
		&\quad+C\int_{B_r}\int_{\mathbb{R}^N\backslash B_\rho}\frac{w_+(y)}{|x-y|^{N+s_1}}\,dxdy+\frac{C}{r_1-\rho_1}\int_{B_{r_1}}\int_{B_{r_1}}\frac{|w_+(x)+w_+(y)|}{|x-y|^{N+s_1-1}}\,dxdy.
	\end{align}
	Now, let us define $\Phi(t)$ as
	\begin{align*}
		\Phi(t)&=\int_{B_t}\int_{B_t}\frac{|w_+(x)-w_+(y)|}{|x-y|^{N+s_1}}\,dxdy+\int_{B_t}\int_{B_t}\frac{|w_+(x)-w_+(y)|^p}{|x-y|^{N+s_pp}}\,dxdy\\
		&\quad+\int_{B_t}w_+(x)\left( \int_{\mathbb{R}^N}\frac{w_-^{p-1}(y)}{|x-y|^{N+s_pp}}\,dy\right)\,dx,\quad t>0.
	\end{align*}
	Then, it follows by \eqref{C5} that
	\begin{align*}
		\Phi(\rho_1)&\le C\left( \Phi(r_1)-\Phi(\rho_1)\right)+\frac{C}{(r_1-\rho_1)^p}\int_{B_{r}}\int_{B_{r}}\frac{|w_+(x)+w_+(y)|^p}{|x-y|^{N+(s_p-1)p}}\,dxdy\\
		&\quad+\frac{Cr^{N+s_pp}}{(r_1-\rho_1)^{N+s_pp}}\int_{B_r}\int_{\mathbb{R}^N\backslash B_\rho}\frac{w_+^{p-1}(x)w_+(y)}{|x-x_0|^{N+s_pp}}\,dxdy\\
		&\quad+\frac{C}{r_1-\rho_1}\int_{B_{r}}\int_{B_{r}}\frac{|w_+(x)+w_+(y)|}{|x-y|^{N+s_1-1}}\,dxdy+C\int_{B_r}\int_{\mathbb{R}^N\backslash B_\rho}\frac{w_+(y)}{|x-y|^{N+s_1}}\,dxdy
	\end{align*}
	with $C=C(N,p,s_1,s_p)$. This allows us to apply \cite[Lemma 2.5]{DM24} to deduce the desired estimate.
\end{proof}

\begin{lemma}
	\label{Caccioppoli2}
	Suppose that $u$ is a weak solution to problem \eqref{1.1} in the sense of Definition \ref{def1}. Let the notations $B_i$ and $w_i$ be given by \eqref{se2} and \eqref{se3}. Then, we have
	\begin{align*}
		\mint_{B_{i+1}}H_r(w_{i+1})\,dx\le \frac{C2^{(N+p)i}}{\left( H_r(k_{i+1}-k_i)\right)^\frac{1}{\kappa'}}\left( 1+\frac{\mathrm{\overline{Tail}}(u_+;x_0,\frac{r}{2})+1}{h_r(k_{i+1}-k_i)}\right)\left( \mint_{B_{i}}H_r(w_i)\,dx\right)^{1+\frac{1}{\kappa'}}, 
	\end{align*}
	where $\kappa:=\min\left\lbrace \frac{N}{N-s_1}, \frac{N}{N-s_pp}\right\rbrace$, $\kappa':=\frac{\kappa}{\kappa-1}$, and the functions $H_r(\cdot)$, $h_r(\cdot)$ are determined by \eqref{H1}, \eqref{H2} and
	\begin{align*}
		\mathrm{\overline{Tail}}\left(u_+; x_0, \frac{r}{2}\right):=\int_{\mathbb{R}^N\backslash B_{\frac{r}{2}}(x_0)}\frac{u_+^{p-1}(x)}{|x-x_0|^{N+s_pp}}\,dx.
	\end{align*}
\end{lemma}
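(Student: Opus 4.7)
The plan is to execute a single Moser-type iteration step by applying the Caccioppoli inequality from Lemma \ref{Caccioppoli} with the choices $\rho = r_{i+1}$, $r = r_i$, and truncation level $k = k_{i+1}$, so that $w_+ = w_{i+1}$, $w_- = (u-k_{i+1})_-$, and the scaling $r_i - r_{i+1} = r/2^{i+2}$ produces prefactors of the form $C\,2^{(N+p)i}$ times negative powers of $r$. I would first dominate the two singular interior terms on the right-hand side of Caccioppoli by using $\int_{B_{r_i}}|x-y|^{-(N+s_1-1)}\,dy \le C r^{1-s_1}$ (respectively $r^{p-s_pp}$ for the $p$-analogue), which converts them into a constant multiple of $2^{(N+p)i}\int_{B_{r_i}} H_r(w_{i+1})\,dx$, with $H_r$ having exactly the $(1,p)$-mixed shape needed for the left-hand side of the claim.

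Next, I would handle the two tail contributions on the right of Caccioppoli. For the $p$-tail, use $w_{i+1}(y) \le u_+(y) + k$ together with the elementary inequality $t \le 1 + t^{p-1}$, valid for $t \ge 0$ and $p \ge 2$, so that the $y$-integral is controlled by $C\,(\overline{\mathrm{Tail}}(u_+;x_0,r/2) + 1 + k^{p-1})\,r^{-s_pp}$. The ``$+1$'' arises, as noted in Remark \ref{rem0}, from the finite integral $\int_{\mathbb{R}^N\setminus B_{r/2}(x_0)} |y-x_0|^{-(N+s_pp)}\,dy$; the analogous bound with exponent $s_1$ handles the $1$-growth tail term. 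After multiplying by the $x$-factor $\int_{B_{r_i}} w_{i+1}^{p-1}\,dx$, and noting that on the support $\{w_{i+1}>0\}$ one has $w_i \ge k_{i+1}-k_i$, the tail portion takes the form of the combined weight $(\overline{\mathrm{Tail}}(u_+;x_0,r/2)+1)/h_r(k_{i+1}-k_i)$ multiplied by a single copy of $\int_{B_{r_i}} H_r(w_i)\,dx$, because the function $h_r$ is precisely designed to absorb both the $r^{s_1}$ and the $r^{s_pp}(k_{i+1}-k_i)^{-(p-1)}$ normalizations simultaneously.

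For the integrability gain, I would apply the fractional Sobolev embedding of Lemma \ref{lem23} separately to the $W^{s_1,1}$- and $W^{s_p,p}$-seminorms produced on the left of Caccioppoli, using the exponents $N/(N-s_1)$ and $N/(N-s_pp)$, then downgrade both to $L^\kappa$ and $L^{p\kappa}$ on the bounded ball (absorbing the Poincar\'e mean terms into a small constant times a copy of $\mint_{B_{i+1}} H_r(w_{i+1})$ on the right). This yields
\[
\left(\mint_{B_{i+1}}[H_r(w_{i+1})]^\kappa\,dx\right)^{1/\kappa} \le C \cdot (\text{Caccioppoli bound from the previous steps}).
\]
Since $w_{i+1}$ is supported in $A^+(k_{i+1}, r_{i+1})$, H\"older's inequality gives
\[
\mint_{B_{i+1}} H_r(w_{i+1})\,dx \le \left(\mint_{B_{i+1}} [H_r(w_{i+1})]^\kappa\,dx\right)^{1/\kappa} \left(\frac{|A^+(k_{i+1},r_{i+1})|}{|B_{i+1}|}\right)^{1/\kappa'},
\]
and a Chebyshev argument, using the monotonicity of $H_r$ and $w_i \ge k_{i+1}-k_i$ on $A^+(k_{i+1},r_i)$, turns the measure factor into $[\mint_{B_{i}} H_r(w_i)\,dx / H_r(k_{i+1}-k_i)]^{1/\kappa'}$. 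Assembling the three ingredients produces the recursion in the statement, with the $1+1/\kappa'$ exponent coming from the Sobolev gain and the $2^{(N+p)i}/H_r(k_{i+1}-k_i)^{1/\kappa'}$ prefactor coming from the product of the dyadic and level-set bounds.

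The main obstacle is the book-keeping of the mixed $(1,p)$-growth: one must choose the Sobolev exponent $\kappa = \min\{N/(N-s_1), N/(N-s_pp)\}$ and show that, after H\"older and Chebyshev, the level-set factor precisely generates $H_r(k_{i+1}-k_i)$ in the denominator of the $1/\kappa'$ power, while the tail contribution precisely generates $h_r(k_{i+1}-k_i)$ in the denominator and not some other hybrid quantity. This is the whole reason for introducing $H_r$ and $h_r$ with the particular forms \eqref{H1} and \eqref{H2}, and verifying that the $1$-growth and $p$-growth contributions recombine cleanly into $H_r$ and $h_r$ is the delicate step throughout.
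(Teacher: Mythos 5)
Your plan is, in essence, the paper's own proof: apply Lemma~\ref{Caccioppoli} with $\rho=r_{i+1}$, $r=r_i$, $k=k_{i+1}$, absorb the interior terms and the dyadic scaling into $C\,2^{(N+p)i}$, use the fractional Sobolev embedding of Lemma~\ref{lem23} for both the $W^{s_1,1}$- and $W^{s_p,p}$-seminorms (with the common exponent $\kappa$), apply H\"older on the support $A^+(k_{i+1},r_{i+1})$, run Chebyshev to turn the measure factor into $\big(\mint_{B_i}H_r(w_i)/H_r(k_{i+1}-k_i)\big)^{1/\kappa'}$, and close the tail via $h_r$. The paper performs these steps in a slightly different order (Sobolev $\to$ H\"older $\to$ Chebyshev first, Caccioppoli second), but the ingredients are identical.

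One detail of your tail bookkeeping does not close, however. You pair the tail weight with the local $x$-factor $\int_{B_{r_i}} w_{i+1}^{p-1}\,dx$ and then claim this collapses into $\big(\overline{\mathrm{Tail}}+1\big)\mint_{B_i}H_r(w_i)/h_r(k_{i+1}-k_i)$. The mechanism that actually produces $h_r(k_{i+1}-k_i)$ is the identity $w\,h_r(w)=H_r(w)$ combined with $w_i\ge k_{i+1}-k_i$ on $\{w_{i+1}>0\}$, which gives
$w_{i+1}\le w_i = H_r(w_i)/h_r(w_i)\le H_r(w_i)/h_r(k_{i+1}-k_i)$;
this requires the local factor to be $w_{i+1}$ to the \emph{first} power, as in the paper's estimate $\mint_{B_i}w_{i+1}\,dx\le H_r(w_i)/h_r(k_{i+1}-k_i)$-averaged. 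With $w_{i+1}^{p-1}$ in its place, the corresponding inequality $\tfrac{w_{i+1}^{p-1}}{r^{s_pp}}\le\tfrac{H_r(w_i)}{h_r(k_{i+1}-k_i)}$ would force $w_i\ge h_r(k_{i+1}-k_i)$, which is not available. (The mislabelling of $x$- and $y$-powers in the tail term of Lemma~\ref{Caccioppoli} as printed is partly responsible, but the way the recursion must close tells you which power belongs where.) Also, the extra $k^{p-1}$ you introduce via $w_{i+1}(y)\le u_+(y)+k$ is unnecessary since $w_{i+1}\le u_+$ outright; if retained it could still be absorbed into the dyadic prefactor, so it is not the real issue.
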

\begin{proof}
	By the fractional Sobolev embedding theorem and H\"{o}lder's inequality, we have
	\begin{align}
		\label{L1}
		&\quad\mint_{B_{i+1}}H_r(w_{i+1})\,dx\nonumber\\
		&\le \left[ \mint_{B_{i+1}}\left( \frac{w_{i+1}}{r^{s_1}}+\frac{w_{i+1}^p}{r^{s_pp}}\right)^\kappa\,dx \right]^\frac{1}{\kappa} \left( \frac{\left| A^+(k_{i+1},r_{i+1})\right|}{|B_{i+1}|}\right) ^\frac{1}{\kappa'} \nonumber\\
		&\le C\left( \frac{\left| A^+(k_{i+1},r_{i+1})\right|}{|B_{i+1}|}\right) ^\frac{1}{\kappa'}\mint_{B_{i+1}}H_r(w_{i+1})\,dx\nonumber\\
		&\quad+C\left( \mint_{B_{i+1}}\int_{B_{i+1}}\frac{|w_{i+1}(x)-w_{i+1}(y)|}{|x-y|^{N+s_1}}\,dxdy+\mint_{B_{i+1}}\int_{B_{i+1}}\frac{|w_{i+1}(x)-w_{i+1}(y)|^p}{|x-y|^{N+s_pp}}\,dxdy\right)\nonumber\\
		&\quad\times \left( \frac{\left| A^+(k_{i+1},r_{i+1})\right|}{|B_{i+1}|}\right) ^\frac{1}{\kappa'}.
	\end{align}
	It is easy to verify that
	\begin{align*}
		\left| A^+(k_{i+1},r_{i+1})\right|\le \int_{A^+(k_{i+1},r_{i+1})}\frac{H_r(w_i)}{H_r(k_{i+1}-k_i)}\,dx\le \int_{B_i}\frac{H_r(w_i)}{H_r(k_{i+1}-k_i)},
	\end{align*}
	where the set $A^+$ is defined in \eqref{L0}. Applying Lemma \ref{Caccioppoli} and \eqref{se1}, we get
	\begin{align}
		\label{L2}
		&\quad\mint_{B_{i+1}}\int_{B_{i+1}}\frac{|w_{i+1}(x)-w_{i+1}(y)|}{|x-y|^{N+s_1}}\,dxdy+\mint_{B_{i+1}}\int_{B_{i+1}}\frac{|w_{i+1}(x)-w_{i+1}(y)|^p}{|x-y|^{N+s_pp}}\,dxdy\nonumber\\
		&\le \frac{C}{r_i-r_{i+1}}\mint_{B_i}\int_{B_i}\frac{|w_{i+1}(x)|}{|x-y|^{N+s_1-1}}\,dxdy+\frac{C}{(r_i-r_{i+1})^p}\mint_{B_i}\int_{B_i}\frac{|w_{i+1}(x)|^p}{|x-y|^{N+s_pp-p}}\,dxdy\nonumber\\
		&\quad+\frac{Cr^{N+s_pp}_i}{(r_i-r_{i+1})^{N+s_pp}}\mint_{B_i}\int_{\mathbb{R}^N\backslash B_{i+1}}\frac{w_{i+1}^{p-1}(x)w_{i+1}(y)}{|x-x_0|^{N+s_pp}}\,dxdy\nonumber\\
		&\quad+\mint_{B_i}\int_{\mathbb{R}^N\backslash B_{i+1}}\frac{w_{i+1}(y)}{|x-y|^{N+s_1}}\,dxdy\nonumber\\
		&\le \frac{Cr_i^{N+p}}{(r_i-r_{i+1})^{N+p}}\left[ \mint_{B_{i}}H_r(w_{i+1})\,dx+\left( \overline{\mathrm{Tail}}\left( u;x_0,\frac{r}{2}\right) +1\right)\mint_{B_i}w_{i+1}\,dx \right] \nonumber\\
		&\le C2^{(N+p)i}\left[ \mint_{B_i}H_r(w_{i+1})\,dx+\left( \overline{\mathrm{Tail}}\left( u;x_0,\frac{r}{2}\right) +1\right)\mint_{B_i}w_{i+1}\,dx\right].
	\end{align}
	Next, we estimate
	\begin{align*}
		\mint_{B_i}w_{i+1}\,dx\le \frac{1}{h_r(k_{i+1}-k_i)}\mint_{B_i}H_r(w_i)\,dx
	\end{align*}
	and
	\begin{align*}
		w_{i+1}\le \frac{w_i^l}{(k_{i+1}-k_i)^{l-1}}\quad\text{for}\quad l\ge1.
	\end{align*}
	Thus, bringing \eqref{L2} into \eqref{L1}, we conclude the desired estimate.
\end{proof}

\begin{proposition}
	\label{Caccioppoli3}
	Let $p>1$, $s_1,s_p\in(0,1)$ and let $u$ be a weak solution to problem \eqref{1.1} in the sense of Definition \ref{def1}. Then, $u$ is locally bounded in $\Omega$. Moreover, for any $B_{2r}\equiv B_{2r}(x_0)\subset\subset\Omega$ and any $\delta>0$, we have
	\begin{align*}
		\sup\limits_{B_{\frac{r}{2}}}u_+\le C_\delta H^{-1}_r\left( \mint_{B_r}H_r(u_+)\,dx\right)+\delta h^{-1}_r\left( r^{-s_pp}\left( \mathrm{Tail}^{p-1}\left( u;x_0,\frac{r}{2}\right) +1\right) \right),
	\end{align*}
	where the constant $C_\delta>0$ depends on $N,p,s_1,s_p$ and $\delta$.
\end{proposition}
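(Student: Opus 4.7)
The plan is to run a Moser-type iteration on the Caccioppoli-energy bound provided by Lemma \ref{Caccioppoli2}, with Lemma \ref{lemY} as the convergence engine. Fix $B_{2r}\equiv B_{2r}(x_0)\subset\subset\Omega$ and take the sequences $r_i, B_i, k_i, w_i$ defined in \eqref{se1}--\eqref{se3}, leaving the free parameter $k>0$ to be specified at the very end. Set
\[
Y_i:=\mint_{B_i}H_r(w_i)\,dx,
\]
noting $B_0=B_r$ and $k_0=0$, so that $Y_0=\mint_{B_r}H_r(u_+)\,dx$, while $B_i\downarrow B_{r/2}$ and $k_i\uparrow k$.

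\textbf{Step 1 (Reduction to a Moser recursion).} Since $k_{i+1}-k_i=k/2^{i+1}$, an elementary inspection of each summand in the definitions of $H_r$ and $h_r$ yields $H_r(k/2^{i+1})\ge 2^{-(i+1)p}H_r(k)$ and $h_r(k/2^{i+1})\ge 2^{-(i+1)(p-1)}h_r(k)$. Plugging these into the conclusion of Lemma \ref{Caccioppoli2} gives
\[
Y_{i+1}\le C\,b^{i}\,\frac{1}{H_r(k)^{1/\kappa'}}\left(1+\frac{\mathrm{\overline{Tail}}(u_+;x_0,r/2)+1}{h_r(k)}\right)Y_i^{\,1+1/\kappa'},
\]
with $b=2^{N+p+p/\kappa'+p-1}>1$ and $C=C(N,p,s_1,s_p)$. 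This is precisely the form $Y_{i+1}\le \tilde K\, b^i\, Y_i^{1+\delta}$ of Lemma \ref{lemY} with exponent $\delta=1/\kappa'$.

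\textbf{Step 2 (Choice of $k$).} Lemma \ref{lemY} requires $Y_0\le(2\tilde K)^{-\kappa'}b^{-(\kappa')^2}$. Split $k=k_1+k_2$ as follows. First choose
\[
k_2:=\delta\,h_r^{-1}\!\left(r^{-s_pp}\bigl(\mathrm{Tail}^{p-1}(u;x_0,r/2)+1\bigr)\right),
\]
which, together with $\mathrm{\overline{Tail}}(u_+;x_0,r/2)\le C r^{-s_pp}\mathrm{Tail}(u;x_0,r/2)^{p-1}$ and the monotonicity of $h_r$, forces $(\mathrm{\overline{Tail}}+1)/h_r(k)\le C\,\delta^{-(p-1)}$; the bracketed factor in $\tilde K$ is thereby absorbed into a constant $C_\delta=C_\delta(N,p,s_1,s_p,\delta)$. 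Next choose
\[
k_1:=C_\delta'\,H_r^{-1}\!\left(\mint_{B_r}H_r(u_+)\,dx\right)
\]
with $C_\delta'$ large enough so that, using the superadditivity $H_r(k_1+k_2)\ge H_r(k_1)$, the smallness condition $Y_0\le(2C_\delta H_r(k)^{-1/\kappa'})^{-\kappa'}b^{-(\kappa')^2}$ holds.

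\textbf{Step 3 (Passage to the limit).} Lemma \ref{lemY} then delivers $Y_i\to 0$, i.e.\ $\mint_{B_i}H_r((u-k_i)_+)\,dx\to 0$. Since $H_r\ge 0$, $B_i\downarrow B_{r/2}$ and $k_i\uparrow k$, Fatou's lemma gives $H_r((u-k)_+)=0$ a.e.\ on $B_{r/2}$, hence $u\le k=k_1+k_2$ a.e.\ on $B_{r/2}$, which is exactly the claimed bound. Applying the same scheme to $-u$ controls $u_-$ and yields local boundedness of $u$.

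\textbf{Main obstacle.} The only genuinely quantitative step is the calibration in Step 2: one must verify simultaneously that (i) the ansatz $k_2\propto\delta\,h_r^{-1}(\cdots)$ produces the correct $\delta^{-(p-1)}$ dependence in the tail factor so it can be swept into $C_\delta$, and (ii) the ensuing $H_r(k_1)$ is still large enough (despite the $C_\delta$ blow-up) to enforce the smallness of $Y_0$ demanded by Lemma \ref{lemY}. Once this balancing is arranged, everything else is a mechanical consequence of Lemmas \ref{Caccioppoli2} and \ref{lemY}, together with the straightforward algebraic monotonicity properties of $H_r$ and $h_r$.
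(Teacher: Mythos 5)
Your proposal is correct and follows essentially the same route as the paper's own proof: define $Y_i=\mint_{B_i}H_r(w_i)\,dx$, lower-bound $H_r(2^{-(i+1)}k)$ and $h_r(2^{-(i+1)}k)$ to turn Lemma \ref{Caccioppoli2} into a Moser recursion $Y_{i+1}\le \tilde K b^i Y_i^{1+1/\kappa'}$, split $k$ into a tail part $\delta h_r^{-1}(\cdot)$ that tames the bracketed tail factor and an energy part $H_r^{-1}(\cdot)$ that enforces the smallness of $Y_0$ demanded by Lemma \ref{lemY}, and pass to the limit. The only (inessential) deviations are that you obtain the slightly sharper bound $\delta^{-(p-1)}$ in place of the paper's $\delta^{-p}$ for the tail factor, and you invoke ``superadditivity'' of $H_r$ where monotonicity ($H_r(k_1+k_2)\ge H_r(k_1)$) is the property actually being used.
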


\begin{proof}
	We define
	\begin{align*}
		Y_i:=\mint_{B_i}H_r(w_i)\,dx.
	\end{align*}
	Then, from Lemma \ref{Caccioppoli2} one can estimate $Y_i$ as
	\begin{align*}
		Y_{i+1}&\le \frac{C2^{(N+p)i}}{H_r(2^{-i-1}k)^\frac{1}{\kappa'}}\left( 1+\frac{r^{-s_pp}\mathrm{Tail}^{p-1}\left( u;x_0,\frac{r}{2}\right) +1}{h_r(2^{-i-1}k)}\right)Y_i^{1+\frac{1}{\kappa'}}\\
		&\le \frac{C2^{(N+p)i}}{H_r(2^{-i-1}k)^\frac{1}{\kappa'}}\left( 1+\frac{\mathrm{Tail}^{p-1}\left( u;x_0,\frac{r}{2}\right) +1}{r^{s_pp}h_r(2^{-i-1}k)}\right)Y_i^{1+\frac{1}{\kappa'}}.
	\end{align*}
	Note that
	\begin{align*}
		H_r(2^{-i-1}k)\ge 2^{-(i+1)p}H_r(k)
	\end{align*}
	and
	\begin{align*}
		h_r(2^{-i-1}k)\ge 2^{-(i+1)(p-1)}h_r(k),
	\end{align*}
	we get
	\begin{align*}
		Y_{i+1}&\le \frac{C2^{i(N+p)}2^{(i+1)\frac{p}{\kappa'}}2^{(i+1)(p-1)}}{\left( H_r(k)\right)^\frac{1}{\kappa'}}\left( 1+\frac{\mathrm{Tail}^{p-1}\left( u;x_0,\frac{r}{2}\right) +1}{r^{s_pp}h_r(k)}\right)Y_i^{1+\frac{1}{\kappa'}}\\
		&\le \frac{C2^{i\left( N+2p+\frac{p}{\kappa'}\right) }}{\left( H_r(k)\right)^\frac{1}{\kappa'}}\left( 1+\frac{\mathrm{Tail}^{p-1}\left( u;x_0,\frac{r}{2}\right) +1}{r^{s_pp}h_r(k)}\right)Y_i^{1+\frac{1}{\kappa'}}.
	\end{align*}
	Now, we first set $k\ge \delta h^{-1}_r\left( r^{-s_pp}\left( \mathrm{Tail}^{p-1}\left( u;x_0,\frac{r}{2}\right)+1 \right) \right) $ such that
	\begin{align}
		\label{k1}
		\frac{\delta^p\left( \mathrm{Tail}^{p-1}\left( u;x_0,\frac{r}{2}\right)+1 \right) }{r^{s_pp}h_r(k)}\le \frac{\mathrm{Tail}^{p-1}\left( u;x_0,\frac{r}{2}\right)+1}{r^{s_pp}h_r\left( \frac{k}{\delta}\right) }\le1.
	\end{align}
	Therefore, we know 
	\begin{align*}
		Y_{i+1}\le\frac{C2^{i\theta}}{\left( H_r(k)\right)^\frac{1}{\kappa'}\delta^p }Y_{i+1}^{1+\frac{1}{\kappa'}}
	\end{align*}
	with $\theta=N+2p+\frac{p}{\kappa'}$.
	
	Next, we set $\sigma:=\left( \kappa'\right)^2 $ and select $k$ large enough such that
	\begin{align*}
		Y_0\le \mint_{B_r}H_r(u_+)\,dx\le \left( \frac{2C}{\delta^p\left( H_r(k)\right)^\frac{1}{\kappa'} }\right)^{-\kappa'}2^{-\theta\sigma}.
	\end{align*}
	In other words, we select
	\begin{align}
		\label{k2}
		k\ge H^{-1}_r\left( 2^{\theta\sigma}\left( \frac{2C}{\delta^p}\right)^{\kappa'}\mint_{B_r}H_r(u_+)\,dx, \right) 
	\end{align}
	because
	\begin{align*}
		\left( \frac{2C}{\delta^p\left( H_r(k)\right)^\frac{1}{\kappa'} }\right)^{-\kappa'}2^{-\theta\sigma}=2^{-\theta\sigma}H_r(k)\left( \frac{2C}{\delta^p}\right)^{\kappa'}.
	\end{align*}
	
	To ensure the validity of \eqref{k1} and \eqref{k2}, we finally choose
	\begin{align*}
		k=H^{-1}_r\left( 2^{\theta\sigma}\left( \frac{2C}{\delta^p}\right)^{\kappa'}\mint_{B_r}H_r(u_+)\,dx \right)+\delta h^{-1}_r\left( r^{-s_pp}\left( \mathrm{Tail}^{p-1}\left( u;x_0,\frac{r}{2}\right)+1 \right) \right).
	\end{align*}
	Thus, we can apply Lemma \ref{lemY} to obtain $\lim\limits_{j\rightarrow+\infty}Y_j=0$ and the desired estimate.
\end{proof}

\begin{remark}
	\label{rem2}
	Similar to the proof of Lemmas \ref{Caccioppoli}, \ref{Caccioppoli2} and Proposition \ref{Caccioppoli3}, we can estimate the weak solution $u$ from blow, that is,
	\begin{align*}
		\sup\limits_{B_{\frac{r}{2}}}u_-\le C_\delta H^{-1}_r\left( \mint_{B_r}H_r(u_-)\,dx\right)+\delta h^{-1}_r\left( r^{-s_pp}\left( \mathrm{Tail}^{p-1}\left( u;x_0,\frac{r}{2}\right) +1\right) \right).
	\end{align*}
\end{remark}

\section{The case $s_p\in \left( 0,\frac{p-1}{p}\right] $}
\label{sec4}

We begin this section with the following Tail estimate. As we all know, for a fractional problem, the nonlocal terms are unavoidable. Therefore, when dealing with the weak formulation \eqref{1.2} via the difference quotient technique, Lemma \ref{lem8} below allows us to analyze the dependence on the step size $|h|$ and the difference $\tau_h u$. To our best knowledge, this lemma is the best result to generate the message about $|h|$. Although this advantage may be partially obscured due to the influence of the 1-structure, we will employ Lemma \ref{lem8} to address the nonlocal term of  $p$-growth. It is also worth noting that the inclusion of the 1-structure leads to the results in this paper differing from those in \cite{BDL2402}.

\begin{lemma}[Lemma 3.1, \cite{BDL2402}]
	\label{lem8}
	Let $p\ge2$ and $s_p\in (0,1)$. There exists a constant $C=\frac{\tilde{C}(N,p)}{s_p}$ such that whenever $u\in L^{p-1}_{s_pp}(\mathbb{R}^N)$, $x_0\in \mathbb{R}^N$, $R>0$, $r\in (0, R)$, and $d\in \left( 0, \frac{1}{4}(R-r)\right] $, we have, for any $x\in B_{\frac{1}{2}(R+r)}(x_0)$ and any $h\in B_d\backslash\left\lbrace 0\right\rbrace$ with $0<|h|\le d$ that
	\begin{align*}
		&\quad\left| \int_{\mathbb{R}^N\backslash B_R(x_0)}\frac{J_p(u_h(x)-u_h(y))-J_p(u(x)-u(y))}{|x-y|^{N+s_pp}}dy\right|\\
		&\le C\frac{|\tau_hu(x)|}{R^{s_pp}}\left( \frac{R}{R-r}\right)^{N+s_pp}\mathcal{T}^{p-2}+C\frac{|h|}{R^{s_pp+1}}\left( \frac{R}{R-r}\right)^{N+s_pp+1}\mathcal{T}^{p-1},  
	\end{align*}
	where
	\begin{align*}
		\mathcal{T}:=\|u\|_{L^\infty(B_{R+d}(x_0))}+\mathrm{Tail}(u;x_0,R+d).
	\end{align*}
\end{lemma}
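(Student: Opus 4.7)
The plan is to start by setting $A := u(x)-u(y)$ and $B := u(x+h)-u(y+h)$, so that $B-A=\tau_hu(x)-\tau_hu(y)$. Applying Lemma~\ref{lem2} with $\gamma=p-1$ gives $|J_p(B)-J_p(A)|\le C(|A|+|B|)^{p-2}|B-A|$. Since $x,x+h\in B_{R+d}(x_0)$, both $|u(x)|$ and $|u(x+h)|$ are bounded by $\mathcal{T}$, so I would use
\begin{align*}
(|A|+|B|)^{p-2}\le C\bigl(\mathcal{T}^{p-2}+|u(y)|^{p-2}+|u(y+h)|^{p-2}\bigr).
\end{align*}
Splitting $|B-A|\le|\tau_hu(x)|+|\tau_hu(y)|$ decomposes the integral into Piece~A (carrying the $y$-independent factor $|\tau_hu(x)|$) and Piece~B (carrying $|\tau_hu(y)|$). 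Throughout, I would rely on the geometric comparison $|x-y|\ge\tfrac{R-r}{2R}|y-x_0|$, valid for $x\in B_{(R+r)/2}(x_0)$ and $y\notin B_R(x_0)$, which yields the factor $(R/(R-r))^{N+s_pp}$ and, after $\int_{\mathbb{R}^N\setminus B_R(x_0)}|y-x_0|^{-N-s_pp}dy\le C/(s_pR^{s_pp})$, the $1/s_p$ loss in the constant.

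For Piece~A, the constant $|\tau_hu(x)|$ pulls out. The $\mathcal{T}^{p-2}$ contribution is immediate from the tail bound above. For the $|u(y)|^{p-2}$ contribution I would apply the layer-cake inequality $|u(y)|^{p-2}\le\mathcal{T}^{p-2}+|u(y)|^{p-1}/\mathcal{T}$, reducing the second piece to the basic tail $\mathrm{Tail}(u;x_0,R+d)^{p-1}/(\mathcal{T}R^{s_pp})\le\mathcal{T}^{p-2}/R^{s_pp}$. The $|u(y+h)|^{p-2}$ contribution is handled identically after $y'=y+h$; the shifted domain $\mathbb{R}^N\setminus B_R(x_0+h)$ differs from the original only on a set of measure $O(R^{N-1}|h|)$, which is reabsorbed into the same tail estimate. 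Collecting these gives the first target term $C|\tau_hu(x)|\mathcal{T}^{p-2}R^{-s_pp}(R/(R-r))^{N+s_pp}$.

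Piece~B is the delicate part: the factor $|h|$ cannot be produced by the triangle inequality $|\tau_hu(y)|\le|u(y+h)|+|u(y)|$, which discards the smallness. I would therefore return to the mean-value identity
\begin{align*}
J_p(B)-J_p(A)=(B-A)\,G(x,y),\qquad G(x,y):=(p-1)\int_0^1|A+t(B-A)|^{p-2}\,dt,
\end{align*}
with $|G(x,y)|\le C(|A|+|B|)^{p-2}$, and, without taking absolute values, apply the translation $y'=y+h$ inside $\int G(x,y)u(y+h)/|x-y|^{N+s_pp}\,dy$. This rewrites Piece~B as an integral of $u(y)$ against the kernel difference $K(x,y)-K(x,y-h)$, where $K:=G/|x-y|^{N+s_pp}$, plus a boundary contribution on the symmetric difference $B_R(x_0)\triangle B_R(x_0+h)$ of volume $O(R^{N-1}|h|)$. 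Because $|h|\le(R-r)/4\le\tfrac12|x-y|$, the smooth part of the kernel satisfies
\begin{align*}
\bigl||x-y|^{-N-s_pp}-|x-y+h|^{-N-s_pp}\bigr|\le C|h|/|x-y|^{N+s_pp+1},
\end{align*}
which supplies the extra power of $R/(R-r)$. The $G$-difference is absorbed via its uniform bound together with the same layer-cake reduction, and the remaining weight reduces to $\int|u(y)|/|y-x_0|^{N+s_pp+1}\,dy\le C\mathcal{T}/R^{s_pp+1}$ after splitting $|u(y)|\le\mathcal{T}+|u(y)|^{p-1}/\mathcal{T}^{p-2}$. This yields the second target contribution $C|h|\mathcal{T}^{p-1}R^{-s_pp-1}(R/(R-r))^{N+s_pp+1}$.

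The main obstacle is Piece~B: extracting $|h|$ requires genuine cancellation, obtainable only through the change of variables, which then forces one to cope with the non-smooth weight $G$ (non-differentiable in $y$, since $u\in L^{p-1}_{s_pp}$ is merely summable) and with boundary contributions on $B_R(x_0)\triangle B_R(x_0+h)$. The crucial technical check is that both the $G$-difference and the boundary piece are dominated by tail-type quantities with the correct power $\mathcal{T}^{p-1}$, and are therefore absorbed into the stated right-hand side rather than generating new, incompatible terms.
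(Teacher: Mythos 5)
The paper does not prove this lemma at all — it is quoted verbatim as Lemma 3.1 of the cited reference \cite{BDL2402} — so there is no in-paper proof to match your argument against; what follows is an assessment of the argument on its own terms.

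Your decomposition into Piece~A (carrying $|\tau_h u(x)|$) and Piece~B (carrying $|\tau_h u(y)|$) and the treatment of Piece~A are sound, including the geometric inequality $|x-y|\ge\frac{R-r}{2R}|y-x_0|$, the $1/s_p$ loss from $\int_{\mathbb{R}^N\setminus B_R}|y-x_0|^{-N-s_pp}\,dy$, and the layer-cake reduction $|u(y)|^{p-2}\le\mathcal{T}^{p-2}+|u(y)|^{p-1}/\mathcal{T}$. The gap is in Piece~B. After you translate $y\mapsto y+h$, the kernel $K(x,y)=G(x,y)/|x-y|^{N+s_pp}$ splits as
\begin{align*}
K(x,y-h)-K(x,y)=G(x,y-h)\left(\frac{1}{|x-y+h|^{N+s_pp}}-\frac{1}{|x-y|^{N+s_pp}}\right)+\frac{G(x,y-h)-G(x,y)}{|x-y|^{N+s_pp}},
\end{align*}
and only the first summand is $O(|h|)$. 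For the second, a uniform bound on $G$ gives no factor of $|h|$ whatsoever; and a genuine estimate of $G(x,y-h)-G(x,y)$, writing $G(x,y)=(p-1)\int_0^1|v_t(x)-v_t(y)|^{p-2}dt$ with $v_t:=(1-t)u+tu_h$, produces $|v_t(y-h)-v_t(y)|\lesssim|\tau_{-h}u(y)|+|\tau_hu(y)|$ — i.e.\ exactly the differences you were trying to eliminate, since $u$ is merely summable and has no modulus of continuity. You have gone in a circle: the $G$-difference reintroduces $\tau_hu(y)$ and cannot be ``absorbed'' into $|h|\,\mathcal{T}^{p-1}$.

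The fix is to postpone the mean-value/difference-quotient step and decompose at the level of $J_p$ itself:
\begin{align*}
J_p(u_h(x)-u_h(y))-J_p(u(x)-u(y))=\bigl[J_p(u_h(x)-u_h(y))-J_p(u(x)-u_h(y))\bigr]+\bigl[J_p(u(x)-u_h(y))-J_p(u(x)-u(y))\bigr].
\end{align*}
The first bracket differs only in the $x$-slot; Lemma~\ref{lem2} gives a factor $|\tau_hu(x)|$ and, after the same tail manipulations you used, the first target term. The second bracket differs only in the $y$-slot; integrating over $\mathbb{R}^N\setminus B_R(x_0)$ and substituting $y\mapsto y+h$ in the $u_h(y)$ piece, the integrand in both resulting integrals is the \emph{same} function $J_p(u(x)-u(y))$, so no $G$-difference ever appears. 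Only a kernel difference $\bigl||x-y+h|^{-N-s_pp}-|x-y|^{-N-s_pp}\bigr|\lesssim|h|\,|x-y|^{-N-s_pp-1}$ (using $|h|\le d\le(R-r)/4\le\frac12|x-y|$) and a domain discrepancy on $B_R(x_0)\,\triangle\,B_R(x_0+h)$ (measure $O(R^{N-1}|h|)$, kernel and $u$ uniformly bounded there) survive, and both are dominated by $C\,|h|\,\mathcal{T}^{p-1}R^{-s_pp-1}(R/(R-r))^{N+s_pp+1}$. Your instinct that the change of variables is the only source of $|h|$ is right; it must be applied before, not after, factoring out the difference quotient.
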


Next, we are going to derive a local estimate. Some of the reasoning is based on the ideas developed in \cite[Proposition 3.3]{BDL2402}. Thanks to the $p$-growth, one can expect a positive term on the left-hand side of the energy inequality. This inequality allows us to obtain
\begin{align*}
	\|\tau_h(\tau_hu)\|^q_{L^q(B_r)}\le C\left( |h|^{s_p p+\sigma(q-p+1)}+|h|^{s_p p+\sigma(q-p+2)}\right).
\end{align*}
The term $|h|^{s_pp+\sigma(q-p+1)}$ comes from 1-growth since we can not expect any regularity result for $Z$. Thus, $p$-growth is controlled by 1-growth.

From now on, we define
\begin{align*}
	\mu^{s_p}_p:=
	\begin{cases}
		\frac{s_pp-2}{p-2}\quad &\text{if }p>2,\\
		0\quad &\text{if }p=2.
	\end{cases}
\end{align*}
\begin{proposition}
	\label{pro9}
	Let $p\ge2$, $s_1,s_p\in (0,1)$, $q\ge p$ and $\sigma\in \left( \max\left\lbrace 0,\mu^{s_p}_p\right\rbrace, \min\left\lbrace 1,\frac{s_pp}{p-1}\right\rbrace  \right) $. 
	There exists a constant $C=C(N,p,q,s_1,s_p,\sigma)$ such that for any $u\in W^{\sigma,q}_{\mathrm{loc}}(\Omega)$ being a locally bounded weak solution to problem \eqref{1.1} in the sense of Definition \ref{def1}, we have, for any $r\in(0,R)$ with $R\in (0,1)$, $d\in\left( 0,\frac{1}{4}(R-r)\right] $, $B_{R+d}\equiv B_{R+d}(x_0)\subset\subset\Omega$, $h\in B_d\backslash\left\lbrace 0\right\rbrace $,
	\begin{align*}
		&\quad\int_{B_r}\int_{B_r}\frac{\left| J_{\frac{q}{p}+1}(\tau_hu)(x)-J_{\frac{q}{p}+1}(\tau_hu)(y) \right|^p }{|x-y|^{N+s_pp}}\,dxdy\\
		&\le C\frac{R^{1-s_1}}{R-r}\int_{B_R}|\tau_hu|^{q-p+1}\,dx+C\frac{1}{s_1(R-r)^{s_1}}\int_{B_R}|\tau_hu|^{q-p+1}\,dx\\
		&\quad+C\frac{R^{(N-\varepsilon)\frac{q-p+2}{q}}}{(R-r)^2}[u]^{p-2}_{W^{\sigma,q}(B_{R+d})}\left( \int_{B_R}|\tau_hu|^{q}\,dx\right)^{\frac{q-p+2}{q}}\\
		&\quad+C\frac{\mathcal{T}^{p-2}}{(R-r)^{N+s_pp}}\int_{B_R}|\tau_hu|^{q-p+2}\,dx+C\frac{\mathcal{T}^{p-1}|h|}{(R-r)^{N+s_pp+1}}\int_{B_R}|\tau_hu|^{q-p+1}\,dx,
	\end{align*}
	where
	\begin{align*}
		\varepsilon=\left( N+s_pp-2-\frac{(N+\sigma q)(p-2)}{q}\right)\frac{q}{q-p+2}
	\end{align*}
	and
		\begin{align*}
			\mathcal{T}:=\|u\|_{L^\infty(B_{R+d}(x_0))}+\mathrm{Tail}(u;x_0,R+d).
		\end{align*}
\end{proposition}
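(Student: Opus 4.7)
My plan is to test \eqref{1.2} with $\varphi=J_{q-p+2}(\tau_hu)\,\eta^{p}$, where $\eta\in C^\infty_c(B_{(R+r)/2})$ satisfies $\eta\equiv1$ on $B_r$ and $|\nabla\eta|\le C/(R-r)$. A change of variables $(x,y)\mapsto(x+h,y+h)$ in \eqref{1.2} yields the analogous identity for $u_h(x):=u(x+h)$ with shifted sign field $Z_h(x,y):=Z(x+h,y+h)\in\mathrm{sgn}(u_h(x)-u_h(y))$. Subtracting the identity for $u$ from that for $u_h$ produces the difference equation
\begin{align*}
	0&=\int_{\mathbb{R}^N}\int_{\mathbb{R}^N}\frac{(Z_h-Z)(\varphi(x)-\varphi(y))}{|x-y|^{N+s_1}}\,dxdy\\
	&\quad+\int_{\mathbb{R}^N}\int_{\mathbb{R}^N}\frac{(J_p(u_h(x)-u_h(y))-J_p(u(x)-u(y)))(\varphi(x)-\varphi(y))}{|x-y|^{N+s_pp}}\,dxdy.
\end{align*}

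\textbf{Nonlocal $p$-growth term.} I would split the second integral into the \emph{local} region $B_R\times B_R$ and its complement. The complement is handled by antisymmetry of the integrand in $(x,y)$, so that it reduces to twice $\int_{B_{(R+r)/2}}\varphi(x)\int_{\mathbb{R}^N\setminus B_R}\cdots\,dy\,dx$, whose inner integral is bounded by Lemma~\ref{lem8}; multiplying by $|\varphi(x)|\le|\tau_hu(x)|^{q-p+1}$ and integrating produces the $\mathcal{T}^{p-2}(R-r)^{-(N+s_pp)}\int|\tau_hu|^{q-p+2}$ and $\mathcal{T}^{p-1}|h|(R-r)^{-(N+s_pp+1)}\int|\tau_hu|^{q-p+1}$ contributions. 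On the local region I apply Lemma~\ref{lem3} with $\delta=q-p+1$, $a=u_h(x)$, $b=u_h(y)$, $c=u(x)$, $d=u(y)$, $e=\eta^{p/2}(x)$, $f=\eta^{p/2}(y)$. Its positive piece, combined with $|\tau_hu(x)-\tau_hu(y)|^{p-2}\le(|u_h(x)-u_h(y)|+|u(x)-u(y)|)^{p-2}$ (valid for $p\ge2$) and Lemma~\ref{lem2} with $\gamma=q/p$, restricted to $B_r\times B_r$ where $\eta\equiv1$, produces precisely the LHS of the claim. The negative \emph{cross} piece carries a factor $|\eta^{p/2}(x)-\eta^{p/2}(y)|^2\le C(R-r)^{-2}|x-y|^2$; I would dispose of it by H\"older's inequality with conjugate exponents $q/(p-2)$ and $q/(q-p+2)$, splitting the kernel $|x-y|^{-(N+s_pp-2)}$ so that the first factor becomes $[u]^{p-2}_{W^{\sigma,q}(B_{R+d})}$ (after a triangle inequality on $|u_h(x)-u_h(y)|+|u(x)-u(y)|$ and a translation accommodating the shift in $u_h$) and the second factor becomes $\bigl(\int_{B_R}|\tau_hu|^q\int_{B_R}|x-y|^{-\varepsilon}\,dy\,dx\bigr)^{(q-p+2)/q}$. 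The integrability condition $\varepsilon<N$, which ensures $\int_{B_R}|x-y|^{-\varepsilon}\,dy\le CR^{N-\varepsilon}$, is exactly the hypothesis $\sigma>\mu_p^{s_p}$.

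\textbf{Nonlocal $1$-growth term.} I decompose
\begin{align*}
	\varphi(x)-\varphi(y)&=\bigl[J_{q-p+2}(\tau_hu(x))-J_{q-p+2}(\tau_hu(y))\bigr]\tfrac{\eta^p(x)+\eta^p(y)}{2}\\
	&\quad+\bigl[J_{q-p+2}(\tau_hu(x))+J_{q-p+2}(\tau_hu(y))\bigr]\tfrac{\eta^p(x)-\eta^p(y)}{2}.
\end{align*}
The elementary identities $Z(u(x)-u(y))=|u(x)-u(y)|$ and $Z_h(u_h(x)-u_h(y))=|u_h(x)-u_h(y)|$ together with $|Z|,|Z_h|\le1$ yield the monotonicity $(Z_h-Z)(\tau_hu(x)-\tau_hu(y))\ge0$; since $J_{q-p+2}$ is strictly increasing, this upgrades to $(Z_h-Z)\bigl[J_{q-p+2}(\tau_hu(x))-J_{q-p+2}(\tau_hu(y))\bigr]\ge0$ pointwise. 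Hence the \emph{symmetric} part of the $1$-growth integrand is nonnegative and, once the difference identity is rearranged to isolate the LHS of the claim on one side, can simply be discarded. For the remaining \emph{cutoff} part I use only $|Z_h-Z|\le2$, $|J_{q-p+2}(\tau_hu)|=|\tau_hu|^{q-p+1}$, and two bounds for $|\eta^p(x)-\eta^p(y)|$: the Lipschitz bound $\le C|x-y|/(R-r)$ for $y\in B_{R+d}$ (with $\int_{B_{R+d}}|x-y|^{1-N-s_1}\,dy\le CR^{1-s_1}/(1-s_1)$), and the trivial bound $\le1$ for $y\notin B_{R+d}$, where $\eta(y)=0$ (using $\int_{|x-y|\ge(R-r)/2}|x-y|^{-N-s_1}\,dy\le C/(s_1(R-r)^{s_1})$). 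This yields the two summands $CR^{1-s_1}(R-r)^{-1}\int_{B_R}|\tau_hu|^{q-p+1}$ and $Cs_1^{-1}(R-r)^{-s_1}\int_{B_R}|\tau_hu|^{q-p+1}$ of the claim.

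\textbf{Main obstacle.} The decisive difficulty is the $1$-growth term: unlike in the pure $p$-Laplace setting of \cite{BDL2402}, the sign field $Z$ has no regularity beyond $\|Z\|_{L^\infty}\le1$, so no coercivity or cancellation on $Z_h-Z$ beyond its bounded monotone role is available. Using the monotonicity identity to discard the symmetric piece and the crude bound $|Z_h-Z|\le2$ on the cutoff commutator is the only viable path; this is precisely why the claim's right-hand side must accommodate the weaker $|\tau_hu|^{q-p+1}$ terms with first-order geometric weights $(R-r)^{-1}$ or $(R-r)^{-s_1}$ coming from the $1$-growth, alongside the stronger $|\tau_hu|^{q-p+2}$ contributions produced purely by the $p$-growth tail.
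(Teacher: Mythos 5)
The proposal follows essentially the same route as the paper's proof (same shifted test function $\varphi=J_{q-p+2}(\tau_hu)\eta^p$, same local/nonlocal split for the $p$-growth term, same use of Lemmas~\ref{lem8}, \ref{lem3}, \ref{lem2}, same H\"older split for the cross term with the same role for $\sigma>\mu_p^{s_p}$), and your derivation of the monotonicity $(Z_h-Z)\bigl[J_{q-p+2}(\tau_hu)(x)-J_{q-p+2}(\tau_hu)(y)\bigr]\ge0$ from $ZB=|B|$, $Z_hA=|A|$, $|Z|,|Z_h|\le1$ is slicker than the paper's sign case analysis and a genuine simplification. However, there is a real gap in the treatment of the $1$-growth term.

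You apply the symmetric/cutoff decomposition of $\varphi(x)-\varphi(y)$ over all of $\mathbb{R}^N\times\mathbb{R}^N$. For $y$ outside the support of $\eta$ and $x\in B_{(R+r)/2}$, each of the two summands contains the factor $J_{q-p+2}(\tau_hu)(y)\,\eta^p(x)$; these cancel in the sum, recovering $\varphi(x)-\varphi(y)=\varphi(x)$, but once you discard the symmetric piece by its sign and bound the cutoff piece in absolute value, you are left with a tail contribution
\begin{align*}
\int_{B_{(R+r)/2}}\int_{\mathbb{R}^N\setminus B_{R+d}}\frac{|\tau_hu(y)|^{q-p+1}\,\eta^p(x)}{|x-y|^{N+s_1}}\,dy\,dx
\end{align*}
that is not controlled by anything on the right-hand side of the claim. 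The hypotheses give only $u\in L^{p-1}_{s_pp}(\mathbb{R}^N)\cap L^\infty(B_{R+d})$; since $q-p+1$ can exceed $p-1$ and the $1$-growth kernel has exponent $s_1$ rather than $s_pp$, this integral need not even be finite, so the decomposition is not well defined termwise. In particular, the claimed bound $Cs_1^{-1}(R-r)^{-s_1}\int_{B_R}|\tau_hu|^{q-p+1}\,dx$ only accounts for the $|\tau_hu(x)|^{q-p+1}$ half of your cutoff estimate, not the $|\tau_hu(y)|^{q-p+1}$ half.

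The repair is precisely what the paper does: split the $1$-growth integral into the local square $B_R\times B_R$ and the tail $B_{(R+r)/2}\times(\mathbb{R}^N\setminus B_R)$ (doubled by symmetry) \emph{before} decomposing. On $B_R\times B_R$ both $x,y$ lie in a ball on which $u$ is bounded, so the symmetric/cutoff decomposition is harmless; the symmetric piece is nonnegative and dropped, the cutoff piece gives the $R^{1-s_1}(R-r)^{-1}\int_{B_R}|\tau_hu|^{q-p+1}$ term. On the tail piece one uses $\varphi(y)=0$ directly (no decomposition), so the integrand is $(Z_h-Z)J_{q-p+2}(\tau_hu)(x)\eta^p(x)|x-y|^{-N-s_1}$ and only $\tau_hu(x)$ for $x$ inside $\operatorname{supp}\eta$ appears, yielding the $s_1^{-1}(R-r)^{-s_1}\int_{B_R}|\tau_hu|^{q-p+1}$ term cleanly. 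The rest of your argument (the $p$-growth estimates) is sound.
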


\begin{proof}
	For a ball $B_{R+d}\equiv B_{R+d}(x_0)\subset\subset\Omega$, $0<r<R\le1$ and $d\in\left( 0,\frac{1}{4}(R-r)\right]$, by testing \eqref{1.2} with $\varphi_{-h}:=\varphi(x-h)$, where $\varphi\in W^{s_1,1}(B_R)\cap W^{s_p, p}(B_R)$ with $\mathrm{supp}\,\varphi\subset B_{\frac{1}{2}(R+r)}$, we conclude
	\begin{align}
		\label{9.1}
		0&=\int_{\mathbb{R}^N}\int_{\mathbb{R}^N}Z_h\frac{\varphi(x)-\varphi(y)}{|x-y|^{N+s_1}}\,dxdy\nonumber\\
		&\quad+\int_{\mathbb{R}^N}\int_{\mathbb{R}^N}\frac{|u_h(x)-u_h(y)|^{p-2}(u_h(x)-u_h(y))(\varphi(x)-\varphi(y))}{|x-y|^{N+s_pp}}\,dxdy.
	\end{align}
	Subtracting \eqref{1.2} from \eqref{9.1}, we obtain
	\begin{align}
		\label{9.2}
		0&=\int_{\mathbb{R}^N}\int_{\mathbb{R}^N}(Z_h-Z)\frac{\varphi(x)-\varphi(y)}{|x-y|^{N+s_1}}\,dxdy\nonumber\\
		&\quad+\int_{\mathbb{R}^N}\int_{\mathbb{R}^N}\frac{\left[ J_p(u_h(x)-u_h(y))-J_p(u(x)-u(y))\right] (\varphi(x)-\varphi(y))}{|x-y|^{N+s_pp}}\,dxdy
	\end{align}
	for any $\varphi\in W^{s_1,1}(B_R)\cap W^{s_p,p}(B_R)$ with $\mathrm{supp}\,\varphi\subset B_{\frac{1}{2}(R+r)}$.
	
	Now, we choose
	\begin{align*}
		\varphi:=J_{q-p+2}(\tau_hu)\eta^p
	\end{align*}
	with $\eta\in C^1_0(B_{\frac{1}{2}(R+r)})$, $|\nabla \eta|\le \frac{C}{R-r}$ and $\eta\equiv1$ in $B_r$ in \eqref{9.1}. One can easily verify that $\varphi\in W^{s_1,1}(B_R)\cap W^{s_p, p}(B_R)$,  because $u$ is locally bounded in $\Omega$. Thus, we conclude
	\begin{align}
		\label{9.3}
		0&=\int_{\mathbb{R}^N}\int_{\mathbb{R}^N}(Z_h-Z)\frac{J_{q-p+2}(\tau_hu)\eta^p(x)-J_{q-p+2}(\tau_hu)\eta^p(y)}{|x-y|^{N+s_1}}\,dxdy\nonumber\\
		&\quad+\int_{\mathbb{R}^N}\int_{\mathbb{R}^N}\frac{J_p(u_h(x)-u_h(y))-J_p(u(x)-u(y))}{|x-y|^{N+s_pp}}\nonumber\\
		&\qquad\qquad\qquad\times\left[ J_{q-p+2}(\tau_hu)\eta^p(x)-J_{q-p+2}(\tau_hu)\eta^p(y)\right]\,dxdy\nonumber\\
		&=:I+P.
	\end{align}
	
	Next, we will estimate the terms $I$ and $P$, respectively. For the term $I$, we have the following decomposition
	\begin{align*}
		I&=\int_{B_R}\int_{B_R}(Z_h-Z)\frac{J_{q-p+2}(\tau_hu)\eta^p(x)-J_{q-p+2}(\tau_hu)\eta^p(y)}{|x-y|^{N+s_1}}\,dxdy\\
		&\quad+2\int_{B_{\frac{1}{2}(R+r)}}\int_{\mathbb{R}^N\backslash B_R}(Z_h-Z)\frac{J_{q-p+2}(\tau_hu)\eta^p(x)}{|x-y|^{N+s_1}}\,dxdy\\
		&=:I_1+2I_2.
	\end{align*}
	We now pay our attention to the local term $I_1$ to deduce
	\begin{align*}
		I_1&=\int_{B_R}\int_{B_R}(Z_h-Z)\frac{J_{q-p+2}(\tau_hu)(x)-J_{q-p+2}(\tau_hu)(y)}{|x-y|^{N+s_1}}\frac{\eta^p(x)+\eta^p(y)}{2}\,dxdy\\
		&\quad+\int_{B_R}\int_{B_R}(Z_h-Z)\frac{\eta^p(x)-\eta^p(y)}{|x-y|^{N+s_1}}\frac{J_{q-p+2}(\tau_hu)(x)+J_{q-p+2}(\tau_hu)(y)}{2}\,dxdy\\
		&=:I_{1,1}+I_{1,2}.
	\end{align*}
	
	By the definitions of $Z$ and $Z_h$, we have, for $(x,y)\in B_R\times B_R$,
	\begin{align*}
		Z_h-Z=0
	\end{align*}
	in  the domains $\left\lbrace u_h(x)>u_h(y)\text{ and }u(x)>u(y)\right\rbrace $ and $\left\lbrace u_h(x)<u_h(y)\text{ and }u(x)<u(y)\right\rbrace $.
	
	In the domain $\left\lbrace u_h(x)<u_h(y)\text{ and }u(x)\ge u(y)\right\rbrace $, we have $\tau_hu(x)\le \tau_hu(y)$, which follows that
		\begin{align*}
		J_{q-p+2}(\tau_hu)(x)\le J_{q-p+2}(\tau_hu)(y)
	\end{align*}
	and
	\begin{align}
		\label{9.4}
		(Z_h-Z)\left[ J_{q-p+2}(\tau_hu)(x)-J_{q-p+2}(\tau_hu)(y)\right]\ge 0.
	\end{align}
Similarly, we know that \eqref{9.4} holds in the following four domains
	\begin{align*}
		\left\lbrace u_h(x)\ge u_h(y)\text{ and }u(x)<u(y)\right\rbrace, \quad \left\lbrace u_h(x)>u_h(y)\text{ and }u(x)=u(y)\right\rbrace,\\
		\left\lbrace u_h(x)=u_h(y)\text{ and }u(x)> u(y)\right\rbrace,  \quad	\left\lbrace u_h(x)=u_h(y)\text{ and }u(x)=u(y)\right\rbrace.
	\end{align*}
	Thus, we obtain the non-negativity of $I_{1,1}$.
	
	We now estimate $I_{1,2}$. For the cut-off functions, we have
	\begin{align*}
		|\eta^p(x)-\eta^p(y)|\le p\|\nabla \eta\|_{L^{\infty}(B_R)}|x-y|.
	\end{align*}
	Hence,
	\begin{align}
		\label{9.5}
		I_{1,2}&\le \int_{B_R}\int_{B_R}\frac{C(p)}{R-r}\frac{|J_{q-p+2}(\tau_hu)(x)|}{|x-y|^{N+s_1-1}}\,dxdy\nonumber\\
		&\le \frac{C(N,p)}{R-r}\int_0^{2R}\frac{1}{\rho^{s_1}}\,d\rho\int_{B_R}|\tau_hu|^{q-p+1}\,dx\nonumber\\
		&\le \frac{C(N,p)}{(1-s_1)(R-r)}\int_{B_R}|\tau_hu|^{q-p+1}\,dx.
	\end{align}
	As to the nonlocal term $I_2$, we have
	\begin{align}
		\label{9.6}
		I_2&\le C(N)\int_{\frac{R-r}{2}}^{+\infty}\frac{1}{\rho^{s_1+1}}\,d\rho\int_{B_{\frac{1}{2}(R+r)}}|\tau_hu|^{q-p+1}\,dx\nonumber\\
		&\le \frac{C(N)}{s_1}\frac{1}{(R-r)^{s_1}}\int_{B_{\frac{1}{2}(R+r)}}|\tau_hu|^{q-p+1}\,dx.
	\end{align}
	
	For the term  $P$, note that
	\begin{align*}
		P&=\int_{B_R}\int_{B_R}\frac{J_p(u_h(x)-u_h(y))-J_p(u(x)-u(y))}{|x-y|^{N+s_pp}}\\
		&\qquad\qquad\qquad\times\left[ J_{q-p+2}(\tau_hu)\eta^p(x)-J_{q-p+2}(\tau_hu)\eta^p(y)\right]\,dxdy\\
		&\quad+2\int_{B_{\frac{1}{2}(R+r)}}\int_{\mathbb{R}^N\backslash B_R}\frac{J_p(u_h(x)-u_h(y))-J_p(u(x)-u(y))}{|x-y|^{N+s_pp}}J_{q-p+2}(\tau_hu)\eta^p(x)\,dxdy\\
		&=:P_1+2P_2.
	\end{align*}
	We are going to estimate $P_1$ and $P_2$. By applying Lemma \ref{lem3}, we have
	\begin{align*}
		&\quad\left[ J_p(u_h(x)-u_h(y))-J_p(u(x)-u(y))\right]\left[ J_{q-p+2}(\tau_hu)\eta^p(x)-J_{q-p+2}(\tau_hu)\eta^p(y)\right]\\
		&\ge\frac{1}{C}\left( |u_h(x)-u_h(y)|+|u(x)-u(y)|\right)^{p-2}\left( |\tau_hu(x)|+|\tau_hu(y)|\right)^{q-p}\\
		&\qquad\times|\tau_hu(x)-\tau_hu(y)|^2\left( \eta^p(x)+\eta^p(y)\right)\\
		&\quad-C\left( |\tau_hu(x)|+|\tau_hu(y)|\right)^{q-p+2}\left( |u_h(x)-u_h(y)|+|u(x)-u(y)|\right)^{p-2}|\eta^{\frac{p}{2}}(x)-\eta^{\frac{p}{2}}(y)|^2\\
		&\ge \frac{1}{C}\left( |\tau_hu(x)|+|\tau_hu(y)|\right)^p\left( |\tau_hu(x)|+|\tau_hu(y)|\right)^{q-p} \left( \eta^p(x)+\eta^p(y)\right)\\
		&\quad -C\left( |\tau_hu(x)|+|\tau_hu(y)|\right)^{q-p+2}\left( |u_h(x)-u_h(y)|+|u(x)-u(y)|\right)^{p-2}|\eta^{\frac{p}{2}}(x)-\eta^{\frac{p}{2}}(y)|^2.
	\end{align*}
	Thus, for the local term $P_1$, from Lemma \ref{lem2}, we obtain
	\begin{align}
		\label{9.6.1}
		P_1&\ge \frac{1}{C}\int_{B_R}\int_{B_R}\frac{\left|J_{\frac{q}{p}+1}(\tau_hu)(x)-J_{\frac{q}{p}+1}(\tau_hu)(y)\right|^p(\eta^p(x)+\eta^p(y))}{|x-y|^{N+s_pp}}\,dxdy\nonumber\\
		&\quad -C\int_{B_R}\int_{B_R}\frac{\left( |\tau_hu(x)|+|\tau_hu(y)|\right)^{q-p+2}\left( |u_h(x)-u_h(y)|+|u(x)-u(y)|\right)^{p-2}}{|x-y|^{N+s_pp}}\nonumber\\
		&\qquad\qquad\qquad\quad\times|\eta^{\frac{p}{2}}(x)-\eta^{\frac{p}{2}}(y)|^2\,dxdy\nonumber\\
		&=:\frac{1}{C(p,q)}P_{1,1}-C(p,q)P_{1,2}.
	\end{align}
	Recalling that
	\begin{align*}
		|\eta^{\frac{p}{2}}(x)-\eta^{\frac{p}{2}}(y)|\le \frac{C(p)}{R-r}|x-y|,
	\end{align*}
	one can estimate $P_{1,2}$ as 
	\begin{align}
		\label{9.7}
		|P_{1,2}|&\le \frac{C(p)}{(R-r)^2}\int_{B_R}\int_{B_R}\frac{\left( |\tau_hu(x)|+|\tau_hu(y)|\right)^{q-p+2}}{|x-y|^{N+s_pp-2}}\nonumber\\
		&\qquad\qquad\qquad\qquad\quad\times\left( |u_h(x)-u_h(y)|+|u(x)-u(y)|\right)^{p-2}\,dxdy\nonumber\\
		&\le \frac{C(p,q)}{(R-r)^2}\left[ \int_{B_{R+d}}\int_{B_{R+d}}\frac{|u(x)-u(y)|^q}{|x-y|^{N+\sigma q}}\,dxdy\right]^{\frac{p-2}{q}}\left[ \int_{B_{R}}\int_{B_{R}}\frac{|\tau_hu(x)|^q}{|x-y|^{\varepsilon}}\,dxdy\right]^\frac{q-p+2}{q},  
	\end{align}
	where H\"{o}lder's inequality with exponents $\frac{q}{p-2}$ and $\frac{q}{q-p+2}$ was utilized in the last line and 
	\begin{align*}
		\varepsilon=\left(N+s_pp-2-\frac{(N+\sigma q)(p-2)}{q}\right)\frac{q}{q-p+2}. 
	\end{align*}
	
	For the nonlocal term $P_2$, we make use of Lemma \ref{lem8} to deduce
	\begin{align}
		\label{9.8}
		|P_2|&\le \frac{C}{R^{s_pp}}\left( \frac{R}{R-r}\right)^{N+s_pp}\mathcal{T}^{p-2}\int_{B_R}|\tau_hu|^{q-p+2}\,dx\nonumber\\
		&\quad +\frac{C|h|}{R^{s_pp+1}}\left( \frac{R}{R-r}\right)^{N+s_pp+1}\mathcal{T}^{p-1}\int_{B_R}|\tau_hu|^{q-p+1}\,dx.
	\end{align}
	By combining \eqref{9.3}, \eqref{9.5}--\eqref{9.8} and taking $I_{1,1}\ge 0$ into consideration, we conclude
	\begin{align*}
		&\quad\int_{B_r}\int_{B_r}\frac{\left| J_{\frac{q}{p}+1}(\tau_hu)(x)-J_{\frac{q}{p}+1}(\tau_hu)(y) \right|^p }{|x-y|^{N+s_pp}}\,dxdy\\
		&\le C\frac{R^{1-s_1}}{R-r}\int_{B_R}|\tau_hu|^{q-p+1}\,dx+C\frac{1}{s_1(R-r)^{s_1}}\int_{B_R}|\tau_hu|^{q-p+1}\,dx\\
		&\quad+C\frac{R^{(N-\varepsilon)\frac{q-p+2}{q}}}{(R-r)^2}[u]^{p-2}_{W^{\sigma,q}(B_{R+d})}\left( \int_{B_R}|\tau_hu|^{q}\,dx\right)^{\frac{q-p+2}{q}}\\
		&\quad+C\frac{\mathcal{T}^{p-2}}{(R-r)^{N+s_pp}}\int_{B_R}|\tau_hu|^{q-p+2}\,dx+C\frac{\mathcal{T}^{p-1}|h|}{(R-r)^{N+s_pp+1}}\int_{B_R}|\tau_hu|^{q-p+1}\,dx.
	\end{align*}
\end{proof}

From now on, we begin our proof of the main results with small $s_p$. Our argument contains two steps. The first step is to show that for a fixed $q\ge p$, one can raise the fractional differentiability index to any number less than $\frac{s_pp}{p-1}$; the second step is to raise the integrability order from the given $p$ to any $q\ge p$ and the differentiability will be sacrificed.

\begin{lemma}
	\label{lem10}
	Let $p\ge2$, $s_p\in \left( 0,\frac{p-1}{p}\right]$ and $q\ge p$. Then, whenever $u\in W^{\sigma,q}_{\mathrm{loc}}(\Omega)$ is locally bounded weak solution to problem \eqref{1.1} in the sense of Definition \ref{def1} with
	\begin{align*}
		\sigma\in \left( \max\left\lbrace \mu^{s_p}_p,0\right\rbrace, \frac{s_pp}{p-1} \right),
	\end{align*}
	we have
	\begin{align*}
		u\in W^{\alpha,q}_{\mathrm{loc}}(\Omega)
	\end{align*}
	for any $\alpha\in (\sigma,\beta)$, where
	\begin{align*}
		\beta:=\left( 1-\frac{p-1}{q}\right)\sigma+\frac{s_pp}{q}.
	\end{align*}
	Moreover, there exists a constant $C=C(N, p, q, s_1, s_p, \sigma, \alpha)$ such that for any ball $B_R\equiv B_R(x_0)\subset\subset\Omega$ with $R\in (0,1)$ and any $r\in (0,R)$,
	\begin{align*}
		[u]^q_{W^{\alpha,q}(B_r)}\le \frac{C}{(R-r)^{N+2q+2}}\left( \mathcal{T}+[u]_{W^{\sigma,q}(B_R)}+1\right)^q.
	\end{align*}
\end{lemma}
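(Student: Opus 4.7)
The plan is to feed the $W^{\sigma,q}$ hypothesis into the energy inequality of Proposition~\ref{pro9} in order to upgrade the differentiability in two stages: first producing a second-order finite-difference estimate on $u$, then a first-order one, and finally a fractional Sobolev seminorm bound via Lemmas~\ref{lem6} and~\ref{lem7}. Fix $B_{R+d}\subset\subset\Omega$ with $r\in(0,R)$, $d\le(R-r)/4$, and let $h\in B_d\setminus\{0\}$. The left-hand side of Proposition~\ref{pro9} is the $W^{s_p,p}(B_r)$-seminorm raised to the $p$-th power of $J_{q/p+1}(\tau_hu)=|\tau_hu|^{q/p-1}\tau_hu$. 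Applying Lemma~\ref{lem2} with $\gamma=q/p$ together with the trivial bound $|a-b|\le|a|+|b|$ yields
\begin{align*}
	\left|J_{\frac{q}{p}+1}(\tau_hu)(x)-J_{\frac{q}{p}+1}(\tau_hu)(y)\right|^p\ge C(p,q)\,|\tau_hu(x)-\tau_hu(y)|^q,
\end{align*}
so the left-hand side controls $[\tau_hu]_{W^{s_pp/q,q}(B_r)}^q$.

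For the right-hand side, I would use Lemma~\ref{lem5} combined with H\"older's inequality to obtain
\begin{align*}
	\int_{B_R}|\tau_hu|^m\,dx\le C|h|^{\sigma m}\bigl(1+[u]_{W^{\sigma,q}(B_{R+d})}+\mathcal{T}\bigr)^q
\end{align*}
for each of the relevant exponents $m\in\{q-p+1,\,q-p+2,\,q\}$. Substituting into the five summands on the right-hand side of Proposition~\ref{pro9} produces the powers $|h|^{\sigma(q-p+1)}$, $|h|^{\sigma(q-p+2)}$ and $|h|^{1+\sigma(q-p+1)}$. Since $\sigma<1$ and $|h|<1$, the dominant (smallest) exponent is $\sigma(q-p+1)$, coming precisely from the $1$-structure terms of the inequality; tracking the accompanying constants leads to
\begin{align*}
	[\tau_hu]_{W^{s_pp/q,q}(B_r)}^q\le \frac{C|h|^{\sigma(q-p+1)}}{(R-r)^{N+s_pp+1}}\bigl(\mathcal{T}+[u]_{W^{\sigma,q}(B_R)}+1\bigr)^q.
\end{align*}

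Next I apply Lemma~\ref{lem5} once more, this time to the function $\tau_hu$ with differentiability index $s_pp/q\in(0,1)$, choosing the increment equal to $h$. This converts the fractional seminorm estimate into the second-order difference inequality
\begin{align*}
	\int_{B_{r'}}|\tau_h(\tau_hu)|^q\,dx\le C\,(R-r)^{-\kappa'}\,|h|^{s_pp+\sigma(q-p+1)}\bigl(\mathcal{T}+[u]_{W^{\sigma,q}(B_R)}+1\bigr)^q
\end{align*}
on a slightly smaller ball $B_{r'}$, that is, $C|h|^{\beta q}\cdot(\cdots)$ with $\beta=\frac{s_pp}{q}+\sigma\frac{q-p+1}{q}$, which is exactly the threshold appearing in the statement. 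The hypothesis $\sigma<s_pp/(p-1)$ together with $s_p\le(p-1)/p$ force $\beta<1$, so Lemma~\ref{lem6} upgrades this to the first-order bound $\int_{B_{r''}}|\tau_hu|^q\,dx\le C|h|^{\beta q}\cdot(\text{norms})$, and a single application of Lemma~\ref{lem7} then yields $u\in W^{\alpha,q}(B_{r'''})$ for every $\alpha\in(\sigma,\beta)$, with the quantitative bound as stated.

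The main technical hurdle is the radius bookkeeping: Proposition~\ref{pro9} already produces factors $(R-r)^{-N-s_pp-1}$, the two applications of Lemma~\ref{lem5} add further $d^{-q-s_pp}$-type factors, and Lemmas~\ref{lem6}--\ref{lem7} introduce $d^{-q}$ and $d^{-\alpha q}$. All of these collapse into a single $(R-r)^{-(N+2q+2)}$-type exponent once one chooses $d\sim R-r$ and introduces a chain of nested radii $r<r'''<r''<r'<R$. A subtler point is that the gain $\beta-\sigma=\frac{s_pp-(p-1)\sigma}{q}$ is strictly positive precisely under the hypothesis $\sigma<\frac{s_pp}{p-1}$; this is the driving force behind the forthcoming iteration in Lemma~\ref{lem11}, and the fact that the smallest power $|h|^{\sigma(q-p+1)}$ comes from the $1$-structure (rather than the $p$-structure exponent $\sigma(q-p+2)$) is what ultimately caps the attainable differentiability at $\frac{s_pp}{p-1}$ in this regime.
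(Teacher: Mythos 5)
Your proposal is correct and follows essentially the same route as the paper: feed the $W^{\sigma,q}$-regularity into Proposition~\ref{pro9}, produce the second-order difference bound $\int_{B_r}|\tau_h(\tau_hu)|^q\,dx\lesssim |h|^{s_pp+\sigma(q-p+1)}$ with the $1$-structure supplying the dominant exponent, and then apply Lemmas~\ref{lem6} and~\ref{lem7}. The only cosmetic difference is that you invoke the Lemma~\ref{lem2} inequality to pass from $[J_{q/p+1}(\tau_hu)]^p_{W^{s_p,p}}$ to $[\tau_hu]^q_{W^{s_pp/q,\,q}}$ before applying Lemma~\ref{lem5}, whereas the paper applies Lemma~\ref{lem5} to $J_{q/p+1}(\tau_hu)$ in $W^{s_p,p}$ first and then uses the pointwise bound $|\tau_h(J_{q/p+1}(\tau_hu))|\gtrsim|\tau_h(\tau_hu)|^{q/p}$; these two orderings are equivalent.
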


\begin{proof}
	To begin with, we will apply the energy estimate in Proposition \ref{pro9}. We define $d:=\frac{1}{7}(R-r)$ and replace $r,R$ by $\tilde{r}:=r+d$ and $\tilde{R}=:R-d$ to obtain
	\begin{align*}
		&\quad\int_{B_{\tilde{r}}}\int_{B_{\tilde{r}}}\frac{\left| J_{\frac{q}{p}+1}(\tau_hu)(x)-J_{\frac{q}{p}+1}(\tau_hu)(y)\right|^p}{|x-y|^{N+s_pp}}\,dxdy\\
		&\le \frac{C}{(\tilde{R}-\tilde{r})}\int_{B_{\tilde{R}}}|\tau_hu|^{q-p+1}\,dx+ \frac{C}{(\tilde{R}-\tilde{r})^{s_1}}\int_{B_{\tilde{R}}}|\tau_hu|^{q-p+1}\,dx\\
		&\quad+\frac{C}{(\tilde{R}-\tilde{r})^{2}}[u]^{p-2}_{W^{\sigma,q}(B_{\tilde{R}})}\left( \int_{B_{\tilde{R}}}|\tau_hu|^{q}\,dx\right)^{\frac{q-p+2}{q}}\\
		&\quad+\frac{C\mathcal{T}^{p-2}}{(\tilde{R}-\tilde{r})^{N+s_pp}}\int_{B_{\tilde{R}}}|\tau_hu|^{q-p+2}\,dx+\frac{C\mathcal{T}^{p-1}|h|}{(\tilde{R}-\tilde{r})^{N+s_pp+1}}\int_{B_{\tilde{R}}}|\tau_hu|^{q-p+1}\,dx,
	\end{align*}
	where we make use of Lemma \ref{lem4} to estimate
	\begin{align*}
		\|u\|_{L^\infty(B_{\tilde{R}})}+\mathrm{Tail}(u;x_0,\tilde{R})&\le C(N,p)\left( \frac{R}{\tilde{R}}\right)^N\left( \|u\|_{L^\infty(B_R)}+\mathrm{Tail}(u;x_0,R)\right)\\
		&\le C(N,p)\left( \frac{7}{6}\right)^N\left( \|u\|_{L^\infty(B_R)}+\mathrm{Tail}(u;x_0,R)\right).
	\end{align*}
	In view of Lemma \ref{lem5} and $u\in W^{\sigma,q}(B_R)$, we have
	\begin{align}
		\label{10.1}
		\int_{B_{\tilde{R}}}|\tau_hu|^q\,dx\le C|h|^{\sigma q}\left( (1-\sigma)[u]^q_{W^{\sigma,q}(B_{\tilde{R}+d})}+\frac{7R^N}{\sigma(R-r)^q}\|u\|_{L^{\infty}(B_{\tilde{R}})}\right). 
	\end{align}
	Therefore, from H\"{o}lder's inequality, we obtain
	\begin{align}
		\label{10.2}
		&\quad\int_{B_{\tilde{r}}}\int_{B_{\tilde{r}}}\frac{\left| J_{\frac{q}{p}+1}(\tau_hu)(x)-J_{\frac{q}{p}+1}(\tau_hu)(y)\right|^p}{|x-y|^{N+s_pp}}\,dxdy\nonumber\\
		&\le \frac{C}{(\tilde{R}-\tilde{r})}\left( \int_{B_{\tilde{R}}}|\tau_hu|^{q}\,dx\right)^{\frac{q-p+1}{q}}+\frac{C}{(\tilde{R}-\tilde{r})^{s_1}}\left( \int_{B_{\tilde{R}}}|\tau_hu|^{q}\,dx\right)^{\frac{q-p+1}{q}}\nonumber\\
		&\quad+\frac{C\mathcal{T}^{p-1}|h|}{(\tilde{R}-\tilde{r})^{N+s_pp+1}}\left( \int_{B_{\tilde{R}}}|\tau_hu|^{q}\,dx\right)^{\frac{q-p+1}{q}}\nonumber\\
		&\quad+\frac{C}{(\tilde{R}-\tilde{r})^{2}}[u]^{p-2}_{W^{\sigma,q}(B_{\tilde{R}})}\left( \int_{B_{\tilde{R}}}|\tau_hu|^{q}\,dx\right)^{\frac{q-p+2}{q}}+\frac{C\mathcal{T}^{p-2}}{(\tilde{R}-\tilde{r})^{N+s_pp}}\left( \int_{B_{\tilde{R}}}|\tau_hu|^{q}\,dx\right)^{\frac{q-p+2}{q}}.
	\end{align}
	For the left-hand side of \eqref{10.2}, Lemma \ref{lem5} implies, for any $\lambda\in B_d\backslash\left\lbrace 0\right\rbrace $,
	\begin{align}
		\label{10.3}
		\int_{B_r}\left| \tau_\lambda\left( J_{\frac{q}{p}+1}(\tau_hu)\right) \right|^p\,dx&\le C(1-s_p)|h|^{s_pp}\left[ J_{\frac{q}{p}+1}(\tau_hu)\right]^p_{W^{s_p,p}(B_{\tilde{r}})}\nonumber\\
		&\le\frac{C|h|^{s_pp}}{s_p(\tilde{R}-\tilde{r})^p}\int_{B_{\tilde{r}}}|\tau_hu|^q\,dx .
	\end{align}
	From \eqref{10.1}--\eqref{10.3}, we conclude
	\begin{align}
		\label{10.4}
		&\quad\int_{B_r}\left| \tau_\lambda\left( J_{\frac{q}{p}+1}(\tau_hu)\right) \right|^p\,dx\nonumber\\
		&\le \frac{C|\lambda|^{s_pp}\left( \mathcal{T}+1\right) ^{p-1}}{(\tilde{R}-\tilde{r})^{N+s_pp+1}}\left( \int_{B_{\tilde{R}}}|\tau_hu|^{q}\,dx\right)^{\frac{q-p+1}{q}}\nonumber\\
		&\quad+\frac{C|\lambda|^{s_pp}\left( \mathcal{T}+[u]_{W^{\sigma,q}(B_{\tilde{R}})}\right) ^{p-1}}{(\tilde{R}-\tilde{r})^{N+s_pp}}\left( \int_{B_{\tilde{R}}}|\tau_hu|^{q}\,dx\right)^{\frac{q-p+2}{q}}+\frac{C|\lambda|^{s_pp}}{s_p(\tilde{R}-\tilde{r})^{p}}\int_{B_{\tilde{r}}}|\tau_hu|^{q}\,dx\nonumber\\
		&\le \frac{C|\lambda|^{s_pp}\left( \mathcal{T}+1\right)^{p-1} }{(\tilde{R}-\tilde{r})^{N+s_pp+1}}\left[ |h|^{\sigma q}\left( [u]^q_{W^{\sigma,q}(B_{\tilde{R}+d})}+\frac{1}{\sigma(R-r)^q}\|u\|_{L^\infty(B_{\tilde{R}})} \right)\right]^{\frac{q-p+1}{q}} \nonumber\\
		&\quad+\frac{C|\lambda|^{s_pp}\left( \mathcal{T}+[u]_{W^{\sigma,q}(B_{\tilde{R}})}\right)^{p-1} }{(\tilde{R}-\tilde{r})^{N+s_pp}}\left[ |h|^{\sigma q}\left( [u]^q_{W^{\sigma,q}(B_{\tilde{R}+d})}+\frac{1}{\sigma(R-r)^q}\|u\|_{L^\infty(B_{\tilde{R}})} \right) \right]^{\frac{q-p+2}{q}} \nonumber\\
		&\quad+\frac{C|\lambda|^{s_pp}}{s_p(\tilde{R}-\tilde{r})^{p}}\left[  |h|^{\sigma q}\left( [u]^q_{W^{\sigma,q}(B_{\tilde{R}+d})}+\frac{1}{\sigma(R-r)^q}\|u\|_{L^\infty(B_{\tilde{R}})} \right) \right].
	\end{align}
	By choosing $\lambda=h$ and observing that
	\begin{align*}
		C(p,q)\left| \tau_h\left( J_{\frac{q}{p}+1}(\tau_hu)\right) \right|\ge \left| \tau_h\left( \tau_hu\right) \right|^{\frac{q}{p}} , 
	\end{align*}
	the left-hand side of \eqref{10.4} can be estimated from blow as 
	\begin{align*}
		\int_{B_r}\left| \tau_h\left( \tau_hu\right) \right|^q\,dx\le C(p,q)\int_{B_r}\left| \tau_h\left( J_{\frac{q}{p}+1}(\tau_hu)\right) \right|^p\,dx.
	\end{align*}
	Therefore, note that $|h|<R<1$, from \eqref{10.4} we have
	\begin{align*}
		\int_{B_r}\left| \tau_h\left( \tau_hu\right) \right|^q\,dx\le \frac{C\left( \mathcal{T}+[u]_{W^{\sigma,q}(B_R)}+1\right)^q }{(R-r)^{N+2q+2}}|h|^{s_pp+\sigma(q-p+1)}
	\end{align*}
	for any $h\in B_d\backslash\left\lbrace 0\right\rbrace $. Now, we apply Lemma \ref{lem6} to obtain
	\begin{align*}
		\int_{B_r}|\tau_hu|^q\,dx&\le C\left[ \frac{\left( \mathcal{T}+[u]_{W^{\sigma,q}(B_R)}+1\right)^q }{(R-r)^{N+2q+2}}+\frac{\|u\|^q_{L^\infty(B_R)}}{(R-r)^{s_pp+\sigma(q-p+1)}}\right] |h|^{s_pp+\sigma(q-p+1)}\\
		&\le \frac{C\left( \mathcal{T}+[u]_{W^{\sigma,q}(B_R)}+1\right)^q}{(R-r)^{s_pp+\sigma(q-p+1)}}|h|^{s_pp+\sigma(q-p+1)},
	\end{align*}
	for any $h\in B_{\frac{1}{2}d}\backslash\left\lbrace 0\right\rbrace $. As a result of application of Lemma \ref{lem7}, we conclude
	\begin{align*}
		\int_{B_r}\int_{B_r}\frac{|u(x)-u(y)|^q}{|x-y|^{N+\alpha q}}\,dxdy&\le C\left[ \frac{\left( \mathcal{T}+[u]_{W^{\sigma,q}(B_R)}+1\right)^q }{(\beta-\alpha)(R-r)^{N+2q+2}}+\frac{\|u\|^q_{L^\infty(B_R)}}{\alpha(R-r)^{\alpha q}}\right]\\
		&\le \frac{C}{(R-r)^{N+2q+2}}\left( \mathcal{T}+[u]_{W^{\sigma,q}(B_R)}+1\right)^q.
	\end{align*}
\end{proof}

\begin{remark}
	\label{remC}
	After the calculation, we know that the constant $C$ takes the form
	\begin{align*}
		C=\frac{\tilde{C}(N,p,q)}{s_1s_p(1-s_1)(N-\varepsilon)^\frac{q-p+2}{q}\sigma\alpha(\beta-\alpha)}
	\end{align*}
	with
	\begin{align*}
		\varepsilon=\left(N+s_pp-2-\frac{(N+\sigma q)(p-2)}{q}\right)\frac{q}{q-p+2}.
	\end{align*}
\end{remark}

In the following lemma, we iterate the improvement in fractional differentiability obtained in Lemma \ref{lem10}.
\begin{lemma}
	\label{lem11}
	Let $p\ge2$, $s_p\in\left( 0, \frac{p-1}{p}\right] $, $q\ge p$ and $\sigma\in \left( \max\left\lbrace \mu^{s_p}_p, 0\right\rbrace, \frac{s_pp}{p-1} \right) $. Suppose that $u\in W^{\sigma,q}_\mathrm{loc}(\Omega)$ is a locally bounded weak solution to problem \eqref{1.1} in the sense of Definition \ref{def1}. Then, we have
	\begin{align*}
		u\in W^{\gamma,q}_\mathrm{loc}(\Omega)
	\end{align*}
	for any $\gamma\in \left( \sigma,\frac{s_pp}{p-1}\right) $. Moreover, there exist two constants $C$ and $\kappa$ depending on $N, p, q, s_1, s_p$ and $ \gamma$, such that for any $r\in (0,R)$ with $R\in (0,1]$ and any ball $B_R\equiv B_R(x_0)\subset\subset \Omega$,
	\begin{align*}
		[u]^q_{W^{\gamma,q}(B_r)}\le \frac{C\left( \mathcal{T}+[u]_{W^{\sigma,q}(B_R)}+1\right)^q }{(R-r)^\kappa}.
	\end{align*}
\end{lemma}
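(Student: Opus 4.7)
The plan is to iterate Lemma \ref{lem10} a finite number of times to raise the differentiability index from $\sigma$ up to the prescribed $\gamma<\tfrac{s_pp}{p-1}$, while simultaneously shrinking the underlying ball from $B_R$ to $B_r$ in equal radial steps and absorbing the accumulated loss of radius into a single final power of $R-r$.

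Set $f(t):=\tfrac{q-p+1}{q}\,t+\tfrac{s_pp}{q}$, so that the output exponent $\beta$ in Lemma \ref{lem10} is precisely $f(\sigma)$. Since $q\ge p\ge 2$, the map $f$ is an affine contraction with Lipschitz constant $\tfrac{q-p+1}{q}<1$ and unique fixed point $t_\ast=\tfrac{s_pp}{p-1}$. Hence the orbit $\sigma_0:=\sigma$, $\sigma_{i+1}:=f(\sigma_i)$ is strictly increasing to $t_\ast$ (the sequence $\{\gamma_i\}$ of Remark \ref{rem1}, with a shifted initial value), and there is a smallest integer $i_0\ge 1$ with $\sigma_{i_0}>\gamma$; this $i_0$ depends only on $N,p,q,s_1,s_p,\gamma$. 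I then build intermediate exponents $\sigma=\alpha_{-1}<\alpha_0<\alpha_1<\cdots<\alpha_{i_0-1}=\gamma$ satisfying $\alpha_i\in(\alpha_{i-1},f(\alpha_{i-1}))$ at each step: writing $\alpha_i=\sigma_{i+1}-\eta_i$, this translates into $\eta_i>\tfrac{q-p+1}{q}\eta_{i-1}$, and one verifies that $\eta_0,\dots,\eta_{i_0-1}$ can be prescribed a priori (for instance, as a geometric sequence $\eta_i=\lambda^{i_0-1-i}(\sigma_{i_0}-\gamma)$ for a suitable $\lambda\in(1,\tfrac{q}{q-p+1})$) so that $\eta_{i_0-1}=\sigma_{i_0}-\gamma$ and every gap $f(\alpha_{i-1})-\alpha_i$ is bounded below by a positive constant depending only on $N,p,q,s_1,s_p,\gamma$. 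Monotonicity of the ladder together with $\sigma_0>\max\{\mu^{s_p}_p,0\}$ preserves the structural hypothesis of Lemma \ref{lem10} at every iteration level.

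Next, choose shrinking radii $R=r_0>r_1>\cdots>r_{i_0}=r$ with $r_i-r_{i+1}=(R-r)/i_0$, and at step $i\in\{0,\dots,i_0-1\}$ invoke Lemma \ref{lem10} on the pair $B_{r_{i+1}}\subset B_{r_i}$ with the $\sigma$-role played by $\alpha_{i-1}$ and the $\alpha$-role by $\alpha_i$, yielding
\begin{align*}
[u]_{W^{\alpha_i,q}(B_{r_{i+1}})}^q\le \frac{C_i}{(r_i-r_{i+1})^{N+2q+2}}\,\bigl(\mathcal T+[u]_{W^{\alpha_{i-1},q}(B_{r_i})}+1\bigr)^q,
\end{align*}
where $C_i$ depends only on $N,p,q,s_1,s_p$ and the prescribed gap widths. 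Taking $q$-th roots and chaining these $i_0$ inequalities linearly in $\mathcal T+[u]_{W^{\sigma,q}(B_R)}+1$ yields the claimed estimate, with an exponent of the form $\kappa=(N+2q+2)\,i_0$ (or any upper bound of it) and a constant $C$ depending only on $N,p,q,s_1,s_p,\gamma$.

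The main obstacle is the bookkeeping of the gap widths $f(\alpha_{i-1})-\alpha_i$: Remark \ref{remC} shows that the constant in Lemma \ref{lem10} blows up as this gap shrinks to $0$, so every intermediate step must be carried out with a gap that is strictly positive and controllable solely in terms of the final parameters. Since $i_0$ is finite and the sequence $\{\eta_i\}$ can be chosen a priori in closed form, this reduces to a finite combinatorial verification rather than any further analytic work, and no new ingredient beyond Lemma \ref{lem10} is required.
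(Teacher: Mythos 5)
Your proposal is correct and follows essentially the same approach as the paper: iterate Lemma \ref{lem10} finitely many times on a shrinking sequence of balls, arranging the intermediate exponents so that the gap $\beta-\alpha$ at each step stays bounded below by a quantity controlled by the final parameters, so that the constant from Remark \ref{remC} does not blow up. The only cosmetic difference is in the bookkeeping: the paper re-centers the affine recursion so that its fixed point is $\tilde\gamma=\tfrac12(\gamma+\tfrac{s_pp}{p-1})$ (defining $\sigma_{i+1}=(1-\tfrac{p-1}{q})\sigma_i+\tfrac{\tilde\gamma(p-1)}{q}$, which keeps the gap $\beta_i-\sigma_i=\tfrac{s_pp-\tilde\gamma(p-1)}{q}$ constant across iterations), whereas you keep the unshifted map $f$ with fixed point $\tfrac{s_pp}{p-1}$ and slide each iterate down by a geometrically decaying correction $\eta_i$ chosen so the gap remains bounded below. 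Both devices achieve the same end, and the rest (dyadic vs.\ linear radii, chaining, finiteness of $i_0$) is identical in spirit.
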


\begin{proof}
	We set $\tilde{\gamma}:=\frac{1}{2}\left( \gamma+\frac{s_pp}{p-1}\right)\in \left( \gamma,\frac{s_pp}{p-1}\right)$ and define
	\begin{align*}
		\sigma_0:=\sigma, \quad \sigma_{i+1}:=\left( 1-\frac{p-1}{q}\right)\sigma_i+\frac{\tilde{\gamma}(p-1)}{q} 
	\end{align*}
	for $i\in \mathbb{N}$. It is easy to see that $\sigma_i\rightarrow\tilde{\gamma}$ as $i\rightarrow+\infty$. Moreover, we define the sequence
	\begin{align*}
		\rho_i:=r+\frac{1}{2^i}(R-r)
	\end{align*}
	and
	\begin{align*}
		\beta_i:=\left( 1-\frac{p-1}{q}\right)\sigma_{i-1}+\frac{s_pp}{q}.
	\end{align*}
	Thus, we have
	\begin{align*}
		R>\rho_{i-1}=\frac{R}{2^{i-1}}+r\left( 1-\frac{1}{2^{i-1}}\right)>\frac{R}{2^{i-1}} 
	\end{align*}
	and
	\begin{align}
		\label{11.1}
		\frac{1}{\rho_{i-1}-\rho_{i}}<\frac{2^i}{R-r}.
	\end{align}
	
	To express the estimate exactly, we also define
	\begin{align*}
		\mathcal{T}_i:=\|u\|_{L^\infty(B_{\rho_i})}+\mathrm{Tail}(u;x_0,\rho_i).
	\end{align*}
	Lemma \ref{lem4} implies
	\begin{align}
		\label{11.2}
		\mathrm{Tail}(u;x_0,\rho_{i-1})^{p-1}\le C(N)\left( \frac{R}{\rho_{i-1}}\right)^N \mathcal{T}^{p-1}\le C(N)2^{iN}\mathcal{T}^{p-1}.
	\end{align}
	
	Now, we apply Lemma \ref{lem10} with parameters $(\alpha,\beta,\sigma,r,R)$ replaced by $(\sigma_i,\beta_i,\sigma_{i-1},\rho_i,\rho_{i-1})$ to obtain
	\begin{align}
		\label{11.2-1}
		[u]^q_{W^{\sigma_i,q}(B_{\rho_i})}\le \frac{C\left( \mathcal{T}_{i-1}+[u]_{W^{\sigma_{i-1},q}(B_{\rho_{i-1}})}+1\right)^q}{\left( \rho_{i-1}-\rho_i\right)^{N+2q+2} }.
	\end{align}
	This application is permitted as long as $[u]_{W^{\sigma_{i-1},q}(B_{\rho_{i-1}})}<+\infty$. Next, we are going to estimate terms on the right-hand side of \eqref{11.2-1} by using \eqref{11.1} and \eqref{11.2}. We deduce the following estimate
	\begin{align}
		\label{11.3}
		[u]^q_{W^{\sigma_i,q}(B_{\rho_i})}&\le C\frac{\left( C(N,p)2^{\frac{iN}{p-1}}\mathcal{T}+[u]_{W^{\sigma_{i-1},q}(B_{\rho_{i-1}})}+1\right)^q 2^{i(N+2q+2)} }{(R-r)^{N+2q+2}}\nonumber\\
		&\le C\frac{2^{iN+i(N+2q+2)}\left( \mathcal{T}+[u]_{W^{\sigma,q}(B_{\rho_{i-1}})}+1\right)^q }{(R-r)^{N+2q+2}}.
	\end{align}
	From Remark \ref{remC}, we know that the constant $C$ in \eqref{11.3} can be estimated from above, i.e., we have
	\begin{align*}
		C\le C_*:=\frac{\tilde{C}(N,p,q)}{s_1s_p(1-s_1)\left[ N-\frac{q}{q-p+2}\left( N+s_pp-2-\sigma p+2\sigma\right) \right]^\frac{q-p+2}{q}\sigma^2\left( \frac{s_pp}{p-1}-\gamma\right) }.
	\end{align*}
	Iterating the inequality \eqref{11.3}, we obtain
	\begin{align}
		\label{11.4}
		[u]^q_{W^{\sigma_i,q}(B_{\rho_i})}\le \frac{(2i+1)C^i_*2^{(Nq+N+2q+2)\left( \sum\limits_{j=1}\limits^{i}j\right) }\left( \mathcal{T}+[u]_{W^{\sigma,q}(B_R)}+1\right)^q }{(R-r)^{(N+2q+2)i}}
	\end{align}
	for any $i\in \mathbb{N}$. We denote by $i_0\in \mathbb{N}$ the smallest integer such that $\sigma_{i_0}\ge\gamma$. In other words, note that
	\begin{align*}
		\sigma_i=(\sigma-\tilde{\gamma})\left( 1-\frac{p-1}{q}\right)^i+\tilde{\gamma}, 
	\end{align*}
	we have
	\begin{align*}
		i_0:=\left[ \frac{\ln\frac{\tilde{\gamma}-\gamma}{\tilde{\gamma}-\sigma}}{\ln \frac{q-p+1}{q}}\right]+1\le \frac{\ln\frac{2}{\frac{s_pp}{p-2}-\gamma}}{\ln\frac{q-p+1}{q}}+1.
	\end{align*}
	For the norm $[u]_{W^{\gamma,q}(B_r)}$, \eqref{11.4} implies
	\begin{align*}
		[u]^q_{W^{\gamma,q}(B_r)}&\le 2^q[u]^q_{W^{\sigma_{i_0},q}(B_{\rho_{i_0}})}\\
		&\le\frac{C_*^{i_0}(2i_0+1)2^{(Nq+N+2q+2)\left( \sum\limits_{j=1}\limits^{i_0}j\right) }\left( \mathcal{T}+[u]_{W^{\sigma,q}(B_R)}+1\right)^q }{(R-r)^{(N+2q+2)i_0}}.
	\end{align*}
	By letting $C=C_*^{i_0}(2i_0+1)2^{(Nq+N+2q+2)\left( \sum\limits_{j=1}\limits^{i_0}j\right) }$ and $\kappa =(N+2q+2)i_0$, we conclude the claim.
\end{proof}

In the rest of this section, we shall prove the Sobolev regularity result as stated in Theorem \ref{th12}, i.e., the almost $W^{\frac{s_pp}{p-1},q}_\mathrm{loc}$-regularity of weak solutions for any $q\ge p$. Consequently, this result implies H\"{o}lder regularity of weak solutions by an application of Lemma \ref{lemMorrey1}. Furthermore, in the proof of Theorem \ref{th12}, we will utilize the fact that the weak solution $u$ is locally bounded in $\Omega$, this fact enables us to obtain 
\begin{align*}
	u\in W^{\gamma,m}_\mathrm{loc}(\Omega)\Rightarrow u\in W^{\frac{\gamma m}{q},q}_\mathrm{loc}(\Omega)
\end{align*}
for $q>m\ge p$.

\begin{proof}[Proof of Theorem \ref{th12}] We first consider the case $s_p\in \left( 0,\frac{2}{p}\right] $. By reason of $u\in L^\infty_\mathrm{loc}(\Omega)$, we have $u\in W^{\frac{s_pp}{q},q}_\mathrm{loc}(\Omega)$ and the following estimate
	\begin{align*}
		[u]^q_{W^{\frac{s_pp}{q},q}(B_R)}
		&= \int_{B_R}\int_{B_R}\frac{|u(x)-u(y)|^{q-p}|u(x)-u(y)|^p}{|x-y|^{N+s_pp}}\,dxdy\\
		&\le 2^{q-p}\|u\|^{q-p}_{L^\infty(B_R)}[u]^p_{W^{s_p,p}(B_R)}\\
		&\le C \left( [u]_{W^{s_p,p}(B_R)}+\|u\|_{L^\infty(B_R)}\right)^q. 
	\end{align*}
	From Lemma \ref{lem11}, we set $\sigma=\frac{s_pp}{q}>\mu^{s_p}_p$ to obtain
	\begin{align*}
		[u]^q_{W^{\gamma,q}(B_r)}&\le \frac{C\left( \mathcal{T}+[u]_{W^{\frac{s_pp}{q},q}(B_R)}+1\right)^q }{(R-r)^\kappa}\\
		&\le \frac{C\left( \mathcal{T}+[u]_{W^{s_p,p}(B_R)}+1\right)^q}{(R-r)^\kappa}.
	\end{align*}
	
	In the case $s_p\in \left( \frac{2}{p},\frac{p-1}{p}\right]$, this will happen when $p>3$. Note that $\max\left\lbrace \mu^{s_p}_p,0\right\rbrace=\mu^{s_p}_p$, for $i\in\mathbb{N}$, we define
	\begin{align*}
		\rho_i:=r+\frac{1}{2^i}(R-r),\quad\mathcal{T}_i:=\|u\|_{L^\infty(B_{\rho_i})}+\mathrm{Tail}(u;x_0,\rho_i)
	\end{align*}
	and the sequence of exponents
	\begin{align*}
		q_i:=\alpha^ip
	\end{align*}
	for some $\alpha\in \left( 1,\frac{s_p(p-2)}{s_pp-2}\right) $ that will be determined later. From now on, we are going to prove
	\begin{align}
		\label{12.1}
		[u]^{q_i}_{W^{\gamma,q_i}(B_{\rho_i})}\le \frac{\tilde{C}_i\left( \mathcal{T}+[u]_{W^{s_p,p}(B_R)}+1\right)^{q_i}}{(R-r)^{\tilde{\kappa}_i}}
	\end{align}
	for any $i\in\mathbb{N}$ by an induction argument.
	
	For $i=0$, since $s_p>\mu^{s_p}_p$, we apply Lemma \ref{lem11} with $(\sigma,q,r,R)$ replaced by $(s_p,p,\rho_0,R)$ to deduce
	\begin{align*}
		[u]^p_{W^{\gamma,p}(B_{\rho_0})}\le \frac{\tilde{C}_0\left( \mathcal{T}+[u]_{W^{s_p,p}(B_R)}+1\right)^{p}}{(R-r)^{\tilde{\kappa}_0}}.
	\end{align*}
	
	For the induction step we assume that $\text{\eqref{12.1}}_i$ holds true, which implies $[u]_{W^{\gamma,q_i}(B_{\rho_i})}<+\infty$. One can obtain $u\in W^{\gamma\frac{q_i}{q_{i+1}},q_{i+1}}(B_{\rho_i})$ since $u$ is locally bounded in $\Omega$ and
	\begin{align*}
		[u]^{q_{i+1}}_{W^{\gamma\frac{q_i}{q_{i+1}},q_{i+1}}(B_{\rho_i})}&=\int_{B_{\rho_i}}\int_{B_{\rho_i}}\frac{|u(x)-u(y)|^{q_{i+1}-q_i}|u(x)-u(y)|^{q_i}}{|x-y|^{\gamma q_i}}\,dxdy\\
		&\le [u]^{q_{i+1}}_{W^{\gamma,q_i}(B_{\rho_i})}+2^{q_{i+1}}\|u\|^{q_{i+1}}_{L^\infty(B_R)}.
	\end{align*}
	Observe that $\frac{\gamma}{\alpha}>\mu^{s_p}_p$, we can apply Lemma \ref{lem11} to have
	\begin{align}
		\label{12.2}
		[u]^{q_{i+1}}_{W^{\gamma,q_{i+1}}(B_{\rho_{i+1}})}&\le C_{i+1}\left( \mathcal{T}+[u]_{W^{\gamma\frac{q_i}{q_{i+1}},q_{i+1}}(B_{\rho_i})}+1\right)^{q_{i+1}}\frac{2^{i\kappa_{i+1}}}{(R-r)^{\kappa_{i+1}}}\nonumber\\
		&\le 2^{q_{i+1}}C_{i+1}\left( \mathcal{T}+[u]_{W^{\gamma,q_{i}}(B_{\rho_i})}+1\right)^{q_{i+1}}\frac{2^{i\kappa_{i+1}}}{(R-r)^{\kappa_{i+1}}}\nonumber\\
		&\le 2^{q_{i+1}}C_{i+1}\left( \mathcal{T}+\frac{\tilde{C}_i^\frac{1}{q_i}\left( \mathcal{T}+[u]_{W^{s_p,p}(B_R)}+1\right) }{(R-r)^{\frac{\tilde{\gamma}_i}{q_i}}}+1\right)^{q_{i+1}}\frac{2^{i\kappa_{i+1}}}{(R-r)^{\kappa_{i+1}}}\nonumber\\
		&\le 2^{2q_{i+1}}C_{i+1}\tilde{C}_i^\alpha\left( \mathcal{T}+[u]_{W^{s_p,p}(B_R)}+1\right) ^{q_{i+1}}\frac{2^{i\kappa_{i+1}}}{(R-r)^{\kappa_{i+1}+\tilde{\kappa}_{i}\alpha}}\nonumber\\
		&\le \frac{\tilde{C}_{i+1}\left( \mathcal{T}+[u]_{W^{s_p,p}(B_R)}+1\right) ^{q_{i+1}}}{(R-r)^{\tilde{\kappa}_{i+1}}},
	\end{align}
	where we set
	\begin{align*}
		\tilde{C}_{i+1}:=2^{2q_{i+1}}C_{i+1}\tilde{C}_i^\alpha
	\end{align*}
	and
	\begin{align*}
		\tilde{\kappa}_{i+1}:=\kappa_{i+1}+\tilde{\kappa}_{i}\alpha
	\end{align*}
	in the last line.
	Thus, we prove $\text{\eqref{12.1}}_{i+1}$ and finish the induction argument.
	
	When $q=p$, our claim is $\text{\eqref{12.1}}_{0}$. For the case $q>p$, we choose $i_0\in \mathbb{N}$ as the smallest integer such that
	\begin{align*}
		\left( \frac{s_p(p-2)}{s_pp-2}\right)^{i_0}>\frac{q}{p}. 
	\end{align*}
	That is,
	\begin{align*}
		i_0:=\left[ \frac{\ln\frac{q}{p}}{\ln\frac{s_p(p-2)}{s_pp-2}}\right]+1.
	\end{align*}
	We can define $\alpha:=\left( \frac{q}{p}\right)^\frac{1}{i_0}\in \left( 1,\frac{s_p(p-2)}{s_pp-2}\right)$ and deduce the result from $\text{\eqref{12.1}}_{i_0}$.
\end{proof}

Now we can obtain the H\"{o}lder regularity result.

\begin{proof}[Proof of Corollary \ref{cor13}]
	Let $\tilde{\gamma}=\frac{1}{2}\left( \gamma+\frac{s_pp}{p-1}\right) $ and $q=\frac{2N}{\frac{s_pp}{p-1}-\gamma}$. We apply the Morrey type embedding Lemma \ref{lemMorrey1} with $\gamma$ replaced by $\tilde{\gamma}$ and the claim of Theorem \ref{th12} to obtain
	\begin{align*}
		[u]_{C^{0,\gamma}(B_r)}=[u]_{C^{0,\tilde{\gamma}-\frac{N}{q}}(B_r)}&\le C[u]_{W^{\tilde{\gamma},q}(B_r)}\le \frac{C\left( \mathcal{T}+[u]_{W^{s_p,p}(B_R)}+1\right) }{(R-r)^\kappa}
	\end{align*}
	for some $C$ and $\kappa$ depending on $N, p, q, s_1, s_p, \gamma$.
\end{proof}

\section{The case $s_p\in \left( \frac{p-1}{p},1\right) $}
\label{sec5}

In this section, we deal with the case with the parameter $s_p$ big. Our proof is divided into two steps. In the first step, we will show that the gradient of weak solution $u$ exists, i.e.,
\begin{align*}
	u\in W^{s_p,p}_\mathrm{loc}(\Omega)\Rightarrow u\in W^{1,p}_\mathrm{loc}(\Omega).
\end{align*}
In the second step, we are going to demonstrate the almost Lipschitz regularity result, that is,
\begin{align*}
	u\in W^{1,p}_\mathrm{loc}(\Omega)\Rightarrow u\in W^{1,q}_\mathrm{loc}(\Omega)
\end{align*}
for any $q\ge p$. To show the weak solution $u$ admits a gradient $\nabla u$, we remark that  we will first prove that $u\in W^{\alpha,p}_\mathrm{loc}(\Omega)$ for some $\alpha>p-s_pp$ in Lemma \ref{lem16} and then we will show the existence of $\nabla u$ in $\left( L^p_\mathrm{loc}(\Omega)\right) ^N$.
\begin{lemma}
	\label{lem15}
	Let $p\ge2$, $s_p\in \left( \frac{p-1}{p}, 1\right) $ and  let $u\in W^{\sigma,p}_\mathrm{loc}(\Omega)$ be a locally bounded weak solution to problem \eqref{1.1} in the sense of Definition \ref{def1} with $\sigma\in \left( \max\left\lbrace \mu^{s_p}_p,0\right\rbrace ,1\right) $. Then, in the case $\sigma\le p-s_pp$, we have
	\begin{align*}
		u\in W^{\alpha,p}_\mathrm{loc}(\Omega)
	\end{align*}
	for any $\alpha\in (\sigma, \beta)$, where
	\begin{align*}
		\beta=\frac{\sigma}{p}+s_p.
	\end{align*}
	Moreover, there exists a constant $C=C(N,p,s_1,s_p,\sigma,\alpha)$ such that for any ball $B_R\equiv B_R(x_0)\subset\subset\Omega$ with $R\in (0,1)$ and any $r\in (0,R)$,
	\begin{align*}
		[u]^p_{W^{\alpha,p}(B_r)}\le \frac{C\left( \mathcal{T}+[u]_{W^{s_p,p}(B_R)}+1\right)^p }{(R-r)^{N+2p+2}}.
	\end{align*}
\end{lemma}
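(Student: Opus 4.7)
The plan is to specialize the argument of Lemma \ref{lem10} to the exponent $q = p$, with the hypothesis $\sigma \le p - s_p p$ serving precisely to guarantee that the resulting differentiability index $\beta = \sigma/p + s_p$ stays in $(0, 1]$, the range admissible for Lemma \ref{lem7}. Concretely, I would first apply Proposition \ref{pro9} with $q = p$. Since $J_{q/p+1} = J_2$ is the identity, the left-hand side collapses to $[\tau_h u]^p_{W^{s_p, p}(B_{\tilde r})}$, where $B_{\tilde r}$ denotes an intermediate ball between $B_r$ and $B_R$. On the right-hand side the exponents $q - p + 1 = 1$ and $q - p + 2 = 2$ appear, so H\"older's inequality converts every occurring $L^1$- and $L^2$-norm of $\tau_h u$ into powers of $\|\tau_h u\|_{L^p(B_R)}$, which Lemma \ref{lem5} in turn bounds by $C|h|^{\sigma p}([u]^p_{W^{\sigma,p}(B_{R+d})} + \text{lower order})$. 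The dominant contribution is of order $|h|^\sigma$ (the terms $|h|^{2\sigma}$ and $|h|^{1+\sigma}$ are subordinate since $|h|<R<1$ and $\sigma \in (0,1)$), yielding an intermediate bound of the shape $[\tau_h u]^p_{W^{s_p, p}(B_{\tilde r})} \le C|h|^\sigma$.

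A second application of Lemma \ref{lem5}, this time to $\tau_h u$ with fractional index $s_p$ and step $\lambda = h$, gives
\begin{equation*}
\int_{B_r}|\tau_h(\tau_h u)|^p\, dx \le \frac{C\left(\mathcal T + [u]_{W^{\sigma, p}(B_R)} + 1\right)^p}{(R-r)^{N+2p+2}}\,|h|^{s_p p + \sigma}.
\end{equation*}
Lemma \ref{lem6} then turns this double-difference control into a single-difference estimate with the same exponent in $|h|$, and Lemma \ref{lem7} applied with $\gamma = \beta = s_p + \sigma/p \le 1$ delivers $u \in W^{\alpha, p}_{\mathrm{loc}}(\Omega)$ for every $\alpha \in (\sigma, \beta)$, together with the desired quantitative bound. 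Since $u$ is a weak solution it already lies in $W^{s_p, p}_{\mathrm{loc}}(\Omega)$, so by a simple embedding when $\sigma \le s_p$, or by interpolation with the local $L^\infty$-bound when $\sigma > s_p$, one may replace $[u]_{W^{\sigma, p}(B_R)}$ by $[u]_{W^{s_p, p}(B_R)} + \|u\|_{L^\infty(B_R)}$, recovering the form displayed in the lemma.

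The main point of care is the endpoint $\sigma = p - s_p p$, where $\beta = 1$ and one must invoke the $\gamma = 1$ case of Lemma \ref{lem6} with some $\lambda < 1$, which is precisely why the conclusion is stated as $\alpha < \beta$ and not $\alpha \le \beta$. A secondary delicate point, absent from the higher-integrability argument of Lemma \ref{lem10}, is that the $1$-growth contributions now enter only linearly in $|\tau_h u|$ (since $q - p + 1 = 1$), so the passage through H\"older's inequality must be performed carefully to match the $L^p$ scale produced by Lemma \ref{lem5}; this is where the hypothesis $\sigma > \max\{\mu^{s_p}_p, 0\}$ is implicitly used, as in Lemma \ref{lem10}, to guarantee the positivity of the Gagliardo-style exponent that appears inside the pointwise algebraic inequality of Lemma \ref{lem3}.
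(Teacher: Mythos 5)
Your proof is correct and reproduces the paper's argument essentially step for step: specialize the energy estimate of Proposition \ref{pro9} to $q=p$ (so $J_{q/p+1}$ is the identity and the left-hand side becomes $[\tau_h u]^p_{W^{s_p,p}}$), bound the right-hand side via H\"older and Lemma \ref{lem5} to get $[\tau_h u]^p_{W^{s_p,p}(B_{\tilde r})}\lesssim |h|^\sigma$, apply Lemma \ref{lem5} once more to $\tau_h u$ with step $\lambda=h$ to reach the double-difference bound $\int_{B_r}|\tau_h(\tau_hu)|^p\,dx\lesssim |h|^{s_pp+\sigma}$, and then descend through Lemmas \ref{lem6} and \ref{lem7}, distinguishing $\beta<1$ from $\beta=1$ as the paper does.

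Two peripheral remarks in your final paragraph are misdirected, though neither affects the main argument. First, the hypothesis $\sigma>\max\{\mu^{s_p}_p,0\}$ has nothing to do with the $1$-growth term or a ``positivity of a Gagliardo exponent inside Lemma \ref{lem3}''; it enters through the H\"older estimate \eqref{9.7} for the local $p$-growth contribution $P_{1,2}$, ensuring that the exponent $\varepsilon$ there stays below $N$ so the integral $\int_{B_R}\int_{B_R}|\tau_hu(x)|^q|x-y|^{-\varepsilon}\,dxdy$ converges. Second, the proposed ``interpolation with the local $L^\infty$ bound'' to replace $[u]_{W^{\sigma,p}(B_R)}$ by $[u]_{W^{s_p,p}(B_R)}$ when $\sigma>s_p$ is not a valid step: a higher-order Gagliardo seminorm cannot be controlled by a lower-order one together with $L^\infty$. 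In fact the paper's proof of this lemma terminates with $[u]_{W^{\sigma,p}(B_R)}$ on the right-hand side, and the $[u]_{W^{s_p,p}(B_R)}$ appearing in the displayed statement appears to be a typo --- no such conversion should be attempted.
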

\begin{proof}
	We begin our proof with the following inequality that can be obtained in the proof of Lemma \ref{lem10},
	\begin{align}
		\label{15.1}
		\int_{B_r}|\tau_h(\tau_hu)|^p\,dx\le \frac{C\left( \mathcal{T}+[u]_{W^{\sigma,p}(B_R)}+1\right)^p }{(R-r)^{N+2p+2}}|h|^{s_pp+\sigma}
	\end{align}
	for any $h\in B_d\backslash\left\lbrace 0\right\rbrace $ with $d=\frac{1}{7}(R-r)$.
	
	Since $s_pp+\sigma\le p$, one can see that
	\begin{align*}
		\beta=\frac{\sigma}{p}+s_p\le1.
	\end{align*} 
	Thus, from \eqref{15.1} and Lemma \ref{lem6}, we have, in the case $\beta=1$,
	\begin{align}
		\label{15.2}
		\int_{B_r}|\tau_hu|^p\,dx&\le C|h|^{\tilde{\alpha} p}\left[ \frac{\left( \mathcal{T}+[u]_{W^{\sigma,p}(B_R)}+1\right)^p}{(R-r)^{N+2p+2}(1-\tilde{\alpha})}(R-r)^{(1-\alpha)p}+\frac{R^N\|u\|_{L^\infty(B_R)}}{(R-r)^{\tilde{\alpha}p}}\right] \nonumber\\
		&\le \frac{C\left( \mathcal{T}+[u]_{W^{\sigma,p}(B_R)}+1\right)^p}{(R-r)^{N+2p+2}}|h|^{\tilde{\alpha}p}
	\end{align}
	for any $h\in B_{\frac{1}{2}d}\backslash\left\lbrace 0\right\rbrace $, where $\tilde{\alpha}=\frac{1+\alpha}{2}$. In the case $\beta<1$, one can deduce the following inequality from \eqref{15.1} and Lemma \ref{lem6},
	\begin{align}
		\label{15.3}
		\int_{B_r}|\tau_hu|^p\,dx&\le C|h|^{\beta p}\left[ \frac{\left( \mathcal{T}+[u]_{W^{\sigma,p}(B_R)}+1\right)^p }{(1-\beta)^p(R-r)^{N+2p+2}}+\frac{R^N\|u\|^p_{L^\infty(B_R)}}{(R-r)^{\beta p}}\right] \nonumber\\
		&\le\frac{C\left( \mathcal{T}+[u]_{W^{\sigma,p}(B_R)}+1\right)^p}{(R-r)^{N+2p+2}}|h|^{\beta p}
	\end{align}
	for any $h\in B_{\frac{1}{2}d}\backslash\left\lbrace 0\right\rbrace $.
	
	Therefore, Lemma \ref{lem7} enables us to derive
	\begin{align*}
		\int_{B_r}\int_{B_r}\frac{|u(x)-u(y)|^p}{|x-y|^{N+\alpha p}}\,dxdy\le C\left[ \frac{\left( \mathcal{T}+[u]_{W^{\sigma,p}(B_R)}+1\right)^p}{(\beta-\alpha)(R-r)^{N+2p+2}}+\frac{R^N\|u\|^p_{L^\infty(B_R)}}{\alpha(R-r)^{\alpha p}}\right] 
	\end{align*}
	from \eqref{15.2} and \eqref{15.3}, where $\tilde{\alpha}-\alpha=\frac{1}{2}(\beta-\alpha)$ is taken into consideration.
	
	Hence, after some suitable calculation, we prove the claim
	\begin{align*}
		\int_{B_r}\int_{B_r}\frac{|u(x)-u(y)|^p}{|x-y|^{N+\alpha p}}\,dxdy\le \frac{C\left( \mathcal{T}+[u]_{W^{\sigma,p}(B_R)}+1\right)^p}{(R-r)^{N+2p+2}}.
	\end{align*}
\end{proof}

In the next lemma, we show that $\nabla u\in \left( L^p_\mathrm{loc}(\Omega)\right) ^N$ as long as $u\in W^{\alpha,p}_\mathrm{loc}(\Omega)$ for some $\alpha$ big.
\begin{lemma}
	\label{lem15.2}
	Let $p\ge2$, $s_p\in \left( \frac{p-1}{p},1\right) $ and let $u\in W^{\sigma,p}_\mathrm{loc}(\Omega)$ be a locally bounded weak solution to problem \eqref{1.1} in the sense of Definition \ref{def1} with $\sigma\in \left( \max\left\lbrace \mu^{s_p}_p,0\right\rbrace ,1\right) $. Then, in the case $\sigma> p-s_pp$, we have
	\begin{align*}
		u\in W^{1,p}_\mathrm{loc}(\Omega)
	\end{align*}
	and there exists a constant $C=C(N, p, s_1, s_p, \sigma)$ such that, for any ball $B_R\equiv B_R(x_0)\subset\subset\Omega$ with $R\in(0,1)$ and any $r\in (0,R)$,
	\begin{align*}
		\|\nabla u\|^p_{L^p(B_r)}\le \frac{C\left( \mathcal{T}+[u]_{W^{\sigma,p}(B_R)}+1\right)^p}{(R-r)^{N+2p+2}}.
	\end{align*}
\end{lemma}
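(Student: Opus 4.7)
The plan is to follow essentially the same initial route as Lemma \ref{lem15}, and diverge only in the final step by exploiting that the condition $\sigma > p - s_p p$ pushes the $|h|$-exponent in the double-difference estimate strictly above $p$, which is precisely the threshold for a Lipschitz-type transfer leading to weak differentiability.

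Since $s_p > \tfrac{p-1}{p}$ yields $\min\{1, \tfrac{s_p p}{p-1}\} = 1$, our range of $\sigma$ is admissible in Proposition \ref{pro9}. Repeating the derivation of the key inequality in the proof of Lemma \ref{lem10} with $q = p$ therefore produces
\begin{align*}
\int_{B_r} |\tau_h(\tau_h u)|^p \, dx \le \frac{C(\mathcal{T} + [u]_{W^{\sigma,p}(B_R)} + 1)^p}{(R - r)^{N + 2p + 2}} |h|^{s_p p + \sigma}
\end{align*}
for every $h \in B_d \setminus \{0\}$ with $d = \tfrac{1}{7}(R - r)$. The crucial observation is that $\gamma := \tfrac{s_p p + \sigma}{p} > 1$, with $\gamma - 1$ bounded below by a positive constant depending only on $p, s_p, \sigma$.

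Next, I would apply Lemma \ref{lem6} in its $\gamma > 1$ branch with $q = p$, taking $M^p$ equal to the coefficient above. This produces
\begin{align*}
\int_{B_r} |\tau_h u|^p \, dx \le C |h|^p \left[ \left(\frac{M}{\gamma - 1}\right)^p d^{(\gamma - 1)p} + \frac{1}{d^p} \int_{B_R} |u|^p \, dx \right]
\end{align*}
for $h$ in a suitably smaller ball around the origin. Since $d \le R - r < 1$ and $(\gamma - 1)p > 0$, the factor $d^{(\gamma - 1)p}$ is bounded by $1$; using $\|u\|_{L^\infty(B_R)} \le \mathcal{T}$ together with $d \sim R - r$ to handle the second bracket, both contributions collapse into a single bound of the form $\widetilde{M}^p |h|^p$ with
\begin{align*}
\widetilde{M}^p \le \frac{C(\mathcal{T} + [u]_{W^{\sigma,p}(B_R)} + 1)^p}{(R - r)^{N + 2p + 2}}.
\end{align*}

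Finally, Lemma \ref{lem14} with $q = p$ converts this Lipschitz-type estimate into the weak differentiability of $u$ on $B_r$ together with $\|\nabla u\|_{L^p(B_r)}^p \le C \widetilde{M}^p$, which is the asserted bound. The main obstacle is essentially administrative: verifying that the intermediate radii needed when invoking Lemmas \ref{lem6} and \ref{lem14} chain consistently between $r$ and $R$ with all step-size conditions respected, and checking that the constant $1/(\gamma - 1)^p$, the reduction $d^{(\gamma - 1)p} \le 1$, and the passage from the $L^p$-term to $\mathcal{T}$ jointly preserve the exponent $N + 2p + 2$ of $R - r$ in the final bound.
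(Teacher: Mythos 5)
Your proposal follows essentially the same route as the paper's proof: derive the double-difference estimate $\int_{B_r}|\tau_h(\tau_h u)|^p\,dx \le M^p|h|^{s_p p+\sigma}$ by re-running the argument of Lemma \ref{lem10} with $q=p$ (legitimate here because $s_p>\tfrac{p-1}{p}$ gives $\min\{1,\tfrac{s_p p}{p-1}\}=1$, so the $\sigma$-range of Proposition \ref{pro9} is satisfied), invoke the $\gamma>1$ branch of Lemma \ref{lem6}, absorb the harmless factor $d^{(\gamma-1)p}\le 1$ and the $L^p$-term into the same polynomial power of $(R-r)$, and conclude with Lemma \ref{lem14}. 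This matches the paper's proof step for step.
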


\begin{proof}
	We can easily note that \eqref{15.1} also holds in the case $\sigma>p-s_pp$ for any $h\in B_d\backslash\left\lbrace 0\right\rbrace $ with $d=\frac{1}{7}(R-r)$. Since $s_p+\frac{\sigma}{p}>1$, applying Lemma \ref{lem6} we have
	\begin{align*}
		\int_{B_r}|\tau_hu|^p\,dx&\le C|h|^{p}\left[ \frac{\left( \mathcal{T}+[u]_{W^{\sigma,p}(B_R)}+1\right)^p(R-r)^{\left( s_p+\frac{\sigma}{p}-1\right) p}}{\left( s_p+\frac{\sigma}{p}-1\right) p}+\frac{\int_{B_R}|u|^p\,dx}{(R-r)^p}\right] \\
		&\le C|h|^{p}\left[ \frac{\left( \mathcal{T}+[u]_{W^{\sigma,p}(B_R)}+1\right)^p(R-r)^{\left( s_p+\frac{\sigma}{p}-1\right) p}}{\left( s_p+\frac{\sigma}{p}-1\right) p}+\frac{R^N\|u\|^p_{L^\infty(B_R)}}{(R-r)^p}\right] \\
		&\le \frac{C\left( \mathcal{T}+[u]_{W^{\sigma,p}(B_R)}+1\right)^p}{(R-r)^{N+2p+2}}|h|^p
	\end{align*} 
	for any $h\in B_{\frac{1}{2}d}\backslash\left\lbrace 0\right\rbrace $.
	
	Therefore, replacing $d$ by $R-r$ in Lemma \ref{lem14}, we conclude the claim
	\begin{align*}
		\int_{B_r}|\nabla u|^p\,dx\le \frac{C\left( \mathcal{T}+[u]_{W^{\sigma,p}(B_R)}+1\right)^p}{(R-r)^{N+2p+2}}.
	\end{align*}
\end{proof}

In the case $s_p\in \left( \frac{p-1}{p},\frac{p}{p+1}\right] $, we iterate the estimates obtained in Lemma \ref{lem15} to increase the power of the step size $|h|$.
\begin{lemma}
	\label{lem16}
	Let $p\ge2$, $s_p\in \left( \frac{p-1}{p},\frac{p}{p+1}\right] $ and let $u$ be a locally bounded weak solution to problem \eqref{1.1} in the sense of Definition \ref{def1}. Then, we have
	\begin{align*}
		u\in W^{\alpha,p}_\mathrm{loc}(\Omega)
	\end{align*}
	for some $\alpha\in (0,1)$ that satisfies $\alpha>p-s_pp$. Moreover, there exist two constants $C$ and $\kappa$ depending on $N, p, s_1, s_p, \alpha$, such that for any ball $B_R\equiv B_R(x_0)\subset\subset\Omega$ with $R\in(0,1)$ and any $r\in(0,R)$,
	\begin{align*}
		[u]^p_{W^{\alpha,p}(B_r)}\le \frac{C\left( \mathcal{T}+[u]_{W^{s_p,p}(B_R)}+1\right)^p}{(R-r)^{\kappa}}.
	\end{align*}
\end{lemma}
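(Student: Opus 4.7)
The plan is to iterate Lemma~\ref{lem15} starting from the baseline regularity $u \in W^{s_p, p}_{\mathrm{loc}}(\Omega)$ provided by Definition~\ref{def1}. The gain in differentiability produced by a single application of Lemma~\ref{lem15} is governed by the affine map $\Psi(\sigma) := \sigma/p + s_p$, whose unique fixed point is
\begin{equation*}
\sigma_* := \frac{s_p p}{p-1}.
\end{equation*}
A direct computation gives $\sigma_* - (p - s_p p) = \frac{p(s_p p - (p-1))}{p-1}$, which is strictly positive \emph{precisely} under the standing hypothesis $s_p > \frac{p-1}{p}$. Hence the target threshold $p - s_p p$ lies strictly below $\sigma_*$, so any monotone iteration of $\Psi$ starting from $\sigma_0 := s_p$ must cross $p - s_p p$ after finitely many steps.

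To set up the iteration, fix $\delta := \tfrac{1}{4}(\sigma_* - (p - s_p p)) > 0$, choose $\eta_i := \delta\, 2^{-(i+1)}$ so that $\sum_i \eta_i = \delta/2$, and define
\begin{equation*}
\sigma_{i+1} := \Psi(\sigma_i) - \eta_i, \qquad \rho_i := r + 2^{-i}(R - r).
\end{equation*}
A straightforward induction shows $\sigma_i \ge \tilde{\sigma}_i - \sum_{j < i}\eta_j$, where $\tilde{\sigma}_i := \Psi^i(s_p)$ increases monotonically to $\sigma_*$; consequently there is a least index $i_0$ with $\sigma_{i_0} > p - s_p p$. For every $i \le i_0$ the value $\sigma_{i-1}$ lies in the admissible range of Lemma~\ref{lem15}: the bound $\sigma_{i-1} \le p - s_p p < 1$ holds by minimality of $i_0$, while $\sigma_{i-1} \ge s_p > \max\{\mu^{s_p}_p, 0\}$ follows from $\mu^{s_p}_p < s_p \iff s_p < 1$.

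Applying Lemma~\ref{lem15} at each step to transfer regularity from $B_{\rho_{i-1}}$ to $B_{\rho_i}$ yields
\begin{equation*}
[u]^p_{W^{\sigma_i, p}(B_{\rho_i})} \le \frac{C_i \bigl(\mathcal{T}_{i-1} + [u]_{W^{\sigma_{i-1}, p}(B_{\rho_{i-1}})} + 1\bigr)^p}{(\rho_{i-1} - \rho_i)^{N+2p+2}},
\end{equation*}
where $\mathcal{T}_{i-1}$ is the tail quantity at level $\rho_{i-1}$ and each $C_i$ depends only on $N, p, s_1, s_p$ and the positive slack $\eta_{i-1}$. Using Lemma~\ref{lem4} to absorb $\mathcal{T}_{i-1}$ into $C\, 2^{iN/(p-1)}\mathcal{T}$ and $\rho_{i-1} - \rho_i = 2^{-i}(R-r)$, composing the $i_0$ resulting estimates exactly in the manner of Lemma~\ref{lem11} delivers the claimed bound with $\alpha := \sigma_{i_0}$ and $\kappa := (N+2p+2)\, i_0$.

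The main obstacle is the finite-termination argument, which reduces to the sharp algebraic comparison $\sigma_* > p - s_p p \iff s_p > \frac{p-1}{p}$; once this is in hand, the bookkeeping of constants through the composition is routine and parallels Lemma~\ref{lem11}. The upper restriction $s_p \le \frac{p}{p+1}$ in the hypothesis delineates precisely the range where $s_p \le p - s_p p$, so that the baseline regularity does \emph{not} already satisfy the conclusion and iteration is genuinely needed; for $s_p > \frac{p}{p+1}$ the statement is trivial with $\alpha = s_p$, which is why that range is handled instead by Lemma~\ref{lem15.2}.
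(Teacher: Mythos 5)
Your proof is correct and follows the same iterative strategy as the paper: repeated application of Lemma~\ref{lem15} to push the fractional differentiability index toward the fixed point $\sigma_* = \frac{s_p p}{p-1}$ of the gain map $\Psi(\sigma) = \sigma/p + s_p$, coupled with the observation that $\sigma_* > p - s_p p$ precisely when $s_p > \frac{p-1}{p}$. The bookkeeping of the sequence differs: the paper fixes a target $\alpha \in (p - s_p p, 1)$ a priori and iterates $\sigma_{i+1} = \frac{\sigma_i}{p} + \frac{p-1}{p}\cdot\frac{1+\alpha}{2}$, converging to $\frac{1+\alpha}{2}$, whereas you iterate $\sigma_{i+1} = \Psi(\sigma_i) - \eta_i$ with summable slacks $\eta_i$, converging to just below $\sigma_*$. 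Your choice of $i_0$ as the first index with $\sigma_{i_0} > p - s_p p$ is arguably cleaner, since it makes the applicability constraint $\sigma_{i-1} \le p - s_p p$ of Lemma~\ref{lem15} hold automatically by minimality at every step of the iteration, whereas the paper's sequence targeting $\tfrac{1+\alpha}{2}$ could in principle cross $p - s_p p$ before reaching $\alpha$. One small gap in exposition: you assert $\sigma_{i-1} \ge s_p$ to satisfy the lower bound $\sigma_{i-1} > \max\{\mu^{s_p}_p, 0\}$, but you do not explicitly verify that the sequence is increasing; this follows from $\Psi(\sigma_i) - \sigma_i = \frac{p-1}{p}(\sigma_* - \sigma_i) \ge \frac{p-1}{p} \cdot 4\delta \ge 2\delta > \eta_i$ as long as $\sigma_i \le p - s_p p$, which is worth spelling out.
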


\begin{proof}
	Similar to the proof of Lemma \ref{lem11}, we set
	\begin{align*}
		\sigma_0:=s_p,\quad\sigma_{i+1}:=\frac{\sigma_i}{p}+\frac{p-1}{p}\tilde{\alpha}
	\end{align*}
	with $\tilde{\alpha}=\frac{1+\alpha}{2}$ for $i\in\mathbb{N}$ and
	\begin{align*}
		\mathcal{T}_i:=\|u\|_{L^\infty(B_{\rho_i})}+\mathrm{Tail}(u;x_0,\rho_i).
	\end{align*}
	Thus, we have $\sigma_i\rightarrow\tilde{\alpha}$ as $i\rightarrow+\infty$. Moreover, we define two sequences$\left\lbrace \rho_i\right\rbrace $, $\left\lbrace \beta_i\right\rbrace $ as
	\begin{align*}
		\rho_i:=r+\frac{1}{2^i}(R-r),\quad\beta_i:=\frac{\sigma_{i-1}}{p}+s_p.
	\end{align*}
	By applying Lemma \ref{lem15} with $(\alpha,\sigma,\beta,r,R)$ being replaced by $(\sigma_{i-1},\sigma_i,\beta_i,\rho_i,\rho_{i-1})$, we obtain
	\begin{align}
		\label{16.1}
		[u]^p_{W^{\sigma,p}(B_{\rho_i})}\le \frac{C_i\left( \mathcal{T}_i+[u]_{W^{s_p,p}(B_R)}+1\right)^p}{(\rho_{i-1}-\rho_i)^{N+2p+2}}.
	\end{align}
	This application is valid provided that $\sigma_i<1$, $\beta_i\le1$ and $[u]_{W^{\sigma_{i-1},p}(B_{\rho_{i-1}})}<+\infty$.
	
	For the terms in the right-hand side of \eqref{16.1}, we utilize estimate \eqref{11.1} and \eqref{11.2} to deduce
	\begin{align}
		\label{16.2}
		[u]^p_{W^{\sigma_i,p}(B_{\rho_i})}\le \frac{C_i2^{iNp+i(N+2p+2)}\left( \mathcal{T}+[u]_{W^{\sigma_{i_1},p}(B_{\rho_{i-1}})}+1\right)^p}{(R-r)^{N+2p+2}}.
	\end{align}
	Iterating the inequality \eqref{16.2}, we have
	\begin{align}
		\label{16.3}
		[u]^p_{W^{\sigma_i,p}(B_{\rho_i})}\le \frac{(2i+1)C^i_*2^{(Np+N+2p+2)\left( \sum\limits_{j=1}^{i}j\right) }\left( \mathcal{T}+[u]_{W^{s_p,p}(B_{R})}+1\right)^p}{(R-r)^{(N+2p+2)i}}
	\end{align}
	for any $i\in\mathbb{N}$ satisfying $\sigma_i<1$, where $C_i\le C_*$ with $C_*$ is independent of the parameter $i$.
	
	We now denote by $i_0\in\mathbb{N}$ a fixed integer such that $\sigma_{i_0}\ge\alpha$, i.e.,
	\begin{align*}
		i_0:=\left[ \frac{\ln\frac{\tilde{\alpha}-s_p}{\tilde{\alpha}-\alpha}}{\ln p}\right]+1 .
	\end{align*}
	For the norm $[u]_{W^{\alpha,p}(B_r)}$, \eqref{16.3} implies
	\begin{align*}
		[u]^p_{W^{\alpha,p}(B_r)}&\le 2^p[u]^{p}_{W^{\sigma_0,p}(B_{\rho_{i_0}})}\\
		&\le \frac{(2i_0+1)C^{i_0}_*2^{(Np+N+2p+2)\left( \sum\limits_{j=1}^{i_0}j\right) }\left( \mathcal{T}+[u]_{W^{s_p,p}(B_{R})}+1\right)^p}{(R-r)^{(N+2p+2)i_0}}.
	\end{align*}
	By letting $C=C_*^{i_0}(2i_0+1)2^{(Np+N+2p+2)\left( \sum\limits_{j=1}^{i_0}j\right) }$ and $\kappa=(N+2p+2)i_0$, we complete this proof.
\end{proof}

\begin{proposition}
	\label{pro17}
	Let $p\ge2$, $s_p\in\left( \frac{p-1}{p},1\right) $ and let $u$ be a locally bounded weak solution to problem \eqref{1.1} in the sense of Definition \ref{def1}. Then, we have
	\begin{align*}
		u\in W^{1,p}_\mathrm{loc}(\Omega).
	\end{align*}
	Moreover, there exist two constants $C$ and $\kappa$ depending on $N, p, s_1, s_p$, such that for any ball $B_R\equiv B_R(x_0)\subset\subset\Omega$ with $R\in(0,1)$ and any $r\in (0,R)$,
	\begin{align*}
		\|\nabla u\|^p_{L^p(B_r)}\le \frac{C\left( \mathcal{T}+[u]_{W^{s_p,p}(B_{R})}+1\right)^p}{(R-r)^\kappa}.
	\end{align*}
\end{proposition}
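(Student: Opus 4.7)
The plan is to combine Lemma \ref{lem15.2} with Lemma \ref{lem16} by a case distinction on the size of $s_p$, chaining the two estimates on intermediate concentric balls. Lemma \ref{lem15.2} is the ``terminal'' tool that delivers a gradient bound directly, but only when the starting fractional regularity index $\sigma$ exceeds the threshold $p-s_pp$. The role of Lemma \ref{lem16} is to promote the trivial starting regularity $u\in W^{s_p,p}_{\mathrm{loc}}(\Omega)$ to some $u\in W^{\alpha,p}_{\mathrm{loc}}(\Omega)$ with $\alpha>p-s_pp$ in the regime where the trivial index $s_p$ itself falls below that threshold.

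\textbf{Case 1: $s_p\in\bigl(\tfrac{p}{p+1},1\bigr)$.} A direct computation shows that $s_p>p-s_pp$ is equivalent to $s_p>\tfrac{p}{p+1}$, so in this range the trivial regularity $u\in W^{s_p,p}_{\mathrm{loc}}(\Omega)$ already meets the threshold of Lemma \ref{lem15.2}. One must also verify that $s_p\in\bigl(\max\{\mu^{s_p}_p,0\},1\bigr)$; positivity is obvious, and $s_p>\mu^{s_p}_p=\tfrac{s_pp-2}{p-2}$ (for $p>2$) is equivalent to $s_p<1$, which holds. Applying Lemma \ref{lem15.2} with $\sigma=s_p$ on the pair $B_r\subset B_R$ immediately yields the claimed bound.

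\textbf{Case 2: $s_p\in\bigl(\tfrac{p-1}{p},\tfrac{p}{p+1}\bigr]$.} Here $s_p\le p-s_pp$, so Lemma \ref{lem15.2} is not directly available. I first use Lemma \ref{lem16}, applied on the pair $B_{(R+r)/2}\subset B_R$, to produce $u\in W^{\alpha,p}_{\mathrm{loc}}(\Omega)$ for some $\alpha\in(p-s_pp,1)$, together with the quantitative bound
\begin{align*}
[u]^p_{W^{\alpha,p}(B_{(R+r)/2})}\le \frac{C\bigl(\mathcal{T}+[u]_{W^{s_p,p}(B_R)}+1\bigr)^p}{(R-r)^{\kappa_1}}.
\end{align*}
A short algebraic check (using $s_p\le\tfrac{p}{p+1}$) shows $p-s_pp>\mu^{s_p}_p$, so the new index $\alpha$ automatically satisfies $\alpha\in\bigl(\max\{\mu^{s_p}_p,0\},1\bigr)$ and $\alpha>p-s_pp$, meeting the hypotheses of Lemma \ref{lem15.2}. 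Applying that lemma on the pair $B_r\subset B_{(R+r)/2}$ with $\sigma=\alpha$ gives
\begin{align*}
\|\nabla u\|^p_{L^p(B_r)}\le \frac{C\bigl(\mathcal{T}_{(R+r)/2}+[u]_{W^{\alpha,p}(B_{(R+r)/2})}+1\bigr)^p}{(R-r)^{N+2p+2}}.
\end{align*}
Controlling $\mathcal{T}_{(R+r)/2}$ by $\mathcal{T}$ through Lemma \ref{lem4} and inserting the previous display produces the desired bound with updated constants $C$ and $\kappa$.

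I do not expect any genuine obstacle; the proof is almost entirely bookkeeping. The only subtle step is the verification that the regularity index $\alpha$ produced by Lemma \ref{lem16} is simultaneously above $p-s_pp$ and above $\mu^{s_p}_p$, which amounts to the elementary inequality $p-s_pp>\mu^{s_p}_p$ in the restricted range $s_p\le\tfrac{p}{p+1}$; everything else is chaining the two quantitative estimates with a standard dyadic splitting of the annulus $B_R\setminus B_r$.
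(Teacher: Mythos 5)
Your proposal is correct and follows essentially the same two-case argument as the paper: for $s_p\in(\tfrac{p}{p+1},1)$ apply Lemma \ref{lem15.2} directly with $\sigma=s_p$, and for $s_p\in(\tfrac{p-1}{p},\tfrac{p}{p+1}]$ first use Lemma \ref{lem16} on an intermediate ball $B_{(R+r)/2}$ to reach $W^{\alpha,p}$ with $\alpha>p-s_pp$ and then invoke Lemma \ref{lem15.2}. Your explicit verification that $p-s_pp>\mu^{s_p}_p$ in the second regime (so the promoted index $\alpha$ automatically satisfies the lower-bound hypothesis of Lemma \ref{lem15.2}) is a useful detail that the paper glosses over.
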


\begin{proof}
	We first consider the case $s_p\in \left( \frac{p}{p+1},1\right) $. In this case, note that $s_p>p-s_p$, we apply Lemma \ref{lem15.2} directly with $\sigma=s_p$ to obtain
	\begin{align*}
		\|\nabla u\|^p_{L^p(B_r)}\le \frac{C\left( \mathcal{T}+[u]_{W^{s_p,p}(B_{R})}+1\right)^p}{(R-r)^{N+2p+2}},
	\end{align*}
	where we set $\kappa=N+2p+2$.
	
	Next, we consider the case $s_p\in \left( \frac{p-1}{p},\frac{p}{p+1}\right] $. In this case, we apply Lemma \ref{lem16} with $r$ replaced by $\tilde{r}:=r+\frac{1}{2}(R-r)$ to have
	\begin{align}
		\label{17.1}
		[u]^p_{W^{\alpha,p}(B_{\tilde{r}})}\le \frac{C\left( \mathcal{T}+[u]_{W^{s_p,p}(B_{R})}+1\right)^p}{(R-r)^{\tilde{\kappa}}}
	\end{align}
	for some $\alpha>p-s_pp$ with $C,\kappa$ depending on $N,p,s_1,s_p,\alpha$. Then, we apply Lemma \ref{lem15.2} to conclude
	\begin{align*}
		\|\nabla u\|^p_{L^p(B_r)}&\le \frac{C\left( \mathcal{T}+[u]_{W^{s_p,p}(B_{\tilde{r}})}+1\right)^p}{(R-r)^{N+2p+2}}\\
		&\le \frac{C\left( \mathcal{T}+\frac{\mathcal{T}+[u]_{W^{s_p,p}(B_{R})}+1}{(R-r)^{\frac{\tilde{\kappa}}{p}}}+1\right)^p}{(R-r)^{N+2p+2}}\\
		&\le \frac{C\left( \mathcal{T}+[u]_{W^{s_p,p}(B_{R})}+1\right)^p}{(R-r)^{N+2p+2+\tilde{\kappa}}}.
	\end{align*}
We complete this proof by setting
	\begin{align*}
		\kappa=
		\begin{cases}
			N+2p+2+\tilde{\kappa}&\text{ if }s_p\in \left( \frac{p-1}{p},\frac{p}{p+1}\right],\\[2mm]
			 N+2p+2&\text{ if }s_p\in \left( \frac{p}{p+1},1\right).
		\end{cases}
	\end{align*}
\end{proof}

Given the established result  $\nabla u\in \left( L^p_\mathrm{loc}(\Omega)\right) ^N$, the Sobolev embedding  $W^{1, p}\hookrightarrow W^{s_p, p}$ directly yields the following corollary.

\begin{corollary}
	\label{pro19}
	Let $p\ge2$, $s_1,s_p\in(0,1)$ and $q\ge p$. There exists a constant $C=C(N, p, q, s_1, s_p)$ such that for any $u\in W^{1,q}_\mathrm{loc}(\Omega)$ being a locally bounded weak solution to problem \eqref{1.1} in the sense of Definition \ref{def1}, we have, for any ball $B_R\equiv B_R(x_0)\subset\subset\Omega$ with $R\in(0,1)$, $r\in(0,R)$, $d\in \left( 0,\frac{1}{4}(R-r)\right] $ and $h\in B_d\backslash\left\lbrace 0\right\rbrace $,
	\begin{align*}
		&\quad\int_{B_r}\int_{B_r}\frac{\left| J_{\frac{q}{p}+1}(\tau_hu)(x)-J_{\frac{q}{p}+1}(\tau_hu)(y) \right|^p }{|x-y|^{N+s_pp}}\,dxdy\\
		&\le C\frac{R^{1-s_1}}{R-r}\int_{B_R}|\tau_hu|^{q-p+1}\,dx+C\frac{1}{s_1(R-r)^{s_1}}\int_{B_R}|\tau_hu|^{q-p+1}\,dx\\
		&\quad+C\frac{R^{(N-\varepsilon)\frac{q-p+2}{q}}}{(R-r)^2}\|\nabla u\|^{p-2}_{L^q(B_{R+d})}\left( \int_{B_R}|\tau_hu|^{q}\,dx\right)^{\frac{q-p+2}{q}}\\
		&\quad+C\frac{\mathcal{T}^{p-2}}{(R-r)^{N+s_pp}}\int_{B_R}|\tau_hu|^{q-p+2}\,dx+C\frac{\mathcal{T}^{p-1}|h|}{(R-r)^{N+s_pp+1}}\int_{B_R}|\tau_hu|^{q-p+1}\,dx,
	\end{align*}
	where
	\begin{align*}
		\mathcal{T}:=\|u\|_{L^\infty(B_{R+d}(x_0))}+\mathrm{Tail}(u;x_0,R+d).
	\end{align*}
\end{corollary}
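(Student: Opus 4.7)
The plan is to mirror the proof of Proposition \ref{pro9} essentially line by line, since the only substantive change is that the hypothesis $u\in W^{\sigma,q}_{\mathrm{loc}}(\Omega)$ has been strengthened to $u\in W^{1,q}_{\mathrm{loc}}(\Omega)$. Concretely, I would begin by testing the difference-quotient form of the weak formulation with $\varphi=J_{q-p+2}(\tau_h u)\eta^p$, where $\eta\in C_0^1(B_{(R+r)/2})$ is a standard cutoff satisfying $\eta\equiv 1$ on $B_r$ and $|\nabla\eta|\le C/(R-r)$. This produces the same decomposition $0=I+P$ as in \eqref{9.3}, and the estimates on the 1-growth term (monotonicity giving $I_{1,1}\ge 0$, together with the bounds \eqref{9.5} and \eqref{9.6} on $I_{1,2}$ and $I_2$) carry over without change to generate the first two terms in the claimed inequality. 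Similarly, the lower bound for $P_1$ from Lemma \ref{lem3}, and the nonlocal tail estimate for $P_2$ via Lemma \ref{lem8}, yield the last two terms involving $\mathcal{T}^{p-2}$ and $\mathcal{T}^{p-1}|h|$.

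The only modification needed is in the bound on $P_{1,2}$. In the original proof, the factor $(|u_h(x)-u_h(y)|+|u(x)-u(y)|)^{p-2}$ was controlled, after H\"older's inequality with exponents $q/(p-2)$ and $q/(q-p+2)$, by $[u]^{p-2}_{W^{\sigma,q}(B_{R+d})}$. Here, since $u\in W^{1,q}_{\mathrm{loc}}(\Omega)$, I would apply Lemma \ref{lem18} (the embedding $W^{1,q}\hookrightarrow W^{\sigma,q}$) for some fixed admissible $\sigma\in(\max\{\mu^{s_p}_p,0\},\min\{1,s_pp/(p-1)\})$, which gives
\begin{align*}
[u]^q_{W^{\sigma,q}(B_{R+d})}\le C(N)\frac{(R+d)^{(1-\sigma)q}}{(1-\sigma)q}\,\|\nabla u\|^q_{L^q(B_{R+d})}.
\end{align*}
Since $R<1$, the factor $(R+d)^{(1-\sigma)q}$ is bounded by a dimensional constant, and $(1-\sigma)^{-1}$ is absorbed into $C=C(N,p,q,s_1,s_p)$ once $\sigma$ has been fixed. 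Substituting this into the estimate on $P_{1,2}$ replaces $[u]^{p-2}_{W^{\sigma,q}(B_{R+d})}$ by $\|\nabla u\|^{p-2}_{L^q(B_{R+d})}$, producing the third term on the right-hand side.

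The main obstacle, which is really just bookkeeping, is to verify that the exponent $\varepsilon=\bigl(N+s_pp-2-\tfrac{(N+\sigma q)(p-2)}{q}\bigr)\tfrac{q}{q-p+2}$ satisfies $\varepsilon<N$ for the chosen $\sigma$ so that the double integral $\iint |\tau_hu|^q|x-y|^{-\varepsilon}\,dxdy$ remains finite, and that all residual $(1-\sigma)$-type factors are harmless. Fixing, say, any $\sigma$ strictly in the admissible interval gives $\varepsilon<N$ since $s_p<1$, so the integral is controlled in the standard way by $\int_{B_R}|\tau_hu|^q\,dx$ times a power of $R$, matching the form $R^{(N-\varepsilon)(q-p+2)/q}$. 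Assembling the five estimates exactly as in the closing lines of the proof of Proposition \ref{pro9} then yields the stated inequality.
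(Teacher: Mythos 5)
Your argument is correct and rests on the same key step as the paper — using Lemma \ref{lem18} to convert the $W^{\sigma,q}$-seminorm into $\|\nabla u\|_{L^q}$ — but the paper takes the shorter route of invoking Proposition \ref{pro9} directly as a black box (applicable since $u\in W^{1,q}_{\mathrm{loc}}\subset W^{\sigma,q}_{\mathrm{loc}}$ by that very embedding) and then substituting the resulting inequality into its conclusion, rather than rederiving the entire Caccioppoli estimate. One small remark on your bookkeeping: the inequality $\varepsilon<N$ is exactly equivalent to $\sigma>\mu^{s_p}_p$, so for $p>2$ it is the admissible lower bound on $\sigma$, not merely $s_p<1$, that guarantees it; your conclusion is nonetheless unaffected since you fix $\sigma$ in the admissible interval.
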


\begin{proof}
	Since $u\in W^{1,q}_\mathrm{loc}(\Omega)$, one can apply Lemma \ref{lem18}, the embedding $W^{1,q}\hookrightarrow W^{\sigma,q}$ to obtain
	\begin{align}
		\label{19.1}
		[u]^q_{W^{\sigma,q}(B_{R+d})}\le C(N)\frac{R^{(1-\sigma)q}}{(1-\sigma)q}\int_{B_R}|\nabla u|^q\,dx.
	\end{align}
	By bringing \eqref{19.1} into the claim in Proposition \ref{pro9}, we get the desired result.
\end{proof}

By Corollary \ref{pro19}, one can expect the fractional Sobolev regularity of $\nabla u$ under the assumption that $u\in W^{1,q}_\mathrm{loc}(\Omega)$. Then Lemma \ref{lem24} follows immediately from Lemma \ref{lem23}.
\begin{lemma}
	\label{lem22}
	Let $q\ge2$, $s_p\in\left( \frac{p-1}{p},1\right) $ and let $u\in W^{1,q}_\mathrm{loc}(\Omega)$ be a locally bounded weak solution to problem \eqref{1.1} in the sense of Definition \ref{def1}, we have
	\begin{align*}
		u\in W^{1+\alpha,q}_\mathrm{loc}(\Omega)
	\end{align*}
	for any $\alpha\in \left( 0,\frac{s_pp-(p-1)}{q}\right) $. Moreover, there exists a constant $C=C(N, p, q, s_1, s_p, \alpha)$ such that for any ball $B_R\equiv B_R(x_0)\subset\subset\Omega$ with $R\in(0,1)$ and any $r\in(0,R)$,
	\begin{align*}
		[\nabla u]^{q}_{W^{\alpha,q}(B_r)}\le \frac{C\left( \mathcal{T}+\|u\|_{L^q(B_R)}+1\right)^q}{(R-r)^{N+2q+2}}.
	\end{align*}
\end{lemma}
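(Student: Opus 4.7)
The plan is to upgrade the estimate of Corollary \ref{pro19} into a second-order finite-difference control on $u$ and then invoke Lemma \ref{lem21} to read off fractional Sobolev regularity of $\nabla u$. Fix $B_{R+d}(x_0)\subset\subset\Omega$ and set $d:=\tfrac{1}{8}(R-r)$ together with an intermediate radius $\tilde r:=r+\tfrac{1}{2}(R-r)$. Applying Corollary \ref{pro19} on $B_{\tilde r}\subset B_{R+d}$ gives an upper bound on
\[
  \int_{B_{\tilde r}}\!\int_{B_{\tilde r}}\frac{\big|J_{q/p+1}(\tau_h u)(x)-J_{q/p+1}(\tau_h u)(y)\big|^p}{|x-y|^{N+s_p p}}\,dx\,dy.
\]
Because $u\in W^{1,q}_{\mathrm{loc}}(\Omega)$ is now available, Lemma \ref{lem20} yields $\|\tau_h u\|_{L^q(B_R)}\le C|h|\,\|\nabla u\|_{L^q(B_{R+d})}$, and Hölder's inequality propagates this scaling to the $L^{q-p+1}$ and $L^{q-p+2}$ norms that appear on the right-hand side of Corollary \ref{pro19}. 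Every term there is then of order at least $|h|^{q-p+1}$, so the whole right-hand side is dominated by $C|h|^{q-p+1}\cdot E$, where $E$ aggregates powers of $\mathcal{T}$, $\|\nabla u\|_{L^q(B_{R+d})}$ and negative powers of $R-r$.

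Next I would use Lemma \ref{lem5} with $w=J_{q/p+1}(\tau_h u)$, exponent $p$, and fractional index $s_p$ on a slightly shrunken ball $B_\rho\subset B_{\tilde r}$ to convert the Gagliardo seminorm bound into an $L^p$-estimate for $\tau_\lambda J_{q/p+1}(\tau_h u)$, together with a lower-order term involving $\|J_{q/p+1}(\tau_h u)\|_{L^p(B_{\tilde r})}^p=\int_{B_{\tilde r}}|\tau_h u|^q\,dx\le C|h|^q\|\nabla u\|_{L^q}^q$. Taking $\lambda=h$ and appealing to the elementary inequality $C(p,q)\,|\tau_h(J_{q/p+1}(\tau_h u))|\ge |\tau_h(\tau_h u)|^{q/p}$ already exploited in the proof of Lemma \ref{lem10}, the resulting estimate reads
\[
  \int_{B_\rho}|\tau_h(\tau_h u)|^q\,dx\le C\,|h|^{\,s_p p+q-(p-1)}\cdot E',
\]
for every $h\in B_{d/2}\setminus\{0\}$, with $E'$ again a power of $\mathcal{T}+\|u\|_{L^q(B_R)}+\|\nabla u\|_{L^q(B_{R+d})}+1$ divided by a power of $R-r$.

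Setting $\gamma:=\bigl(s_p p-(p-1)\bigr)/q$, the hypothesis $s_p>\tfrac{p-1}{p}$ ensures $\gamma>0$, and the previous bound is precisely of the form $\int|\tau_h(\tau_h u)|^q\le M^q|h|^{q(1+\gamma)}$ required by Lemma \ref{lem21}. Choosing radii so that $B_{R+6d}$ fits inside the ball where $u\in W^{1,q}$ is available, Lemma \ref{lem21} yields $\nabla u\in(W^{\alpha,q}(B_r))^N$ for every $\alpha\in(0,\gamma)$, together with a quantitative estimate whose constant is controlled in terms of $M$, $\|\nabla u\|_{L^q(B_{R+4d})}$, and inverse powers of $R-r$; absorbing these through the bounds from Step 1 gives the inequality stated in the lemma. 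The main obstacle is purely bookkeeping: chaining the shrinking radii $r<\rho<\tilde r<R$ used by Corollary \ref{pro19}, Lemma \ref{lem5}, and Lemma \ref{lem21}, and consolidating the various $(R-r)^{-\theta}$ factors so that the final power matches $N+2q+2$, while also handling the $\|\nabla u\|_{L^q}$ contribution (which one either keeps in $E'$ or further bounds via Proposition \ref{pro17}-type arguments already in the paper).
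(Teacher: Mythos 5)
Your proposal is correct and follows essentially the same route as the paper's proof: apply Corollary \ref{pro19}, feed in the first-order difference bound $\|\tau_h u\|_{L^q}\le C|h|\,\|\nabla u\|_{L^q}$ from Lemma \ref{lem20} to make the right-hand side $O(|h|^{q-p+1})$, convert the resulting Gagliardo bound on $J_{q/p+1}(\tau_h u)$ into an $L^p$ estimate for $\tau_\lambda J_{q/p+1}(\tau_h u)$ via Lemma \ref{lem5}, choose $\lambda=h$ and use $C|\tau_h(J_{q/p+1}(\tau_h u))|\ge|\tau_h(\tau_h u)|^{q/p}$ to get $\int|\tau_h(\tau_h u)|^q\le C E'|h|^{s_p p+q-p+1}=CE'|h|^{q(1+\gamma)}$ with $\gamma=\frac{s_p p-(p-1)}{q}$, and finish with Lemma \ref{lem21}. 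The only differences are cosmetic bookkeeping in the choice of intermediate radii ($d=\tfrac{1}{10}(R-r)$ and $\tilde R=\tfrac{9}{10}R+\tfrac{1}{10}r$ in the paper vs.\ $d=\tfrac{1}{8}(R-r)$ and $\tilde r=r+\tfrac12(R-r)$ in yours), and the paper keeps $\|\nabla u\|_{L^q}$ in the final constant rather than bounding it away as you suggest one might optionally do.
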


\begin{proof}
	We first apply Lemma \ref{lem20} with the parameter $R$ being replaced by $\tilde{R}:=\frac{9}{10}R+\frac{1}{10}r$ and $d:=\frac{1}{10}(R-r)$ to have
	\begin{align}
		\label{22.1}
		\|\tau_hu\|_{L^q(B_{\tilde{R}})}\le C(N)|h|\|\nabla u\|_{L^q(B_{R})}.
	\end{align}
	Moreover, the application of Corollary \ref{pro19} enables us to obtain
	\begin{align*}
		&\quad\int_{B_{r+5d}}\int_{B_{r+5d}}\frac{\left| J_{\frac{q}{p}+1}(\tau_hu)(x)-J_{\frac{q}{p}+1}(\tau_hu)(y)\right|^p  }{|x-y|^{N+s_pp}}\,dxdy\\
		&\le \frac{C}{R-r}\left( \int_{B_{\tilde{R}}}|\tau_hu|^{q}\,dx\right)^{\frac{q-p+1}{q}}+\frac{C}{s_1(R-r)^{s_1}}\left( \int_{B_{\tilde{R}}}|\tau_hu|^{q}\,dx\right)^{\frac{q-p+1}{q}}\\
		&\quad+\frac{C}{(R-r)^2}\|\nabla u\|^{p-2}_{L^q(B_{R})}\left( \int_{B_{\tilde{R}}}|\tau_hu|^{q}\,dx\right)^{\frac{q-p+2}{q}}\\
		&\quad+\frac{C\mathcal{T}^{p-2}}{(R-r)^{N+s_pp}}\left( \int_{B_{\tilde{R}}}|\tau_hu|^{q}\,dx\right)^{\frac{q-p+2}{q}}+\frac{C\mathcal{T}^{p-1}|h|}{(R-r)^{N+s_pp+1}}\left( \int_{B_{\tilde{R}}}|\tau_hu|^{q}\,dx\right)^{\frac{q-p+1}{q}},
	\end{align*}
	where H\"{o}lder's inequality is utilized to generate the term $\int_{B_{\tilde{R}}}|\tau_hu|^{q}\,dx$.
	
Substituting \eqref{22.1} into the above inequality, we deduce
	\begin{align}
		\label{22.2}
		&\quad\int_{B_{r+5d}}\int_{B_{r+5d}}\frac{\left| J_{\frac{q}{p}+1}(\tau_hu)(x)-J_{\frac{q}{p}+1}(\tau_hu)(y)\right|^p  }{|x-y|^{N+s_pp}}\,dxdy\nonumber\\
		&\le \frac{C}{R-r}\|\nabla u\|^{q-p+1}_{L^q(B_R)}|h|^{q-p+1}+\frac{C}{s_1(R-r)^{s_1}}\|\nabla u\|^{q-p+1}_{L^q(B_R)}|h|^{q-p+1}\nonumber\\
		&\quad+\frac{C}{(R-r)^2}\|\nabla u\|^{q}_{L^q(B_{R})}|h|^{q-p+2}+\frac{C\mathcal{T}^{p-2}}{(R-r)^{N+s_pp}}\|\nabla u\|^{q-p+2}_{L^q(B_R)}|h|^{q-p+2}\nonumber\\
		&\quad+\frac{C\mathcal{T}^{p-1}|h|}{(R-r)^{N+s_pp+1}}\|\nabla u\|^{q-p+1}_{L^q(B_R)}|h|^{q-p+2}\nonumber\\
		&\le \frac{C\left( \mathcal{T}+\|\nabla u\|_{L^q(B_R)}+1\right)^q }{(R-r)^{N+s_pp+1}}|h|^{q-p+1}.
	\end{align}
	For the left-hand side of \eqref{22.2}, Lemma \ref{lem5} implies
	\begin{align}
		\label{22.3}
		&\quad\int_{B_{r+4d}}\left| \tau_\lambda\left( J_{\frac{q}{p}+1}(\tau_hu)\right) \right| ^p\,dx\nonumber\\
		&\le C|\lambda|^{s_pp}\left[ \left[ J_{\frac{q}{p}+1}(\tau_hu)\right]^p_{W^{s_p,p}(B_{r+5d})}+\frac{\int_{B_{r+5d}}|\tau_hu|^q\,dx}{(R-r)^p} \right] \nonumber\\
		&\le C|\lambda|^{s_pp}\left[ \frac{\left( \mathcal{T}+\|\nabla u\|_{L^q(B_R)}+1\right)^q }{(R-r)^{N+s_pp+1}}|h|^{q-p+1}+\frac{\|\nabla u\|^q_{L^q(B_R)}}{(R-r)^{p}}|h|^q\right]\nonumber\\
		&\le \frac{C\left( \mathcal{T}+\|\nabla u\|_{L^q(B_R)}+1\right)^q|\lambda|^{s_pp}|h|^{q-p+1}}{(R-r)^{N+s_pp+1}}.
	\end{align}
	We choose $\lambda=h$ and note that
	\begin{align*}
		C(p,q)\left| \tau_\lambda\left( J_{\frac{q}{p}+1}(\tau_hu)\right) \right|\ge |\tau_h(\tau_hu)|^\frac{q}{p}.
	\end{align*}
	Therefore, for any $h\in B_d\backslash\left\lbrace 0\right\rbrace $, it holds that
	\begin{align*}
		\int_{B_{r+4d}}|\tau_h(\tau_hu)|^q\,dx\le \frac{C\left( \mathcal{T}+\|\nabla u\|_{L^q(B_R)}+1\right)^q|h|^{s_pp+q-p+1}}{(R-r)^{N+s_pp+1}}.
	\end{align*}
	
	As a result of application of Lemma \ref{lem21}, we conclude
	\begin{align*}
		\nabla u\in \left( W^{\alpha,q}(B_R)\right)^N 
	\end{align*}
	for any $\alpha\in \left( 0,\frac{s_pp-(p-1)}{q}\right)$ and there exists a constant $C=C(N, p, q, s_1, s_p, \alpha)$ such that
	\begin{align*}
		[\nabla u]^{q}_{W^{\alpha,q}(B_r)}&\le C\left[ \frac{\left( \mathcal{T}+\|\nabla u\|_{L^q(B_R)}+1\right)^q}{(R-r)^{N+s_pp+1}}+\frac{\|\nabla u\|^q_{L^q(B_R)}}{(R-r)^{s_pp+q-p+1}}\right] \\
		&\le \frac{C\left( \mathcal{T}+\|u\|_{L^q(B_R)}+1\right)^q}{(R-r)^{N+2q+2}}.
	\end{align*}
\end{proof}

\begin{lemma}
	\label{lem24}
	Let $q\ge2$, $s_p\in \left( \frac{p-1}{p},1\right) $, $q\ge p$ and let $u\in W^{1,q}_\mathrm{loc}(\Omega)$ be a locally bounded weak solution to problem \eqref{1.1} in the sense of Definition \ref{def1}. Then, we have
	\begin{align*}
		\nabla u\in \left( L^\frac{Nq}{N-\alpha q}_{\mathrm{loc}}(\Omega)\right)^N 
	\end{align*}
	for any $\alpha\in \left( 0,\frac{s_pp-(p-1)}{q}\right) $. Moreover, there exists a constant $C=C(N, p, q, s_1, s_p, \alpha)$ such that for any ball $B_R\equiv B_R(x_0)\subset\subset\Omega$ with $R\in(0,1)$ and any $r\in (0,R)$,
	\begin{align*}
		\left(\int_{B_r}|\nabla u|^\frac{Nq}{N-\alpha q}\,dx\right) ^{\frac{N-\alpha q}{N}}\le \frac{C\left( \mathcal{T}+\|\nabla u\|_{L^q(B_R)}+1\right)^q}{(R-r)^{N+2q+2}}.
	\end{align*}
\end{lemma}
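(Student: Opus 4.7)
The plan is to deduce Lemma \ref{lem24} directly from Lemma \ref{lem22} combined with the Sobolev embedding of Lemma \ref{lem23} applied componentwise to $\nabla u$. Fix $\alpha \in \bigl(0, (s_p p - (p-1))/q\bigr)$ and set $\tilde q := Nq/(N - \alpha q)$. Since $q \ge 2$ and $s_p < 1$, one has $\alpha q < s_p p - (p-1) < 1 \le N$, so both $\tilde q$ and the embedding are admissible.

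First, I would introduce the intermediate radius $\tilde R := (r + R)/2$ and invoke Lemma \ref{lem22} on the pair $(\tilde R, R)$. Because $u$ is locally bounded, $\|u\|_{L^q(B_R)} \le |B_R|^{1/q}\|u\|_{L^\infty(B_R)} \le C\mathcal{T}$, so the output of Lemma \ref{lem22} can be recorded as
\begin{equation*}
[\nabla u]^q_{W^{\alpha, q}(B_{\tilde R})} \le \frac{C\bigl(\mathcal{T} + \|\nabla u\|_{L^q(B_R)} + 1\bigr)^q}{(R - r)^{N + 2q + 2}}.
\end{equation*}

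Second, apply Lemma \ref{lem23} componentwise to $\nabla u$ on $B_{\tilde R}$. Converting the averaged integrals to absolute ones, the factor $\tilde R^\alpha$ on the right pairs up with $|B_{\tilde R}|^{1/\tilde q - 1/q} = c_N\tilde R^{-\alpha}$ to yield the scale-invariant estimate
\begin{equation*}
\bigl\|\nabla u - (\nabla u)_{B_{\tilde R}}\bigr\|_{L^{\tilde q}(B_{\tilde R})} \le C\,[\nabla u]_{W^{\alpha, q}(B_{\tilde R})},
\end{equation*}
and the triangle inequality together with Jensen's inequality $\bigl|(\nabla u)_{B_{\tilde R}}\bigr| \le |B_{\tilde R}|^{-1/q}\|\nabla u\|_{L^q(B_{\tilde R})}$ gives
\begin{equation*}
\|\nabla u\|_{L^{\tilde q}(B_r)} \le \|\nabla u\|_{L^{\tilde q}(B_{\tilde R})} \le C\,[\nabla u]_{W^{\alpha, q}(B_{\tilde R})} + C\,\tilde R^{-\alpha}\,\|\nabla u\|_{L^q(B_R)}.
\end{equation*}

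Finally, since $\tilde R \ge R/2$ and $R - r \le R \le 1$, one has $\tilde R^{-\alpha} \le 2^\alpha (R - r)^{-\alpha} \le 2^\alpha (R - r)^{-(N + 2q + 2)}$, so both terms on the right fit into the prescribed denominator. Raising to the $q$-th power and absorbing constants then yields the claimed estimate. The only subtlety is this last scaling step, which is precisely why I apply the embedding on the enlarged ball $B_{\tilde R}$ rather than on $B_r$ directly — this keeps $\tilde R^{-\alpha}$ controlled by a negative power of $R - r$ instead of the possibly much smaller radius $r$. All substantive analytic content is already contained in Lemma \ref{lem22}, so no genuine obstacle arises.
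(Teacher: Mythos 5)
Your proof is correct and follows the same route as the paper: apply Lemma \ref{lem22} to control $[\nabla u]_{W^{\alpha,q}}$ and then feed that into the fractional Sobolev embedding of Lemma \ref{lem23} componentwise. In fact you are more careful than the paper's own (very compressed) argument: the displayed chain in the paper simply equates the Sobolev embedding output with $\|\nabla u\|_{L^{\tilde q}(B_r)}$, whereas Lemma \ref{lem23} only controls $\|\nabla u - (\nabla u)_{B_{\tilde R}}\|_{L^{\tilde q}}$; your triangle-inequality plus Jensen step to absorb $|(\nabla u)_{B_{\tilde R}}|$ (at the scale-invariant cost $\tilde R^{-\alpha}\|\nabla u\|_{L^q}$), and your choice of the intermediate radius $\tilde R$ to keep that factor controlled by $(R-r)^{-\alpha}$, fills a genuine gap that the paper leaves implicit. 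The remaining bookkeeping — taking the $q$-th root of the Lemma \ref{lem22} bound before adding, then raising to the $q$-th power, and using $R-r<1$ and $\alpha<1\le(N+2q+2)/q$ to merge the two denominators — is exactly as needed to land on the stated exponent $(R-r)^{-(N+2q+2)}$. (You also correctly observe that the $\|u\|_{L^q(B_R)}$ appearing in the displayed conclusion of Lemma \ref{lem22} is a typographical slip for $\|\nabla u\|_{L^q(B_R)}$, as is evident from the intermediate line of its proof, and that even taken at face value it is dominated by $\mathcal{T}$.)
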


\begin{proof}
	For any $\alpha\in \left( 0,\frac{s_pp-(p-1)}{q}\right)$, the direct application of Lemma \ref{lem22} and Lemma \ref{lem23} enables us to obtain
	\begin{align*}
		\frac{1}{R^{N-\alpha q}}\left(\int_{B_r}|\nabla u|^\frac{Nq}{N-\alpha q}\,dx\right)^\frac{N-\alpha q}{N}&\le CR^{\alpha q-N}\int_{B_R}\int_{B_R}\frac{|\nabla u(x)-\nabla u(y)|^q}{|x-y|^{N+\alpha q}}\,dxdy\\ 
		&\le \frac{CR^{\alpha q-N}\left( \mathcal{T}+\|\nabla u\|_{L^q(B_R)}+1\right)^q}{(R-r)^{N+2q+2}}.
	\end{align*}
	That is,
	\begin{align*}
		\left(\int_{B_r}|\nabla u|^\frac{Nq}{N-\alpha q}\,dx\right) ^{\frac{N-\alpha q}{N}}\le \frac{C\left( \mathcal{T}+\|\nabla u\|_{L^q(B_R)}+1\right)^q}{(R-r)^{N+2q+2}}.
	\end{align*}
\end{proof}

\begin{lemma}
	\label{lem25}
	Let $p\ge2$, $s_p\in\left( \frac{p-1}{p}, 1\right) $ and let $u\in W^{1,p}_\mathrm{loc}(\Omega)$ be a locally bounded weak solution of problem \eqref{1.1} in the sense of Definition \ref{def1}, we have
	\begin{align*}
		u\in W^{1,q}_\mathrm{loc}(\Omega)
	\end{align*}
	for any $q\ge p$. Moreover, there exist $C$ and $\kappa$ depending on $N, p, q, s_1, s_p$ such that for any ball $B_R\equiv B_R(x_0)\subset\subset\Omega$ with $R\in(0, 1)$ and any $r\in(0,R)$,
	\begin{align*}
		\|\nabla u\|^q_{L^q(B_r)}\le \frac{C\left( \mathcal{T}+\|\nabla u\|_{L^p(B_R)}+1\right)^q}{(R-r)^\kappa}.
	\end{align*}
\end{lemma}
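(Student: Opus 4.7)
The strategy is to bootstrap the integrability of $\nabla u$ from $L^p$ up to $L^q$ by iterating Lemma \ref{lem24}. The key observation is that Lemma \ref{lem24} improves $\nabla u \in L^{q_i}_{\mathrm{loc}}$ to $\nabla u \in L^{q_{i+1}}_{\mathrm{loc}}$ with $q_{i+1} = \tfrac{N q_i}{N-\alpha_i q_i}$ for any admissible $\alpha_i \in \left(0,\tfrac{s_p p-(p-1)}{q_i}\right)$.

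First I would set up the exponent sequence. Choose $\alpha_i := \tfrac{s_p p-(p-1)}{2 q_i}$, so that $\alpha_i q_i = \theta := \tfrac{s_p p-(p-1)}{2}$ is a fixed constant lying in $(0,N)$. Then $q_{i+1} = \lambda q_i$ with $\lambda := \tfrac{N}{N-\theta} > 1$, and hence $q_i = \lambda^i p \to \infty$. Let $i_0$ be the smallest integer with $q_{i_0} \ge q$; note that $i_0$ depends only on $N,p,q,s_1,s_p$. Next I would introduce a decreasing sequence of radii $\rho_i := r + \tfrac{R-r}{2^i}$, so $\rho_0 = R$, $\rho_i \to r$, and $\rho_i - \rho_{i+1} = \tfrac{R-r}{2^{i+1}}$, and define $\mathcal{T}_i := \|u\|_{L^\infty(B_{\rho_i})} + \mathrm{Tail}(u;x_0,\rho_i)$, which by Lemma \ref{lem4} satisfies $\mathcal{T}_i^{p-1} \le C(N) 2^{iN} \mathcal{T}^{p-1}$.

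At the $i$-th step, assuming inductively that $u \in W^{1,q_i}_{\mathrm{loc}}(B_{\rho_i})$, I would apply Lemma \ref{lem24} on the pair $(B_{\rho_{i+1}}, B_{\rho_i})$ with exponent $q_i$ and parameter $\alpha_i$ to obtain
\[
  \|\nabla u\|_{L^{q_{i+1}}(B_{\rho_{i+1}})}^{q_{i+1}} \le \frac{C_i\, 2^{\nu_i i}\bigl(\mathcal{T} + \|\nabla u\|_{L^{q_i}(B_{\rho_i})} + 1\bigr)^{q_{i+1}}}{(R-r)^{(N+2q_i+2)q_{i+1}/q_i}},
\]
where the constant $C_i$ depends only on $N,p,s_1,s_p,q_i,\alpha_i$, i.e.\ ultimately on $N,p,q,s_1,s_p$ through $i$. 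Starting from the base case $u \in W^{1,p}_{\mathrm{loc}}(\Omega)$ with the quantitative bound already provided by Proposition \ref{pro17}, iterating this estimate $i_0$ times and unwinding the recursion yields
\[
  \|\nabla u\|_{L^{q_{i_0}}(B_r)}^{q_{i_0}} \le \frac{\tilde C\bigl(\mathcal{T} + \|\nabla u\|_{L^p(B_R)} + 1\bigr)^{q_{i_0}}}{(R-r)^{\tilde\kappa}},
\]
with $\tilde C$ and $\tilde\kappa$ depending only on $N,p,q,s_1,s_p$. Finally, since $q_{i_0} \ge q$ and $B_r$ has finite measure, H\"older's inequality
\[
  \|\nabla u\|_{L^q(B_r)} \le |B_r|^{1/q - 1/q_{i_0}}\|\nabla u\|_{L^{q_{i_0}}(B_r)}
\]
gives the desired bound after raising to the $q$-th power and relabeling constants.

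The main obstacle is the bookkeeping of the iteration: each step of Lemma \ref{lem24} produces a constant that depends on $\alpha_i = \tfrac{s_p p-(p-1)}{2q_i}$ and on $q_i$, both of which vary with $i$, so one must carefully verify that after $i_0$ iterations all accumulated constants $\tilde C$ and exponents $\tilde \kappa$ remain finite and depend only on $N,p,q,s_1,s_p$. Since $i_0$ itself depends only on these data, this is a routine but somewhat delicate computation, parallel in spirit to the iterative schemes carried out in Lemmas \ref{lem11} and \ref{lem16}.
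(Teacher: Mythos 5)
Your proposal follows essentially the same route as the paper: you iterate Lemma \ref{lem24} with the same choice $\alpha_i = \frac{s_p p - (p-1)}{2q_i}$, producing the geometric exponent sequence $q_i = \left(\tfrac{N}{N-\frac{1}{2}(s_p p - p + 1)}\right)^i p$, the same shrinking radii $\rho_i = r + 2^{-i}(R-r)$, the same tail estimate from Lemma \ref{lem4}, and the same Hölder step to pass from $q_{i_0}$ down to $q$. The only cosmetic difference is the minor remark about Proposition \ref{pro17}; the hypothesis $u\in W^{1,p}_{\mathrm{loc}}$ already supplies the base case, so that reference is not needed here.
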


\begin{proof}
	We prove this claim by an iteration argument. First, we define
	\begin{align*}
		q_i:=\left( \frac{N}{N-\frac{1}{2}\left( s_pp-p+1\right) }\right) ^ip
	\end{align*}
	and
	\begin{align*}
		\rho_i:=r+\frac{1}{2^i}(R-r)
	\end{align*}
	for $i\in\mathbb{N}$.
	
	For a fixed $i\in\mathbb{N}$, suppose that $\|\nabla u\|_{L^{q_{i-1}}(B_{\rho_{i-1}})}<+\infty$, we apply Lemma \ref{lem24} with parameter $(r,R,q,\alpha)$ being replaced by $(\rho_i,\rho_{i-1},q_{i-1},\frac{s_pp-p+1}{2q_{i-1}})$ to have
	\begin{align*}
		\left( \int_{B_{\rho_i}}|\nabla u|^{q_i}\,dx\right)^\frac{q_{i-1}}{q_i}&\le\frac{C_i\left( \mathcal{T}_{i-1}+\|\nabla u\|_{L^{q_{i-1}}(B_{\rho_{i-1}})}+1\right)^{q_{i-1}} }{(\rho_{i-1}-\rho_{i})^{N+2q_{i-1}+2}} \\
		&\le \frac{C_i\left( \mathcal{T}_{i-1}+\|\nabla u\|_{L^{q_{i-1}}(B_{\rho_{i-1}})}+1\right)^{q_{i-1}} }{(R-r)^{N+2q_{i-1}+2}}2^{i(N+2q+2)}\\
		&\le \frac{C_i2^{i(Nq_i+N+2q+2)}\left( \mathcal{T}+\|\nabla u\|_{L^{q_{i-1}}(B_{\rho_{i-1}})}+1\right)^{q_{i-1}} }{(R-r)^{N+2q_{i-1}+2}}\\
		&=\frac{\tilde{C}_i\left( \mathcal{T}+\|\nabla u\|_{L^{q_{i-1}}(B_{\rho_{i-1}})}+1\right)^{q_{i-1}} }{(R-r)^{N+2q_{i-1}+2}},
	\end{align*}
	where $\tilde{C}_i=C_i2^{i(Nq_i+N+2q+2)}$. Iterating the above inequality, one has
	\begin{align*}
		\left( \int_{B_{r}}|\nabla u|^{q_i}\,dx\right)^\frac{1}{q_i}\le  \frac{C\left( \mathcal{T}+\|\nabla u\|_{L^{q_{i-1}}(B_{\rho_{i-1}})}+1\right)}{(R-r)^\kappa},
	\end{align*}
	where we set $C=\prod\limits_{j=1}^i\tilde{C}_j$ and $\kappa=(N+2)i+2\sum\limits_{j=0}^{i-1}q_j$.
	
	Since $q_i\rightarrow +\infty$ as $i\rightarrow+\infty$, for any $q\ge p$, there exists $i_0\in\mathbb{N}$ such that $q_{i_0}\ge q$. More precisely, by choosing
	\begin{align*}
		i_0:=\left[ \frac{\ln\frac{q}{p}}{\ln\frac{N}{N-\frac{1}{2}(s_pp-p+1)}}\right]+1, 
	\end{align*}
	and using  H\"{o}lder's inequality to obtain
	\begin{align*}
		\left( \int_{B_r}|\nabla u|^q\,dx\right)^\frac{1}{q}\le \left( \int_{B_r}|\nabla u|^{q_{i_0}}\,dx\right)^\frac{1}{q_{i_0}}\le\frac{C\left( \mathcal{T}+\|\nabla u\|_{L^p(B_R)}+1\right)}{(R-r)^{\kappa_{i_0}}}
	\end{align*}
	with $C=\prod\limits_{j=1}^{i_0}\tilde{C}_j$.
\end{proof}

Having the previous lemmas in hand, it suffices to prove the almost Lipschitz regularity of weak solutions.

\begin{proof}[Proof of Theorem \ref{th26}]
	We set
	\begin{align*}
			\mathcal{T}_{\frac{R+r}{2}}:=\|u\|_{L^\infty(B_{\frac{R+r}{2}}(x_0))}+\mathrm{Tail}\left(u;x_0,\frac{R+r}{2}\right).
	\end{align*}
	By combining the following inequalities that come from Lemma \ref{lem25} and Proposition \ref{pro17} respectively,
	\begin{align*}
		\|\nabla u\|^q_{L^q(B_r)}\le \frac{C_1\left( \mathcal{T}_{\frac{R+r}{2}}+\|\nabla u\|_{L^p(B_{\frac{R+r}{2}})}+1\right)^q }{(R-r)^{\kappa_1}}
	\end{align*}
	and
	\begin{align*}
		\|\nabla u\|^p_{L^p(B_{\frac{R+r}{2}})}\le \frac{C_2\left( \mathcal{T}+[u]_{W^{s_p,p}(B_R)}+1\right)^p }{(R-r)^{\kappa_2}},
	\end{align*}
	one can conclude the claim of Theorem \ref{th26}, i.e.,
	\begin{align*}
		\|\nabla u\|^q_{L^q(B_r)}&\le \frac{C_1\left( \mathcal{T}_{\frac{R+r}{2}}+\frac{C_2^\frac{1}{p}\left( \mathcal{T}+[u]_{W^{s_p,p}(B_R)}+1\right) }{(R-r)^{\frac{\kappa_2}{p}}}+1\right)^q }{(R-r)^{\kappa_1}}\\
		&\le \frac{(2^Nq+3^q)C_1C_2^\frac{q}{p}\left( \mathcal{T}+[u]_{W^{s_p,p}(B_R)}+1\right)^q}{(R-r)^{\kappa_1+\frac{\kappa_2 q}{p}}}.
	\end{align*}
This completes the proof.
\end{proof}

A direct application of Lemma \ref{Morrey2} enables us to prove Corollary \ref{cor27} from Theorem \ref{th26}.

\begin{proof}[Proof of Corollary \ref{cor27}]
	From Theorem \ref{th26}, we know that the locally bounded weak solution $u\in W^{1,q}_\mathrm{loc}(\Omega)$ for any $q\ge p$ provide that $p\ge2$ and $s_p\in\left( \frac{p-1}{p},1\right) $. Thus, by Morrey's embedding Lemma \ref{Morrey2}, we have $u\in C^{0,\gamma}_\mathrm{loc}(\Omega)$ for any $\gamma\in (0,1)$.
	
	To be specific, for any fixed $\gamma\in(0,1)$, let us apply Lemma \ref{Morrey2} with parameter $q:=\frac{N}{1-\gamma}$ and Theorem \ref{th26} to conclude the following estimate
	\begin{align*}
		[u]_{C^{0,\gamma}(B_r)}= [u]_{C^{0,1-\frac{N}{q}}(B_r)}\le C\|\nabla u\|_{L^q(B_r)}\le \frac{C\left( \mathcal{T}+[u]_{W^{s_p,p}(B_R)}+1\right) }{(R-r)^{\kappa}}.
	\end{align*}
\end{proof}

\subsection*{Acknowledgments}
This work was supported by the National Natural Science Foundation of China (No. 12471128).

\subsection*{Conflict of interest} The authors declare that there is no conflict of interest. We also declare that this manuscript has no associated data.
	
\subsection*{Data availability}
No data was used for the research described in the article.

\end{document}